\numberwithin{equation}{section}
\definecolor{darkgreen}{rgb}{0,0.45,0} 
  \newtheorem{proposition}{Proposition}[section]
  \newtheorem{lemma}[proposition]{Lemma}
  \newtheorem{theorem}[proposition]{Theorem}
  \theoremstyle{definition}
  \newtheorem{definition}[proposition]{Definition}
  \newtheorem{example}[proposition]{Example}
  \newtheorem{claim}[proposition]{}
\theoremstyle{remark}
  \newtheorem{remark}[proposition]{Remark}
  \newcounter{c}
  \newcommand{\etyk}[1]{\vspace{-7.4mm}$$\begin{equation}\Label{#1}
  \addtocounter{c}{1}}
  \renewcommand{\]}{\ifnum \value{c}=1 $$\else \end{equation}\fi}
\newenvironment{amssidewaysfigure}
  {\begin{sidewaysfigure}\vspace*{.5\textwidth}\begin{minipage}{\textheight}\centering}
  {\end{minipage}\end{sidewaysfigure}}
\newcommand{\black}{\color{black}}
\newcommand{\Longdownarrow}[1]{\mbox{\rotatebox{270}{$\Longrightarrow$} \raisebox{-12pt}{$#1$}}}
\newcommand{\Longuparrow}[1]{\mbox{\rotatebox{90}{$\Longrightarrow$} \raisebox{8pt}{$#1$}}}
\newcommand{\BiHom}{$\mathsf{BiHom}$}
\newcommand{\lax}[1]{\raisebox{-7pt}{$\stackrel {\scalebox{1.6}{$\bullet$}} {\scalebox{.6}{$#1$}}$}}
\newcommand{\oplax}[1]{\raisebox{-7pt}{$\stackrel {\scalebox{1.6}{$\circ$}} {\scalebox{.6}{$#1$}}$}}
\newcommand{\Lax}{\raisebox{-2pt}{\scalebox{1.6}{$\bullet$}}}
\newcommand{\OpLax}{\raisebox{-2pt}{\scalebox{1.6}{$\circ$}}}
\newcommand{\lw}{\raisebox{-2pt}{\scalebox{.5}[1.6]
{$\Leftcircle$}}}
\newcommand{\rw}{\raisebox{-2pt}{\scalebox{.5}[1.6]{$\Rightcircle$}}}
\newcommand{\lb}{\raisebox{-2pt}{\scalebox{.55}[1.6]{$\LEFTCIRCLE$}}}
\newcommand{\rb}{\raisebox{-2pt}{\scalebox{.55}[1.6]{$\RIGHTCIRCLE$}}}
\newcommand\bluesout{\bgroup\markoverwith{\textcolor{blue}{\rule[0.5ex]{2pt}{0.4pt}}}\ULon}
\begin{document}

\title{BiHom Hopf algebras viewed as Hopf monoids}

\author{Gabriella B\"ohm} 
\address{Wigner Research Centre for Physics, H-1525 Budapest 114,
P.O.B.\ 49, Hungary}
\email{bohm.gabriella@wigner.mta.hu}

\author{Joost Vercruysse}
\address{D\`epartement de Math\`ematiques, Facult\`e des Sciences,
Universit\`e Libre de Bruxelles, Boulevard du Triomphe, B-1050 Bruxelles, Belgium}
\email{jvercruy@ulb.ac.be}

\date{March 2020}

 
\begin{abstract}
We introduce monoidal categories whose monoidal products of any positive number of factors are lax coherent and whose 
nullary products are oplax coherent. We call them $\mathsf{Lax}^+\mathsf{Oplax}^0$-monoidal.
Dually, we consider $\mathsf{Lax}_0\mathsf{Oplax}_+$-monoidal categories which are oplax coherent for positive numbers of factors and lax coherent for nullary monoidal products.
We define $\mathsf{Lax}^+_0\mathsf{Oplax}^0_+$-
duoidal categories with compatible $\mathsf{Lax}^+\mathsf{Oplax}^0$- and $\mathsf{Lax}_0\mathsf{Oplax}_+$-monoidal structures. We introduce comonoids in $\mathsf{Lax}^+\mathsf{Oplax}^0$-monoidal categories, monoids in $\mathsf{Lax}_0\mathsf{Oplax}_+$-monoidal categories and bimonoids in $\mathsf{Lax}^+_0\mathsf{Oplax}^0_+$-
duoidal categories.

Motivation for these notions comes from a generalization of a construction in \cite{CaenepeelGoyvaerts}. This assigns a $\mathsf{Lax}^+_0\mathsf{Oplax}^0_+$-duoidal category $\mathsf D$ to any symmetric monoidal category $\mathsf V$. The 
unital \BiHom-monoids, counital \BiHom-comonoids, and unital and counital \BiHom-bimonoids of \cite{GraMakMenPan} in $\mathsf V$ are identified with the monoids, comonoids and bimonoids in $\mathsf D$, respectively.
\end{abstract}
 
\maketitle


\section*{Introduction} \label{sec:intro}

In recent years, lead by diverse motivations, several different generalizations of Hopf algebra have been proposed. Their similar features naturally raise the question whether they are instances of the same, more general notion. In \cite{BohmLack} several examples, such as groupoids, Hopf monoids in braided monoidal categories, Hopf algebroids over central base algebras and weak Hopf algebras, were unified as Hopf monoids in the duoidal (called 2-monoidal in \cite{AguiarMahajan}) endohom category of a naturally Frobenius map monoidale in a suitable monoidal bicategory. In \cite{Bohm} also Hopf group algebras \cite{Turaev}, Hopf categories \cite{BatCaeVer} and Hopf polyads \cite{Bruguieres} were shown to fit this framework.

The aim of the current paper is an interpretation of the (unital and counital) \BiHom-bimonoids in \cite{GraMakMenPan} as bimonoids in a category with some generalized duoidal structure. Recall that a {\em \BiHom-monoid} in a monoidal category $(\mathsf V,\otimes,I)$ (whose coherence natural isomorphisms are omitted) consists of an object $a$ together with morphisms $\alpha,\beta:a\to a$ and $\mu:a\otimes a \to a$ such that $\alpha$ and $\beta$ commute, both of them preserve the multiplication $\mu$, and, instead of the associativity of $\mu$, the first diagram of 
\begin{equation} \label{eq:BiHom} \tag{$\ast$}
\xymatrix@C=35pt{
a\otimes a \otimes a \ar[d]_-{\mu \otimes 1} &
\ar[l]_-{1\otimes 1\otimes \beta} 
a\otimes a \otimes a 
\ar[r]^-{\alpha \otimes 1 \otimes 1} &
a\otimes a \otimes a 
\ar[d]^-{1\otimes \mu} \\
a\otimes a \ar[r]_-\mu &
a &
\ar[l]^-\mu a\otimes a}
\qquad 
\xymatrix{
a \ar[d]_-{\eta \otimes 1} \ar[r]^-\beta & 
a \ar@{=}[d] &
\ar[l]_-\alpha a \ar[d]^-{1\otimes \eta} \\
a \otimes a \ar[r]_-\mu &
a &
\ar[l]^-\mu a\otimes a}
\end{equation}
commutes. A \BiHom-monoid $(a,\alpha,\beta,\mu)$ is said to possess a {\em unit} $\eta:I\to a$ if it is preserved by $\alpha$ and $\beta$ and also the second diagram above commutes.
Diagrams with reversed arrows define {\em (counital) \BiHom-comonoids}. The {\em (unital and counital) \BiHom-bimonoids} in a {\em symmetric} monoidal category consist of a (unital) \BiHom-monoid structure and a (counital) \BiHom-comonoid structure on the same object $a$ (with possibly different endomorphism parts $\alpha,\beta$ and $\kappa,\nu$) such that the multiplication $\mu$ (and the unit $\eta$) are morphisms of (counital) \BiHom-comonoids; equivalently, the comultiplication (and the counit) are morphisms of (unital) \BiHom-monoids.

One can see several attempts in the literature aiming at a description of (unital) \BiHom-monoids, (counital) \BiHom-comonoids and (unital and counital) \BiHom-bi\-monoids, respectively, as monoids, comonoids and bimonoids in a suitable category. All of these ideas originate from the same construction in \cite{CaenepeelGoyvaerts}. In \cite{CaenepeelGoyvaerts}, to any monoidal category $(\mathsf V,\otimes, I)$, a category is associated whose objects are pairs consisting of an object $a$ of $\mathsf V$ and an automorphism $\alpha$ of $a$. The morphisms $(a,\alpha) \to (a',\alpha')$ are morphisms $\phi: a\to a'$ in $\mathsf V$ such that $\phi.\alpha=\alpha'.\phi$. It is equipped with a monoidal structure with monoidal unit $(I,1)$, monoidal product $(a,\alpha) \otimes (a',\alpha'):=(a\otimes a',\alpha\otimes \alpha')$ and associativity and unitality coherence isomorphisms 
\begin{eqnarray*}
\alpha \otimes 1 \otimes \alpha^{\prime\prime-1}:
((a,\alpha) \otimes (a',\alpha')) \otimes (a^{\prime\prime},\alpha^{\prime\prime}) &\to& 
(a,\alpha) \otimes ((a',\alpha') \otimes (a^{\prime\prime},\alpha^{\prime\prime})),
\\
\alpha:(I,1) \otimes (a,\alpha) &\to&  (a,\alpha),
\\
\alpha:(a,\alpha) \otimes (I,1) &\to&  (a,\alpha).
\end{eqnarray*}
The monoids (respectively, comonoids) in this monoidal category are those unital \BiHom-monoids (respectively, those counital \BiHom-comonoids) in $\mathsf V$ whose two constituent endomorphisms are equal automorphisms.
They are called in \cite{CaenepeelGoyvaerts} {\em monoidal} $\mathsf{Hom}$-monoids (respectively, {\em monoidal} $\mathsf{Hom}$-comonoids).
Any possible symmetry on $(\mathsf V,\otimes, i)$
is inherited by this monoidal category of \cite{CaenepeelGoyvaerts}. 
Then the bimonoids therein are those unital and counital \BiHom-bimonoids in $\mathsf V$ whose comonoid part contains two copies of an automorphism and the monoid part contains two copies of its inverse. They are called in \cite{CaenepeelGoyvaerts} {\em monoidal} $\mathsf{Hom}$-bimonoids.

The above construction from \cite{CaenepeelGoyvaerts} was widely generalized in \cite[Section 2]{GraMakMenPan}. In the particular case which is relevant here, the objects are triples consisting of an object $a$ of $\mathsf V$ and {\em two} commuting automorphisms $\alpha$ and $\beta$ of $a$. The resulting category also admits a similar (symmetric) monoidal structure (where half of the occurring morphisms $\alpha$ is replaced by $\beta$). The monoids, comonoids and bimonoids in this (symmetric) monoidal category cover also those unital \BiHom-monoids, counital \BiHom-comonoids and unital and counital \BiHom-bimonoids, respectively, in which two possibly unrelated automorphisms occur.
Such \BiHom-objects are called in \cite{GraMakMenPan} {\em monoidal} \BiHom-monoids, {\em monoidal} \BiHom-comonoids and {\em monoidal} \BiHom-bimonoids respectively.

So the essential observation, originated in \cite{CaenepeelGoyvaerts}, is that these {\em monoidal} $\mathsf{Hom}$- and \BiHom-structures are monoids, comonoids and bimonoids in a suitable (symmetric) monoidal category. Consequently, the standard machinery applies to them. This explains why many aspects of this theory have been obtained in recent literature with a mild adaptation of the classical proofs.

However, more general \BiHom-structures, with not necessarily invertible endomorphism parts, are not covered by this construction, and it is the main aim of this paper to provide a categorical construction that covers these cases.

Firstly, let us remark that t\black he category whose objects consist of an object $a$ of a symmetric monoidal category $\mathsf V$ together with {\em four} commuting automorphisms of $a$ carries a {\em duoidal} structure (termed {\em 2-monoidal} in \cite{AguiarMahajan}). One monoidal structure is defined in terms of one half of the automorphisms and a second monoidal structure is defined in terms of the other half. The compatibility morphisms are 
provided by the symmetry of $\mathsf V$. The monoids, comonoids and bimonoids in this duoidal category finally cover also those unital \BiHom-monoids, counital \BiHom-comonoids and unital and counital \BiHom-bimonoids, respectively, in which four possibly different, but still invertible endomorphisms occur.
(A construction of this flavor occurred in \cite{ZhangWang:BiHom}. However, that construction is based on four monoidal natural automorphisms of the identity functor on $\mathsf V$. This seems to be quite restrictive since the identity functor on the category of vector spaces, for example, admits no other monoidal natural automorphism but the identity.)

In order to deal with \BiHom-structures with not necessarily invertible endomorphism parts, one should give up the invertibility of the coherence natural transformations and use the {\em unbiased} variants of monoidal category \cite{Leinster}. Although there is a meaningful theory of duoidal categories with one lax and one oplax monoidal structure \cite[Section 4]{DayStreet}, for the description of \BiHom-structures neither the lax nor the oplax variant looks suitable but rather a certain mixture of both. Indeed, for the formulation of the diagrams of \eqref{eq:BiHom}, one needs coherence morphisms of the kind
$$
\xymatrix{
(a\otimes b) \otimes c & 
\ar[l] a\otimes b \otimes c \ar[r] &
a\otimes (b\otimes c)}
\qquad
\xymatrix{
I \otimes a \ar[r] &
a &
\ar[l] a\otimes I.}
$$
Those on the left are of the oplax type, while those on the right are of the lax type.
In this paper we define unbiased monoidal categories of the above mixed type. 

As a first step, we consider $\mathsf{Lax}^+$- (respectively, $\mathsf{Oplax}_+$- ) monoidal categories with monoidal products of only positive number of factors with lax (respectively, oplax) coherence morphisms. We define cosemigroups (respectively, semigroups) in such monoidal categories. We also define $\mathsf{Lax}^+\mathsf{Oplax}_+$-duoidal categories which possess compatible $\mathsf{Lax}^+$- and $\mathsf{Oplax}_+$-monoidal structures. 
Semigroups in a $\mathsf{Lax}^+\mathsf{Oplax}_+$-duoidal category are shown to constitute a $\mathsf{Lax}^+$-monoidal category; dually, cosemigroups in a $\mathsf{Lax}^+\mathsf{Oplax}_+$-duoidal category are shown to constitute an $\mathsf{Oplax}_+$-monoidal category. So we can define bisemigroups in a $\mathsf{Lax}^+\mathsf{Oplax}_+$-duoidal category as cosemigroups in the category of semigroups; equivalently, as semigroups in the category of cosemigroups.

Next we introduce so called $\mathsf{Lax}_0\mathsf{Oplax}_+$-monoidal categories, which have $n$-fold monoidal products for any non-negative integer $n$ such that the monoidal products of positive number of factors are oplax coherent while the $0$-fold monoidal product is lax coherent. 
Forgetting about $0$-fold monoidal products, a $\mathsf{Lax}_0\mathsf{Oplax}_+$-monoidal category can be seen $\mathsf{Oplax}_+$-monoidal.
We define monoids in $\mathsf{Lax}_0\mathsf{Oplax}_+$-monoidal categories.
Dually, we introduce so called $\mathsf{Lax}^+\mathsf{Oplax}^0$-monoidal categories, again with $n$-fold monoidal products for any non-negative integer $n$ such that the monoidal products of positive number of factors are lax coherent while the $0$-fold monoidal product is oplax coherent. We define comonoids in $\mathsf{Lax}^+\mathsf{Oplax}^0$-monoidal categories.
Finally we define $\mathsf{Lax}^+_0\mathsf{Oplax}^0_+$-duoidal categories with compatible $\mathsf{Lax}^+\mathsf{Oplax}^0$- and $\mathsf{Lax}_0\mathsf{Oplax}_+$-monoidal structures. Monoids in them are shown to constitute a $\mathsf{Lax}_0\mathsf{Oplax}_+$-monoidal category and, dually, comonoids in them are shown to constitute a $\mathsf{Lax}^+\mathsf{Oplax}^0$-monoidal category. So we can define bimonoids in them as comonoids in the category of monoids; equivalently, as monoids in the category of comonoids.

Generalizing the construction in \cite{CaenepeelGoyvaerts}, we associate a $\mathsf{Lax}^+_0\mathsf{Oplax}^0_+$-duoidal category $\mathsf D$ to any symmetric monoidal category $\mathsf V$. We identify the semigroups, cosemigroups and bisemigroups in $\mathsf D$ with the \BiHom-monoids, \BiHom-comonoids and the \BiHom-bimonoids in $\mathsf V$, respectively. We also identify the monoids, comonoids and bimonoids in $\mathsf D$ with the unital \BiHom-monoids, counital \BiHom-comonoids and the unital and counital \BiHom-bimonoids in $\mathsf V$, respectively, in the sense of \cite{GraMakMenPan}.

The consequence of our construction is twofold. Firstly, by extending the {\em microcosm} hosting bimonoids \cite{BaezDolan}, it provides a suitable categorical framework where \BiHom-objects with not necessarily invertible endomorphisms can be studied. On the other hand, our proposed setting goes beyond the known framework of symmetric monoidal categories, and even of duoidal categories. In this way results for such objects can no longer directly be obtained from the existing theory of monoids, comonoids a bimonoids in duoidal categories, let alone in (symmetric) monoidal categories. Indeed, one would need first to extend this theory to the framework of $\mathsf{Lax}^+_0\mathsf{Oplax}^0_+$-duoidal categories, which is more involved than the classical case. This indicates that the theory of \BiHom-objects with not necessarily invertible endomorphisms is itself more involved; no longer a straightforward generalization of the classical theory.
\black

The same axioms defining our various lax and oplax monoidal categories can be used to define monoids of the same lax and oplax type in any Gray monoid. 
Such monoids can be seen as the 0-cells of a 2-category.
If the ambient Gray monoid is a symmetric strict monoidal 2-category, then this 2-category of monoids is again strict monoidal. In this case we can also define duoids of various lax and oplax type, as suitably lax monoids in the strict monoidal 2-category of suitably lax monoids. 

However, not to distract attention from the main aim to describe \BiHom-structures, we do not work at the level of generality of the previous paragraph. We restrict our study to the symmetric strict monoidal 2-category $\mathsf{Cat}$ of categories, functors, and natural transformations. 

We do not assume that the monoidal categories in the paper are strict monoidal but --- relying on coherence --- we omit explicitly denoting their Mac Lane type coherence natural isomorphisms.

\subsection*{Acknowledgement}
GB is grateful for the financial support by the Hungarian National Research, Development and Innovation Office – NKFIH (grant K124138).
JV thanks the FNRS (National Research Fund of the French speaking community in Belgium) for support via the MIS project `Antipode' (grant F.4502.18).

\section{Some combinatorial background} \label{sec:preli}

As a preparation for the constructions of the subsequent sections, we begin with recording some notation and a few technical results that will be heavily applied later.

\begin{claim}
In the 2-category $\mathsf{Cat}$ of categories, functors and natural transformations we denote by $\cdot$ the composition of functors and the corresponding Godement product of natural transformations.
$\mathsf{Cat}$ is strict monoidal via the Cartesian product $\times$ and symmetric via the flip maps. We often denote the Cartesian product (of any of categories, functors, and natural transformations) by juxtaposition. The Cartesian product of $n$ copies of the same category $\mathsf A$ is also denoted by $\mathsf A^n$. By $\mathsf A^0$ we mean the singleton category $\mathbbm 1$ for any category $\mathsf A$.
For all non-negative integers $n,p$, consider the particular components
$$
\xymatrix@C=2pt @R=4pt{
\mathsf{Cat}^{np} \ar@{=}[r] \ar@{=}[d] 
\ar@{}[rrrrdddd]|-{\Longdownarrow {\tau_{np}}} &
(\mathsf{Cat}^{p} )^n \ar[rrr]^-{\stackrel \times n}  &&&
\mathsf{Cat}^p \ar[dddd]^-{\stackrel \times p} \\
(\mathsf{Cat}^{n})^p
\ar[ddd]_-{\stackrel \times p} \\
\\
\\
\mathsf{Cat}^n \ar[rrrr]_-{\stackrel \times n}  &&&&
\mathsf{Cat}}
$$
of the symmetry. They are 2-natural transformations whose component at any double sequence of categories $\{\mathsf A_{ij}\}_{\stackrel{i=1,\dots,p}{{}_{j=1,\dots,n}}}$ is the flip functor
\begin{eqnarray*}
& (A_{11},\dots ,A_{1n}), \dots, (A_{p1},\dots,A_{pn}) &\to
(A_{11},\dots, A_{p1}), \dots, (A_{1n},\dots, A_{pn}),
\\
& (a_{11},\dots, a_{1n}), \dots, (a_{p1},\dots, a_{pn}) &\mapsto
(a_{11},\dots ,a_{p1}), \dots, (a_{1n},\dots ,a_{pn}).
\end{eqnarray*}
They satisfy the equalities
\begin{equation}\label{eq:tau}
\tau_{np}\cdot \tau_{pn}=1
\qquad \textrm{and} \qquad 
\tau_{n1}=1
\qquad  \textrm{and} \qquad
\tau_{np} \cdot (\tau_{nk_1} \dots \tau_{nk_p})=
\tau_{n\sum_{i=1}^pk_i}
\end{equation}
for all non-negative integers $n,p$ and $k_1,\dots,k_p$.
\end{claim}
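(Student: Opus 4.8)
The plan is to verify the three identities in \eqref{eq:tau} by unwinding the explicit description of the flip functors $\tau_{np}$ given just above. Since each $\tau_{np}$ is (componentwise) an honest isomorphism of categories — indeed a flip that merely relabels the indexing of a double sequence — and since composition of 2-natural transformations is computed componentwise, it suffices to check that the underlying index-permutations compose as claimed; naturality in the entries and 2-naturality then come along for free because everything in sight is built from identities and flips in $\mathsf{Cat}$.

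First I would fix notation: for a double sequence $\{\mathsf A_{ij}\}$ with $i=1,\dots,p$ and $j=1,\dots,n$, the component $(\tau_{np})_{\{\mathsf A_{ij}\}}$ sends the $p$-tuple of $n$-tuples $\bigl((A_{i1},\dots,A_{in})\bigr)_{i=1}^p$ to the $n$-tuple of $p$-tuples $\bigl((A_{1j},\dots,A_{pj})\bigr)_{j=1}^n$, i.e.\ it is the map on objects (and likewise on morphisms) induced by the bijection $(i,j)\mapsto(j,i)$ on index pairs. For the first identity, $(\tau_{pn})_{\{\mathsf A_{ij}\}}$ is induced by $(j,i)\mapsto(i,j)$ at the appropriately transposed double sequence, so the composite $\tau_{np}\cdot\tau_{pn}$ is induced by the identity permutation of index pairs, hence equals $1$. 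For the second identity, when $p=1$ an $n$-tuple of $1$-tuples is literally an $n$-tuple (under the strictness conventions $\mathsf A\times\mathbbm 1\cong\mathsf A$ fixed in the Claim), and the flip $(i,j)\mapsto(j,i)$ with $i$ ranging over a singleton is the identity, so $\tau_{n1}=1$.

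The third identity is the one requiring the most care, so I expect it to be the main obstacle — though the obstruction is bookkeeping, not conceptual. Here one starts with a triple-indexed family, thinking of $\mathsf{Cat}^{n\sum k_i}$ as $\prod_{i=1}^p \mathsf{Cat}^{nk_i}$; the product $\tau_{nk_1}\cdots\tau_{nk_p}$ transposes within each block of size $nk_i$ to turn an $n$-tuple of $k_i$-tuples into a $k_i$-tuple of $n$-tuples, yielding an element of $\prod_{i=1}^p \mathsf{Cat}^{k_i n}=(\mathsf{Cat}^n)^{\sum k_i}$; then $\tau_{np}\colon (\mathsf{Cat}^n)^{\sum k_i}$ — regrouped as $\bigl((\mathsf{Cat}^n)^{k_i}\bigr)_{i=1}^p$ — is applied. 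I would track the image of a general entry through this two-stage composite and through the single map $\tau_{n\sum k_i}$, and check that both send the entry indexed by $(i, a, j)$ (with $1\le i\le p$, $1\le a\le k_i$, $1\le j\le n$) to the same slot, using only the associativity of the Cartesian product and the fact that $\mathsf{Cat}$ is strict monoidal with the flips as symmetry; the verification reduces to the equality of two explicit reindexing bijections on the finite set of such triples. Throughout, I would invoke that these are components of the symmetry of a symmetric strict monoidal 2-category, so the required 2-naturality and coherence hold automatically and need not be re-derived.
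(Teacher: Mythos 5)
Your verification is correct: the paper states these identities without proof (the Claim only records them as evident properties of the explicit flip components of the symmetry of the Cartesian strict monoidal structure on $\mathsf{Cat}$), and your componentwise index-permutation check — including the careful tracking of the triple index $(i,a,j)$ for the third identity — is exactly the routine argument the paper leaves implicit. Nothing is missing; the zero cases ($n$, $p$ or some $k_i$ equal to $0$) degenerate harmlessly under the convention $\mathsf A^0=\mathbbm 1$, as your bijection argument already covers.
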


\begin{claim}
We adopt the convention that empty sums are equal to zero. 
The set of non-negative integers is denoted by $\mathbb N$ and the set of (strictly) positive integers is denoted by $\mathbb N^+$.

The following maps $\mathbb N\to \mathbb N$ will occur frequently.
$$
Z(m):= \left\{
\begin{array}{ll}
0 & \textrm{if} \quad m>0\\
1 & \textrm{if} \quad m=0
\end{array} \right.
\qquad \qquad
\overline{m}:=m+Z(m).
$$

For $n\in\mathbb N$ and a sequence of non-negative integers $\{m_1,\dots,m_n\}$ we denote the sum and the number of zeros, respectively by
$$
M:=\sum_{i=1}^n m_i
\qquad \textrm{and}\qquad 
Z:= \sum_{i=1}^n Z(m_i).
$$
Clearly, $M+Z \geq n$ and thus $M+Z=0$ iff $n=0$.

For a double sequence of non-negative integers 
$\{k_{ij}\}_{\stackrel{i=1,\dots,n}{{}_{j=1,\dots,m_i}}}$ (also in the particular case when $n=1$), we put
$$
K_i:=\sum_{j=1}^{m_i} k_{ij} 
\qquad \qquad
Z_i:=\sum_{j=1}^{m_i} Z(k_{ij})
\qquad \qquad
K:=\sum_{i=1}^n K_i 
\qquad \qquad
Z:=\sum_{i=1}^n Z_i.
$$
We introduce two associated double sequences with $i=1,\dots,n$ and $j=1,\dots,\overline m_i$:
$$
\widetilde k_{ij}:=\left\{
\begin{array}{ll}
k_{ij} & \textrm{if} \quad m_i >0 \\
0 & \textrm{if} \quad m_i =0
\end{array}
\right.
\qquad \qquad
\widehat k_{ij}:=\left\{
\begin{array}{ll}
k_{ij} & \textrm{if} \quad K_i >0 \\
1 & \textrm{if} \quad K_i =0, m_i>0, j=1 \\
0 & \textrm{if} \quad K_i =0, m_i>0, j>1 \\
1 & \textrm{if} \quad m_i =0.
\end{array}
\right.
$$
We put
$$
\widetilde Z:=\sum_{i=1}^n \sum_{j=1}^{\overline m_i} 
Z(\widetilde k_{ij})=
Z+\sum_{i=1}^n Z(m_i)
=Z+\sum_{i=1}^n Z(K_i+Z_i).
$$
Note that
$$
\sum_{i=1}^n \sum_{j=1}^{\overline m_i} 
(\widehat k_{ij}+Z(\widehat k_{ij}))=
\sum_{i=1}^n \sum_{j=1}^{\overline m_i} 
(\widetilde k_{ij}+Z(\widetilde k_{ij}))=
\sum_{i=1}^n K_i+ Z_i + Z(K_i+Z_i)=
K+\widetilde Z.
$$
\end{claim}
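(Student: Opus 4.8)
The plan is to reduce every assertion in the statement to the single elementary fact that
$$m+Z(m)=\overline m=\max(m,1)\qquad\text{for all }m\in\mathbb N,$$
so that $\overline m\geq 1$ always, $\overline m=m$ precisely when $m>0$, and $\overline 0=1$. From this one reads off the two ``Clearly'' claims at once: $M+Z=\sum_{i=1}^n(m_i+Z(m_i))=\sum_{i=1}^n\overline{m_i}\geq n$, being a sum of $n$ terms each at least $1$, so it vanishes exactly when the index set is empty, i.e.\ when $n=0$. The very same computation applied to a single row of the double sequence records that $K_i+Z_i=\sum_{j=1}^{m_i}\overline{k_{ij}}\geq m_i$, a fact used repeatedly below.

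For the triple equality defining $\widetilde Z$ I would evaluate the inner sum $\sum_{j=1}^{\overline m_i}Z(\widetilde k_{ij})$ separately according to whether $m_i>0$ or $m_i=0$. If $m_i>0$ then $\overline m_i=m_i$ and $\widetilde k_{ij}=k_{ij}$, so the inner sum is $Z_i$; if $m_i=0$ then $\overline m_i=1$, the single entry is $\widetilde k_{i1}=0$, so the inner sum is $Z(0)=1$, while $Z_i=0$ is an empty sum. In both cases the inner sum equals $Z_i+Z(m_i)$, and summing over $i$ gives $\widetilde Z=Z+\sum_i Z(m_i)$. For the second equality it suffices to check $Z(m_i)=Z(K_i+Z_i)$ for each $i$: if $m_i=0$ then $K_i$ and $Z_i$ are both empty sums, so $K_i+Z_i=0$ and both sides equal $1$; if $m_i>0$ then $K_i+Z_i\geq m_i\geq 1$, so both sides equal $0$.

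For the ``Note that'' chain the main step is to prove, for each fixed $i$, that
$$\sum_{j=1}^{\overline m_i}\bigl(\widehat k_{ij}+Z(\widehat k_{ij})\bigr)=\sum_{j=1}^{\overline m_i}\bigl(\widetilde k_{ij}+Z(\widetilde k_{ij})\bigr)=K_i+Z_i+Z(K_i+Z_i);$$
summing this over $i$ and using the identities of the previous paragraph (in the form $\sum_i Z(K_i+Z_i)=\sum_i Z(m_i)$ together with $\widetilde Z=Z+\sum_i Z(m_i)$) yields $K+Z+\sum_i Z(K_i+Z_i)=K+\widetilde Z$, as required. Rewriting each summand as $\overline{\widehat k_{ij}}$, resp.\ $\overline{\widetilde k_{ij}}$, I would split into the three cases built into the definition of $\widehat k_{ij}$. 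If $K_i>0$, then $m_i>0$, both modified rows coincide with $(k_{i1},\dots,k_{im_i})$, the sum equals $\sum_j\overline{k_{ij}}=K_i+Z_i$, and $Z(K_i+Z_i)=0$ since $K_i+Z_i\geq K_i>0$. If $K_i=0$ but $m_i>0$, then every $k_{ij}$ vanishes, so $\widetilde k_{i\bullet}=(0,\dots,0)$ and $\widehat k_{i\bullet}=(1,0,\dots,0)$, each of length $m_i$, both giving $\sum_j\overline{\widetilde k_{ij}}=\sum_j\overline{\widehat k_{ij}}=m_i$, while $K_i+Z_i=0+m_i=m_i$ and $Z(m_i)=0$. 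If $m_i=0$, then $\overline m_i=1$, the single entries $\widetilde k_{i1}=0$ and $\widehat k_{i1}=1$ each contribute $\overline 0=\overline 1=1$, and $K_i+Z_i=0$ with $Z(0)=1$. In every case the three quantities coincide.

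No genuine difficulty is expected: the statement is pure bookkeeping, and the only point requiring care is the behaviour of the degenerate rows --- those with $m_i=0$, and those with $m_i>0$ but $K_i=0$ --- where several of the sums occurring are empty and one must invoke the convention that empty sums vanish. Carrying the uniform three-way case split ($K_i>0$; $K_i=0<m_i$; $m_i=0$) through each identity is what keeps the argument routine.
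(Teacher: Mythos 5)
Your proof is correct and matches the paper's intent: the paper states these identities without detailed justification (the displayed chain of equalities is the whole argument), and your reduction to $\overline m=\max(m,1)$ with the row-by-row case split $K_i>0$, $K_i=0<m_i$, $m_i=0$ is exactly the routine verification being left implicit. No discrepancies.
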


For any non-negative integer $k$, and for any functor $F$ from the singleton category $\mathbbm 1$ to an arbitrary category $\mathsf A$, we introduce the functor
\begin{equation} \label{eq:[k]} 
\xymatrix@C=17pt{\mathsf A^k \ar[r]^-{[k]} & \mathsf  A^{\overline k}}:=\left\{
\begin{array}{ll}
\xymatrix@C=17pt{\mathsf A^{k} \ar[r]^-1 & \mathsf A^{k}}
& \textrm{if} \quad k>0 \\
\xymatrix{\,  \mathbbm 1  \ar[r]^-F & \mathsf A\ }
& \textrm{if} \quad k=0
\end{array}
\right.
\end{equation}
The proof of the following easy lemma about such functors in \eqref{eq:[k]} is left to the reader. 

\begin{lemma} \label{lem:[F]}
For any sequence of non-negative integers $\{k_1,\dots,k_m\}$, for any functor $F$ from the singleton category $\mathbbm 1$ to an arbitrary category $\mathsf A$, and for the associated functors of \eqref{eq:[k]}, the following diagrams commute.
$$
\xymatrix@C=50pt{
\mathsf A^K \ar[r]^-{[k_1] \dots[k_m]}
\ar[rd]_-{[\widetilde k_1] \dots[\widetilde k_{\overline m}]} &
\mathsf A^{K+Z} \ar[d]^-{[K+Z]} \\
& \mathsf A^{K+\widetilde Z}}
\qquad \qquad
\xymatrix@C=50pt{
\mathsf A \ar[r]^-{[K]}
\ar[rd]_-{[\widetilde k_1] \dots[\widetilde k_{\overline m}]} &
\mathsf A^{K+Z(K)}
\ar[d]^-{[\widehat k_1] \dots[\widehat k_{\overline m}]} \\
& \mathsf A^{K+\widetilde Z}}
$$
\end{lemma}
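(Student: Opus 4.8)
The plan is to verify each of the two squares by unwinding the combinatorial data of the preliminaries and then comparing the two composites as Cartesian products of the elementary functors $[k]$ of \eqref{eq:[k]}; in both cases a short dichotomy reduces the claim to a triviality. The two observations that make this essentially automatic are: since $[k]=\mathrm{id}_{\mathsf A^k}$ for $k>0$ while $[0]=F$, and since $\overline k\geq 1$ for every $k\in\mathbb N$, the functor $[j]$ is an identity whenever $j\geq 1$; in particular $[\,\overline k\,]=\mathrm{id}$ always. I would also note that all six arrows in the statement are well-typed: one has $\sum_{j=1}^{\overline m}\widetilde k_j=K$ and $\sum_{j=1}^{\overline m}\widehat k_j=K+Z(K)$ for the domains, while $\sum_{j=1}^{\overline m}\overline{\widetilde k_j}=\sum_{j=1}^{\overline m}\overline{\widehat k_j}=K+\widetilde Z$ and $\overline{K+Z}=K+\widetilde Z$ for the codomains, the latter two being the displayed identities of the preliminaries (specialised to a single sequence, $n=1$) together with $\widetilde Z=Z+Z(K+Z)$.

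For the first square I would split on whether the sequence is empty. If $m>0$, then $K+Z=\sum_{j=1}^m\overline{k_j}\geq m\geq 1$, hence $Z(K+Z)=0$, so $\widetilde Z=Z$ and $[K+Z]=\mathrm{id}$; moreover $\overline m=m$ and $\widetilde k_j=k_j$, so both legs of the triangle are literally $[k_1]\cdots[k_m]$. If $m=0$, then $K=Z=0$, the top arrow is the empty product $\mathrm{id}_{\mathbbm 1}$, and because $\overline m=1$ and $\widetilde k_1=0$ both legs equal $[0]=F$.

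For the second square I would split on whether $K>0$. If $K>0$ (which forces $m>0$), then $Z(K)=0$ and $[K]=\mathrm{id}$, while $\widehat k_j=k_j=\widetilde k_j$ and $\overline m=m$, so again both legs reduce to $[k_1]\cdots[k_m]$. If $K=0$, then $[K]=[0]=F$, and here I would compute directly on objects (equivalently, invoke strict functoriality of $\times$): in $[\widehat k_1]\cdots[\widehat k_{\overline m}]$ the first factor has $\widehat k_1=1$, so $[\widehat k_1]=\mathrm{id}_{\mathsf A}$ receives exactly the object produced by $[K]=F$, whereas every remaining factor is $[\widehat k_j]=[0]=F$; the composite therefore sends the object of $\mathbbm 1$ to the $\overline m$-tuple all of whose entries are $F$ of it, i.e.\ it is $[0]^{\times\overline m}$. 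Since $\widetilde k_j=0$ for every $j$ when $K=0$, this is precisely the diagonal $[\widetilde k_1]\cdots[\widetilde k_{\overline m}]$, and the sub-case $m=0$ is the degenerate instance $\overline m=1$, $\widehat k_1=1$, $\widetilde k_1=0$ of the same computation.

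The only step that is not completely mechanical --- and hence the ``main obstacle'', modest as it is --- is this last ($K=0$) sub-case of the second square, where one must check that the padding index $\widehat k_1=1$ cooperates with $[K]=F$ to reproduce the fully $F$-padded functor $[0]^{\times\overline m}$; this is exactly the behaviour the definitions of $k\mapsto\widetilde k$ and $k\mapsto\widehat k$ were engineered to produce, so it amounts to unwinding those definitions. Everything else is bookkeeping with $Z(\cdot)$ and $\overline{(\cdot)}$.
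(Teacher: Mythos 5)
Your proof is correct, and since the paper explicitly leaves this lemma to the reader there is no authorial proof to diverge from: your case analysis ($m>0$ versus $m=0$ for the first triangle, $K>0$ versus $K=0$ for the second) is exactly the elementary unwinding of the definitions of $Z(\cdot)$, $\overline{(\cdot)}$, $\widetilde k_j$ and $\widehat k_j$ that the authors intend. Note also that you implicitly (and correctly) read the left vertex of the second triangle as $\mathsf A^K$ rather than the printed $\mathsf A$, which is the reading forced by the typing of $[K]$ and by the lemma's use in Figure \ref{fig:lax+oplax0}.
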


\section{Cosemigroups in $\mathsf{Lax}^+$-monoidal categories}

Following the {\em microcosm} principle \cite[Section 4.3]{BaezDolan}, in order to define semigroups and cosemigroups internally to some monoidal category, one only needs to have products of positive number of factors, not necessarily nullary ones. In this section we introduce cosemigroups in monoidal categories with monoidal products of arbitrary positive number of factors and lax coherence morphisms between them. The dual situation is discussed in the next section.

Throughout, we use the notation introduced in Section \ref{sec:preli}.

\begin{definition}
\label{def:lax+}
A {\em $\mathsf{Lax}^+$-monoidal category} consists of 
\begin{itemize}
\item a category $\mathsf L$
\item for all positive integers $n$, a functor $\lax{n}:{
\mathsf L}^{n} \to \mathsf L$ 
\item for all positive integers $n,k_1,\dots,k_n$,
natural transformations
$$
\xymatrix@C=80pt{
{\mathsf L}^K
\ar[r]^-{\lax {k_1} \ \cdots \ \lax{k_n}}
\ar@/_1.6pc/[rd]_-{\lax {K}}
\ar@{}[rd]|-{\qquad \Longdownarrow {\Phi_{k_1,\dots,k_n}}} &
{\mathsf L}^n \ar[d]^-{\lax n}  \\
& {\mathsf L}}
\qquad\qquad
\raisebox{-17pt}{$
\xymatrix@C=60pt{
{\mathsf L} \ar@{=}@/^1.7pc/[r] \ar@/_1.7pc/[r]_-{\lax 1}
\ar@{}[r]|-{\Longdownarrow \iota} &
\mathsf L}$}
$$
\end{itemize}
such that the diagrams
$$
\xymatrix@C=45pt@R=24pt{
\lax n \!\cdot \!
(\lax {m_1}\cdots \lax{m_n}) \! \cdot \!
(\raisebox{-1.5pt}{$\lax{k_{11}} \cdots \lax {k_{1m_1}} \cdots \lax{k_{n1}} \cdots \lax{k_{nm_n}}$})
\ar[r]^-{\Phi_{m_1,\dots,m_n} \cdot 1}
\ar[d]^(.45){1\cdot(\Phi_{k_{11},\dots,k_{1m_1}} \cdots \Phi_{k_{n1},\dots,k_{nm_n}})} &
\lax M \!\cdot \!
(\lax{k_{11}} \cdots \lax {k_{1m_1}} \cdots \lax{k_{n1}} \cdots \lax{k_{nm_n}}) \ar[d]_(.7){\Phi_{k_{11},\dots,k_{nm_n}}} \\
\lax n \cdot 
(\raisebox{-1.5pt}{$\lax{K_1} \cdots \lax{K_n}$})
\ar[r]_-{\Phi_{K_1,\dots,K_n}} &
\lax K}
\hspace{-.5cm}
\xymatrix@C=30pt@R=26pt{
\lax n \ar[r]^-{\iota\cdot 1} \ar[d]^(.6){1\cdot (\iota \cdots \iota)}
\ar@{=}[rd] &
\raisebox{-1.5pt}{$\lax 1$} \cdot \lax n \ar[d]^-{\Phi_{n}} \\
\lax n \cdot (\raisebox{-1.5pt}{$\lax 1 \cdots \lax 1$}) \ar[r]_-{\Phi_{1,\dots,1}} &
\lax n}
$$
commute, for any positive integers $n$, $\{m_i\}$ labelled by ${i=1,\dots,n}$, and $\{k_{ij}\}$ labelled by $i=1,\dots ,n$ and $j=1,\dots,m_i$.

A $\mathsf{Lax}^+$-monoidal category is {\em normal} if $\iota$ is invertible and {\em strict normal} if $\iota$ is the identity natural transformation.
\end{definition}

More generally, $\mathsf{Lax}^+$-monoids can be defined in any Gray monoid --- so in particular in any strict monoidal 2-category --- by the same diagrams in Definition \ref{def:lax+}. $\mathsf{Lax}^+$-monoidal categories are then re-obtained by applying it to $\mathsf{Cat}$.

Throughout, in a $\mathsf{Lax}^+$-monoidal category we denote by $\lb a \rb$ the image of any object $a$ under the functor $\lax 1: \mathsf L \to \mathsf L$; and we denote by $a_1 \Lax \cdots \Lax a_n$ the image of an object $(a_1,\dots,a_n)$ under the functor $\lax n:\mathsf L^n \to \mathsf L$.

\begin{proposition}
\label{prop:Cartesian+}
The Cartesian product $\mathsf L \mathsf L'$ of $\mathsf{Lax}^+$-monoidal categories $(\mathsf L,\Lax,\Phi,\iota)$ and $(\mathsf L',\Lax',\Phi',\iota')$ is again $\mathsf{Lax}^+$-monoidal via 
\begin{itemize}
\item the functors 
$\xymatrix{
(\mathsf L \mathsf L')^n \ar[r]^-{\tau_{2n}} &
\mathsf L^n \mathsf L^{\prime n} \ar[r]^-{\lax n \lax n'} &
\mathsf L \mathsf L'}$
\item the natural transformations $\iota\iota':1\to \lax 1\lax 1'$ and
$$
\xymatrix@C=90pt {
(\mathsf L \mathsf L')^K \ar[r]^-{\tau_{2k_1}\cdots \tau_{2k_n}}
\ar@/_1.5pc/[rd]_-{\tau_{2K}} &
\mathsf L^{k_1} \mathsf L^{\prime k_1} \cdots 
\mathsf L^{k_n} \mathsf L^{\prime k_n}
\ar[r]^-{\lax {k_1} \lax {k_1}' \cdots \lax {k_n} \lax {k_n}'}
\ar[d]_-{\tau_{2n}} &
(\mathsf L \mathsf L')^n \ar[d]^-{\tau_{2n}} \\
& \mathsf L^K \mathsf L^{\prime K}
\ar@/_1.5pc/[rd]_-{\lax {K} \lax {K}'}
\ar@{}[rd]|-{\qquad \ \ \Longdownarrow {\Phi_{k_1,\dots ,k_n}\Phi'_{k_1,\dots ,k_n}}}
\ar[r]^-{\lax {k_1} \cdots \lax {k_n}\lax {k_1}' \cdots \lax {k_n}'} &
\mathsf L^n \mathsf L^{\prime n} 
\ar[d]^-{\lax n \lax n'} \\
& & \mathsf L \mathsf L'}
$$
where the unlabelled regions denote identity natural transformations.
\end{itemize}
\end{proposition}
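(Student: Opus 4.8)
The plan is to verify Definition~\ref{def:lax+} for the proposed data on $\mathsf L\mathsf L'$ by reducing every required coherence diagram to the already-known coherence diagrams for $(\mathsf L,\Lax,\Phi,\iota)$ and $(\mathsf L',\Lax',\Phi',\iota')$ separately, transported across the symmetry isomorphisms $\tau_{np}$. First I would check that the data is well-typed: the functor $\lax n\lax n'\cdot\tau_{2n}\colon(\mathsf L\mathsf L')^n\to\mathsf L\mathsf L'$ is just the componentwise monoidal product after sorting the $2n$ factors into $n$ copies of $\mathsf L$ followed by $n$ copies of $\mathsf L'$, and similarly the pasting defining $\Phi_{k_1,\dots,k_n}\Phi'_{k_1,\dots,k_n}$ has the right source $\lax {k_1}\cdots\lax{k_n}\lax{k_1}'\cdots\lax{k_n}'\cdot(\tau_{2k_1}\cdots\tau_{2k_n})$ and the right target $\lax K\lax K'\cdot\tau_{2K}$, using $\tau_{2n}\cdot(\tau_{2k_1}\cdots\tau_{2k_n})=\tau_{2K}$ from the last identity in \eqref{eq:tau} (with the $k_i$ there taken to be the pairs summing appropriately) together with $\tau_{2n}\cdot\tau_{2n}=1$ from the first identity. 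This bookkeeping is what makes the unlabelled regions in the displayed pasting legitimately identities.

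Next I would establish the two coherence diagrams of Definition~\ref{def:lax+}. For the associativity (pentagon-type) diagram with data $n$, $\{m_i\}$, $\{k_{ij}\}$, the strategy is: all natural transformations involved are horizontal composites (Godement products over the Cartesian product) of an $\mathsf L$-part with an $\mathsf L'$-part, conjugated by appropriate $\tau$'s; since the $\mathsf L$-part of the top-then-right composite equals the $\mathsf L$-part of the left-then-bottom composite \emph{by the associativity diagram for $(\mathsf L,\Lax,\Phi,\iota)$}, and likewise for the primed part by the associativity diagram for $(\mathsf L',\Lax',\Phi',\iota')$, the two composites on $\mathsf L\mathsf L'$ agree once we know the conjugating $\tau$'s on both sides are the same 2-natural transformation. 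That last point is exactly a $\tau$-identity: the relevant symmetry component is determined by the shape of the index data, and both routes around the square produce the same permutation of factors, which is precisely the content of the cocycle-type relation $\tau_{np}\cdot(\tau_{nk_1}\cdots\tau_{nk_p})=\tau_{n\sum k_i}$ in \eqref{eq:tau}, applied with the roles suitably instantiated. The unitality diagram is handled the same way, now using $\tau_{n1}=1$ (so that $\iota\iota'$ slots in cleanly) together with the unitality diagrams for $\mathsf L$ and for $\mathsf L'$.

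The step I expect to be the main obstacle is the combinatorial/2-categorical bookkeeping: pasting diagrams built from $\Lax$, $\Lax'$ and the $\tau$'s do not visibly factor as $(\text{an }\mathsf L\text{-diagram})\times(\text{an }\mathsf L'\text{-diagram})$ until one has moved all the $\tau$'s into a normal position, and one must be careful that the interchange law for the Godement product is being used correctly when sliding $\tau$'s past horizontal composites of the $\Phi$'s. Concretely, the work is to show that conjugation by the family $\{\tau_{2k}\}$ carries the pair of coherence data on $\mathsf L$ and $\mathsf L'$ to the claimed coherence data on $\mathsf L\mathsf L'$, which amounts to repeatedly invoking 2-naturality of $\tau$ (so $\tau$ commutes past any functor or natural transformation of the form $\lax m\lax m'$) and the three relations in \eqref{eq:tau}. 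Once this normalization is in place, the commutativity of both required diagrams follows formally from the corresponding diagrams for the two factors, and normality (resp.\ strict normality) of $\mathsf L\mathsf L'$ is immediate since $\iota\iota'$ is invertible (resp.\ the identity) whenever both $\iota$ and $\iota'$ are.
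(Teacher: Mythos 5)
Your proposal is correct and is essentially the paper's (omitted) argument: the paper's proof simply leaves the verification of the axioms of Definition~\ref{def:lax+} to the reader, and the intended check is exactly your reduction to the factorwise coherence diagrams via 2-naturality of the symmetry and the relations \eqref{eq:tau}. One small slip: the cancellation you want from the first relation is $\tau_{2n}\cdot\tau_{n2}=1$, not $\tau_{2n}\cdot\tau_{2n}=1$ (and it is not really needed for the source/target bookkeeping), but this does not affect the argument.
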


\begin{proof}
It is left to the reader to check that the stated datum satisfies the axioms in Definition \ref{def:lax+} (analogously to Example (6) on page 80 of \cite{DayStreet:lax}). 
\end{proof}

\begin{definition}
\label{def:lax+-functor}
A {\em $\mathsf{Lax}^+$-monoidal functor} consists of
\begin{itemize}
\item a functor $G:\mathsf L \to \mathsf L'$
\item for all positive integers $n$, a natural transformation
$\Gamma_n:G\cdot \lax n \to \lax n'\cdot (G\cdots G)$
\end{itemize}
such that for all sequences of positive integers $\{k_1,\dots,k_n\}$ the following diagrams commute.
$$
\xymatrix@R=67pt{
G \ar[rd]_-{\iota' \cdot 1} \ar[r]^-{1\cdot \iota} &
G \cdot  \lax 1 \ar[d]^-{\Gamma_1} \\
& \lax 1' \cdot G}
\qquad
\xymatrix@C=45pt@R=20pt{
G\cdot \lax n \cdot (\raisebox{-1.5pt}{$\lax{k_{1}} \cdots \lax{k_{n}}$})
\ar[d]_-{\Gamma_n\cdot 1}
\ar[r]^-{1\cdot \Phi_{k_1,\dots,k_n}} &
G \cdot \raisebox{-1.5pt}{$\lax{K}$} 
\ar[dd]^-{\Gamma_{K}}  \\
\lax n'\cdot (G \cdots G) \cdot (\raisebox{-1.5pt}{$\lax{k_{1}} \cdots \lax{k_{n}}$})
\ar[d]_-{1\cdot (\Gamma_{k_1} \cdots \Gamma_{k_n})}  \\
\lax n'\cdot (\raisebox{-1.5pt}{$\lax{k_{1}}' \cdots \lax{k_{n}}'$})
\cdot (G \cdots G)
\ar[r]_-{\Phi'_{k_1,\dots,k_n}\cdot 1} &
\raisebox{-1.5pt}{$\lax{K}'$} \cdot (G \cdots G) }
$$
\end{definition}

\begin{proposition}
\label{prop:comp_functor}
\begin{itemize}
\item[{(1)}]
The composite of (composable) $\mathsf{Lax}^+$-monoidal functors $(G,\Gamma)$ and $(H,\Xi)$ is again $\mathsf{Lax}^+$-monoidal via the natural transformations
$$
\xymatrix{
H\cdot G \cdot \lax n \ar[r]^-{1\cdot \Gamma_n} &
H\cdot  \lax n \cdot (G \cdots G) \ar[r]^-{\Xi_n\cdot 1} &
\lax n \cdot (H\cdots H) \cdot (G \cdots G)
=\lax n \cdot (H\cdot G)^n.}
$$
\item[{(2)}]
The Cartesian product of $\mathsf{Lax}^+$-monoidal functors $(G,\Gamma)$ and $(H,\Xi)$ is again $\mathsf{Lax}^+$-monoidal via the natural transformations
$$
\xymatrix@C=60pt{
(\mathsf{LN})^n \ar[r]^-{\tau_{2n}} \ar[d]_-{(GH)^n} &
\mathsf{L}^n \mathsf{N}^n \ar[r]^-{\lax n \lax n} \ar[d]_-{G^nH^n}
\ar@{}[rd]|-{\Longdownarrow {\Gamma_n\Xi_n}}  &
\mathsf{LN} \ar[d]^-{GH} \\
(\mathsf{L'N'})^n \ar[r]_-{\tau_{2n}}  &
\mathsf{L'}^n \mathsf{N'}^n \ar[r]_-{\lax n' \lax n'} &
\mathsf{L'N'}}
$$
where the unlabelled region denotes the identity natural transformation.
\end{itemize}
\end{proposition}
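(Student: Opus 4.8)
The plan is to verify the two families of axioms in Definition~\ref{def:lax+} for the claimed data on $\mathsf{LN}$, exploiting the fact that both summands $(G,\Gamma)$ and $(H,\Xi)$ already satisfy them and that the symmetry cells $\tau_{np}$ behave coherently under the operations appearing in the construction of $\mathsf L\mathsf L'$ in Proposition~\ref{prop:Cartesian+}. For part~(1), I would paste the composite natural transformation $1\cdot\Gamma_n$ followed by $\Xi_n\cdot 1$ into each of the two pasting diagrams of Definition~\ref{def:lax+-functor} and subdivide the resulting diagram into regions, each of which is either an instance of the corresponding axiom for $(G,\Gamma)$, an instance for $(H,\Xi)$, or a naturality square of one of the $\Xi_k$ with respect to a $\Gamma$ (or vice versa). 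The key point is that the two whiskerings $1\cdot(-)$ and $(-)\cdot 1$ commute with each other up to the interchange law in $\mathsf{Cat}$, so the order in which we apply the axiom for $G$ and the axiom for $H$ can be swapped at the cost of a naturality square; this is the usual argument that a composite of (op)lax monoidal functors is again one, and it goes through verbatim here because the extra bookkeeping is only in the arity labels $k_1,\dots,k_n$ and $K$, which match on both sides.

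For part~(2), I would first record the compatibility of the symmetry cells $\tau$ with Cartesian products of functors: for functors $P\colon\mathsf A\to\mathsf A'$ and $Q\colon\mathsf B\to\mathsf B'$ one has $\tau_{2n}\cdot(PQ)^n=(P^nQ^n)\cdot\tau_{2n}$ as 2-cells, simply because $\tau$ is 2-natural (this is asserted in the first Claim of Section~\ref{sec:preli}). Using this, the square defining the structure cell of $GH$ on $\mathsf{LN}$ is literally the Godement/Cartesian product $\Gamma_n\Xi_n$ conjugated by $\tau_{2n}$'s. Then both axioms of Definition~\ref{def:lax+-functor} for $(GH,\Gamma H\Xi)$ reduce, after stripping off the $\tau$'s using 2-naturality and the relations~\eqref{eq:tau} ($\tau_{np}\cdot\tau_{pn}=1$, $\tau_{n1}=1$, and the ``associativity'' relation $\tau_{np}\cdot(\tau_{nk_1}\cdots\tau_{nk_p})=\tau_{n\sum k_i}$), to the Cartesian product of the corresponding axiom for $(G,\Gamma)$ with that for $(H,\Xi)$. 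Since the Cartesian product of two commuting diagrams in $\mathsf{Cat}$ is again commuting, this finishes part~(2).

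I would organize the write-up by first doing part~(2), because the only subtlety there is the pattern of $\tau$-cancellations, and this pattern is exactly the one already used in the proof of Proposition~\ref{prop:Cartesian+} (whose details were left to the reader, with a reference to \cite{DayStreet:lax}); it is then reasonable to again delegate the routine diagram chase, noting which relations of~\eqref{eq:tau} are invoked. For part~(1) I would spell out the subdivision of the hexagon-shaped axiom into the two pentagon-type axioms for $G$ and $H$ together with one interchange square, since that is the conceptual heart and is short.

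The main obstacle I expect is purely notational rather than mathematical: keeping the whiskering conventions straight when several of the $(n$-fold$)$ functors are stacked, and making sure that in part~(2) the arity relabelling hidden inside $\tau_{2k_1}\cdots\tau_{2k_n}$ versus $\tau_{2K}$ is reconciled using the third relation in~\eqref{eq:tau} with $p=n$ and the $k_i$ as written. Once that identification is made explicit, no genuine difficulty remains, so — consistently with the style of the surrounding results — I would state the reductions precisely and leave the final bookkeeping verification to the reader.
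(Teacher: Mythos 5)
Your proposal is correct and follows essentially the route the paper intends: the paper's own proof simply leaves the verification of the axioms of Definition \ref{def:lax+-functor} to the reader, and your outline supplies exactly that routine check, correctly isolating the single interchange/naturality square needed in part (1) and the $\tau$-cancellations via \eqref{eq:tau} (with $\tau_{2n}\cdot(\tau_{2k_1}\cdots\tau_{2k_n})=\tau_{2K}$) needed in part (2).
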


\begin{proof}
It is left to the reader to check that the stated data satisfy the axioms in Definition \ref{def:lax+-functor}.
\end{proof}

\begin{definition}
\label{def:lax+-nattr}
A {\em $\mathsf{Lax}^+$-monoidal natural transformation} $(G,\Gamma) \to (G',\Gamma')$ is a natural transformation $\omega:G \to G'$ for which the following diagram commutes for all positive integers $n$.
$$
\xymatrix@C=45pt{
G\cdot \lax n  \ar[r]^-{\omega \cdot 1} \ar[d]_-{\Gamma_n} &
G'\cdot \lax n \ar[d]^-{\Gamma'_n} \\
\lax n'\cdot (G \cdots G) \ar[r]_-{1\cdot (\omega \cdots \omega)} &
\lax n'\cdot (G'\cdots G')}
$$
\end{definition}

\begin{proposition}
\label{prop:comp_nattr}
All of the composites, the Godement products, and the Cartesian products of (composable) $\mathsf{Lax}^+$-monoidal natural transformations are again $\mathsf{Lax}^+$-monoidal. Consequently, there is a strict monoidal 2-category $\mathsf{Lax}^+$ of $\mathsf{Lax}^+$-monoidal categories, $\mathsf{Lax}^+$-monoidal functors and $\mathsf{Lax}^+$-monoidal natural transformations.
\end{proposition}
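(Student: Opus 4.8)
The plan is to verify the closure properties one at a time, in the order: composites, Godement products, Cartesian products; and then to assemble the 2-category. Since Definition~\ref{def:lax+-nattr} imposes just a single family of commuting squares indexed by positive integers $n$, each verification amounts to a short diagram chase. For the \textbf{composite} $\omega'\cdot\omega$ of $\mathsf{Lax}^+$-monoidal natural transformations $\omega:(G,\Gamma)\to(G',\Gamma')$ and $\omega':(G',\Gamma')\to(G'',\Gamma'')$, I would stack the defining square for $\omega$ on top of that for $\omega'$; the left column composes to $\Gamma_n$, the right column to $\Gamma''_n$, the top row to $(\omega'\cdot\omega)\cdot 1$, and the bottom row to $1\cdot((\omega'\cdot\omega)\cdots(\omega'\cdot\omega))$ using functoriality of $\lax n'$ and interchange. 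For the \textbf{Godement product} $\omega\ast\upsilon$ of $\omega:(G,\Gamma)\to(G',\Gamma')$ with $\upsilon:(H,\Xi)\to(H',\Xi')$ (with $G,G':\mathsf L\to\mathsf L'$ and $H,H':\mathsf L'\to\mathsf L''$, and composite structure as in Proposition~\ref{prop:comp_functor}(1)), I would paste the three small squares obtained from whiskering: $H$ applied to the square for $\omega$, the naturality square of $\upsilon$ against $\lax n'$, and the square for $\upsilon$ whiskered by $(G\cdots G)$; the outer rectangle is exactly the square demanded of $\omega\ast\upsilon$. For the \textbf{Cartesian product} $\omega\upsilon$, the required square factors through the flip functors $\tau_{2n}$ (which are 2-natural, so they commute with everything in sight) and then reduces to the product of the squares for $\omega$ and for $\upsilon$.

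Once these three stability results are in hand, the 2-category $\mathsf{Lax}^+$ is assembled formally. Objects are $\mathsf{Lax}^+$-monoidal categories; $1$-cells are $\mathsf{Lax}^+$-monoidal functors, composed as in Proposition~\ref{prop:comp_functor}(1); $2$-cells are $\mathsf{Lax}^+$-monoidal natural transformations, with vertical composition the ordinary composite and horizontal composition the Godement product — both well defined by what we have just checked. The 2-category axioms (associativity and unitality of both compositions, the interchange law, functoriality of identities) all hold because they hold in the ambient 2-category $\mathsf{Cat}$ and the $\mathsf{Lax}^+$-structure morphisms $\Gamma$ only ride along; one must observe that the identity $\mathsf{Lax}^+$-monoidal functor on $(\mathsf L,\Lax,\Phi,\iota)$ has $\Gamma_n=1$, and that this is a strict unit for the composition formula in Proposition~\ref{prop:comp_functor}(1). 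The strict monoidal structure on $\mathsf{Lax}^+$ is then given on objects, $1$-cells and $2$-cells by the Cartesian products of Propositions~\ref{prop:Cartesian+}, \ref{prop:comp_functor}(2) and the present one, with monoidal unit the terminal category $\mathbbm 1$ equipped with its unique (trivial) $\mathsf{Lax}^+$-monoidal structure; strictness and functoriality of $\times$ are inherited from the strict symmetric monoidal structure of $\mathsf{Cat}$, the only point to note being that $\tau_{2n}$ and its variants satisfy the coherence identities \eqref{eq:tau}, which is exactly what makes the composite of the Cartesian-product structure morphisms associate on the nose.

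The main obstacle is bookkeeping rather than conceptual: in the Godement-product and Cartesian-product cases one must be scrupulous about the order in which functors are whiskered and about inserting the $\tau$'s in the right places, since the $\mathsf{Lax}^+$-structure of a composite or product functor is itself a pasting of several cells (Proposition~\ref{prop:comp_functor}). In practice each verification is a rectangle tiled by three or four naturality/interchange squares, and the only thing that can go wrong is a transposition of indices; the identities \eqref{eq:tau} are precisely the lemma that resolves any such ambiguity. For this reason the paper relegates the routine check to the reader, and I would do likewise, merely indicating the pasting diagrams above.
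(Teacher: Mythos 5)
Your verification is correct and follows exactly the route the paper intends: the paper itself leaves the check of the square in Definition~\ref{def:lax+-nattr} to the reader, and your pasting arguments (whiskered defining squares plus interchange, with the Cartesian case reduced through the 2-natural $\tau_{2n}$ and the identities \eqref{eq:tau}) together with the observation that the composition of Proposition~\ref{prop:comp_functor}(1) is strictly associative and unital are precisely that check. The only quibble is cosmetic: in the composite case the two squares are pasted side by side along the shared edge $\Gamma'_n$ rather than "stacked," but your identification of the outer rectangle is right.
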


\begin{proof}
It is left to the reader to check that the diagram of Definition \ref{def:lax+-nattr} commutes in all of the stated cases.
\end{proof}

\begin{definition} \label{def:cosemigroup}
A {\em cosemigroup} in a $\mathsf{Lax}^+$-monoidal category $(\mathsf L,\Lax,\Phi,\iota)$ consists of 
\begin{itemize}
\item an object $a$ of $\mathsf L$ and 
\item a morphism $\delta:a \to a \Lax a$
\end{itemize}
rendering commutative the first (coassociativity) diagram below.
A {\em morphism of cosemigroups} $(a,\delta) \to (a',\delta')$ is a morphism $\phi:a \to a'$ rendering commutative the second diagram.
$$
\xymatrix{
a\Lax a \ar[d]_-{\delta \bullet \iota} &
a \ar[l]_-\delta \ar[r]^-\delta &
a\Lax a \ar[d]^-{\iota \bullet \delta} \\
(a\Lax a) \Lax \lb a \rb \ar[r]_-{\Phi_{2,1}} &
a\Lax a \Lax a &
\lb a\rb\Lax (a\Lax a) \ar[l]^-{\Phi_{1,2}} }
\qquad 
\xymatrix{
a \ar[r]^-\delta \ar[d]_-\phi &
a \Lax a \ar[d]^-{\phi \bullet \phi} \\
a' \ar[r]_-{\delta'} &
a' \Lax a'}
$$
\end{definition}

\begin{theorem}
\label{thm:cosemigroup}
For any $\mathsf{Lax}^+$-monoidal category $(\mathsf L,\Lax,\Phi,\iota)$, the following categories are isomorphic.
\begin{itemize}
\item[{(i)}]
The category of cosemigroups and their morphisms in $(\mathsf L,\Lax,\Phi,\iota)$.
\item[{(ii)}]
The category of $\mathsf{Lax}^+$-monoidal functors $(\mathbbm 1,1,1,1) \to (\mathsf L,\Lax,\Phi,\iota)$ and their $\mathsf{Lax}^+$-monoidal natural transformations. 
\end{itemize}
Consequently, ${\sf Lax}^+$-monoidal functors preserve cosemigroups.
\end{theorem}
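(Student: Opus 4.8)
The plan is to exhibit an explicit bijection on objects and on morphisms between the two categories, and then check that it is functorial and invertible. On objects: given a $\mathsf{Lax}^+$-monoidal functor $(G,\Gamma):(\mathbbm 1,1,1,1)\to(\mathsf L,\Lax,\Phi,\iota)$, the functor $G$ is determined by the single object $a:=G(\ast)$ of $\mathsf L$ (where $\ast$ is the unique object of $\mathbbm 1$), and the natural transformation $\Gamma_2:G\cdot\lax 2\to\lax 2\cdot(GG)$ has a single component, which is a morphism $\delta:=\Gamma_2:a\to a\Lax a$ (using that $G\cdot\lax 2$ sends $\ast$ to $a$ since $\lax 2$ on $\mathbbm 1^2=\mathbbm 1$ is trivial, and $\lax 2\cdot(GG)$ sends it to $a\Lax a$). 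Conversely, a pair $(a,\delta)$ determines $G$ (constant at $a$) and a family $\{\Gamma_n\}_{n\geq 1}$: set $\Gamma_1:=\iota_a$ (or rather the composite $a\xrightarrow{1\cdot\iota}a\Lax$ forced by the first axiom of Definition~\ref{def:lax+-functor}, which since the target is $(\mathbbm 1,1,1,1)$ forces $\Gamma_1$ to be $\iota_a:a\to\lb a\rb$ up to the coherence already present), and for $n\geq 2$ define $\Gamma_n$ inductively by iterating $\delta$ and inserting the $\Phi$'s — concretely $\Gamma_n$ is the unique morphism $a\to a\Lax\cdots\Lax a$ ($n$ factors) built from $\delta$ that the second axiom of Definition~\ref{def:lax+-functor} forces, e.g.\ $\Gamma_n$ equals $\Phi_{2,1,\dots,1}^{-1}$-free composite $\Phi$-twisted iterate of $\delta$.

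The first key step is to show this $\{\Gamma_n\}$ is well defined, i.e.\ that the inductive recipe is consistent: the second diagram of Definition~\ref{def:lax+-functor}, instantiated with the source category $\mathbbm 1$ so that its $\Phi$ and $\iota$ are trivial, becomes exactly a relation expressing $\Gamma_K$ in terms of $\Gamma_n$ and the $\Gamma_{k_i}$ together with $\Phi_{k_1,\dots,k_n}$; one must check that the two ways of breaking $n$ into pieces (say via $(2,1,\dots,1)$ versus $(1,\dots,1,2)$) agree, and this is precisely where the coassociativity diagram of Definition~\ref{def:cosemigroup} — the one involving $\Phi_{2,1}$, $\Phi_{1,2}$, $\delta\bullet\iota$, $\iota\bullet\delta$ — is needed, combined with the associativity (hexagon) and unit axioms of $\Phi,\iota$ from Definition~\ref{def:lax+}. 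So I would first prove: the recipe $(a,\delta)\mapsto(G,\Gamma)$ lands in $\mathsf{Lax}^+$-monoidal functors \emph{iff} $(a,\delta)$ satisfies coassociativity. The forward direction is the substantive content; the reverse direction (that $\Gamma_2$ of a $\mathsf{Lax}^+$-monoidal functor out of $\mathbbm 1$ is automatically coassociative) is the specialization of the $n=3$ instance of the second axiom.

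Next, on morphisms: a $\mathsf{Lax}^+$-monoidal natural transformation $\omega:(G,\Gamma)\to(G',\Gamma')$ between functors out of $\mathbbm 1$ has a single component $\phi:=\omega_\ast:a\to a'$, and the defining diagram of Definition~\ref{def:lax+-nattr} at $n=2$ is exactly the morphism-of-cosemigroups square $\delta'\cdot\phi=(\phi\bullet\phi)\cdot\delta$; the instances for $n\neq 2$ are then automatically implied (for $n=1$ by naturality of $\iota$, for $n\geq 3$ by the inductive construction of $\Gamma_n$ together with the $n=2$ case — this is a short induction using the $\Phi$-compatibility). Conversely any morphism of cosemigroups $\phi$ gives such an $\omega$. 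Then I would verify that these assignments are mutually inverse and respect identities and composition — routine, since composition of natural transformations and of morphisms in $\mathsf L$ correspond on the single components. The final sentence, that $\mathsf{Lax}^+$-monoidal functors preserve cosemigroups, is then immediate: given a cosemigroup $(a,\delta)$ in $\mathsf L$, view it as a $\mathsf{Lax}^+$-monoidal functor $\mathbbm 1\to\mathsf L$ by (i)$\Leftrightarrow$(ii), postcompose with the given $\mathsf{Lax}^+$-monoidal functor $\mathsf L\to\mathsf L'$ using Proposition~\ref{prop:comp_functor}(1), and apply (ii)$\Leftrightarrow$(i) again over $\mathsf L'$; unwinding, the resulting cosemigroup is $(Ga,\,\Gamma_2\cdot G\delta)$.

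The main obstacle I anticipate is the well-definedness/consistency of $\{\Gamma_n\}_{n\geq 2}$ from a single $\delta$: one must pin down a canonical normal form for the iterated "$\Phi$-twisted powers" of $\delta$ and show every composite prescribed by the functor axioms reduces to it, which is a coherence argument of exactly the flavor handled by Leinster's unbiased monoidal coherence but here has to be done by hand in the lax $^+$ setting. I expect this to be the only place where real work happens; everything else (single-component bookkeeping for morphisms, functoriality, the final preservation corollary) is formal.
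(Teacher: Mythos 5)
Your proposal takes essentially the same route as the paper's own proof: identify a $\mathsf{Lax}^+$-monoidal functor $(\mathbbm 1,1,1,1)\to(\mathsf L,\Lax,\Phi,\iota)$ with an object $a$ together with morphisms $\delta_n$ satisfying $\delta_1=\iota$ and the family of coherence squares, extract $\delta=\delta_2$, rebuild the higher $\delta_n$ iteratively from $\delta$ using $\Phi_{1,i}$, and establish the full family of squares by induction (the paper does precisely the consistency check you flag, first showing the left- and right-iterated forms of $\delta_{i+1}$ agree via coassociativity and then inducting on $K$ with two kinds of step), with the single-component treatment of morphisms ($n=2$ giving the cosemigroup-morphism square, $n=1$ by naturality of $\iota$, $n\geq 3$ by induction) and the preservation corollary via composition of $\mathsf{Lax}^+$-monoidal functors exactly as you describe. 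The proposal is correct and matches the paper's argument.
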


\begin{proof}
A $\mathsf{Lax}^+$-monoidal functor $(\mathbbm 1,1,1,1) \to (\mathsf L,\Lax,\Phi,\iota)$ as in part (ii) consists of 
\begin{itemize}
\item
an object $a$ of $\mathsf L$
\item for all positive integers $n$, a
morphism $\delta_n$ from $a$ to the $n$-fold product $a \Lax \cdots \Lax a$ 
\end{itemize}
such that $\delta_1=\iota$ and for all sequences of positive integers $\{k_1,\dots,k_n\}$ the following diagram commutes.
\begin{equation}
\label{eq:n-coass}
\xymatrix@C=60pt{
a \ar[r]^-{\delta_n} \ar[d]_-{\delta_K} &
a\Lax \cdots \Lax a \ar[d]^-{\delta_{k_1}{\bullet} \cdots {\bullet} \delta_{k_n}} \\
a\Lax \cdots \Lax a &
\ar[l]^-{\Phi_{k_1,\dots,k_n}}
(a\Lax \cdots \Lax a) \Lax \cdots \Lax (a\Lax \cdots \Lax a)}
\end{equation}
Comparing \eqref{eq:n-coass} at $n=2$, $k_1=2$, $k_2=1$ and $n=2$, $k_1=1$, $k_2=2$, we see that $(a,\delta_2)$ is a cosemigroup.

Conversely, starting with a cosemigroup $(a,\delta)$, we put $\delta_1:=\iota$, $\delta_2:=\delta$ and for $i >1$ iteratively 
\begin{equation}\label{eq:delta_k+1}
\delta_{i +1}:=(
\xymatrix{
a \ar[r]^-\delta &
a\bullet a \ar[r]^-{\iota \bullet \delta_i } &
\lb a \rb \Lax (a\Lax \cdots \Lax a) \ar[r]^-{\Phi_{1,i}} &
a\Lax \cdots \Lax a} ).
\end{equation}
Then the coassociativity of the cosemigroup tells exactly that $\delta_3$ equals
$$
\xymatrix{
a \ar[r]^-{\delta} &
a\Lax a 
\ar[r]^-{\delta \bullet \iota} &
(a \Lax a) \Lax \lb a \rb 
\ar[r]^-{\Phi_{2,1}} &
a\Lax a \Lax a.} 
$$
By induction on $i$, one proves that the morphism of \eqref{eq:delta_k+1} is equal to
\begin{equation} \label{eq:delta_alt}
\xymatrix@C=20pt{
a \ar[r]^-{\delta_i} &
a\Lax \cdots \Lax a 
\ar[rr]^-{\delta \bullet \iota \bullet \cdots \bullet \iota} &&
(a \Lax a) \Lax \lb a \rb \Lax \cdots \Lax \lb a \rb
\ar[rr]^-{\Phi_{2,1,\dots,1}} &&
a\Lax \cdots \Lax a}
\end{equation}
for all $i>2$.

Now we check, by induction on $K$, that the so defined family of morphisms $\{\delta_i\}_{i\in \mathbb N^+}$ renders commutative \eqref{eq:n-coass} for all sequences of positive integers $\{k_1,\dots,k_n\}$. For $K=1$ both $n$ and $k_{1}$ must be equal to $1$ and thus \eqref{eq:n-coass} commutes by one of the unitality axioms in Definition \ref{def:lax+}.
There are two kinds of induction step:
\begin{itemize}
\item replacing $\{k_1,\dots,k_n\}$ with $\{1,k_1,\dots,k_n\}$ and
\item replacing $\{k_1,\dots,k_n\}$ with $\{1+k_1,\dots,k_n\}$.
\end{itemize}
Assuming that \eqref{eq:n-coass} commutes for the original sequence $\{k_1,\dots,k_n\}$, it is seen to commute for the modified sequence using the $\mathsf{Lax}^+$-monoidal category axioms in Definition \ref{def:lax+} together with \eqref{eq:delta_k+1} in the first case and \eqref{eq:delta_alt} in the second case.

From the constructions, it is obvious that the cosemigroup constructed from the ${\sf Lax}^+$-monoidal functor induced by a cosemigroup is exactly the initial cosemigroup.
In order to see triviality of the composite of the above constructions in the opposite order, note that \eqref{eq:n-coass} for $n=2$, $k_1=1$, and $k_2=i$ yields \eqref{eq:delta_k+1}.

A $\mathsf{Lax}^+$-monoidal natural transformation $(a,\{\delta_i\}_{i\in \mathbb N^+}) \to (a',\{\delta'_i\}_{i\in \mathbb N^+})$ is a morphism $f:a\to a'$ such that for all positive integers $i$,
\begin{equation}
\label{eq:cosgr_mor}
\xymatrix@C=50pt{
a\ar[r]^-{\delta_i} \ar[d]_-f &
a\Lax \cdots \Lax a \ar[d]^-{f\bullet \cdots \bullet f} \\
a'\ar[r]_-{\delta'_i} &
a'\Lax \cdots \Lax a'}
\end{equation}
commutes. Such a morphism $f$ is clearly a cosemigroup morphism $(a,\delta_2) \to (a',\delta'_2)$.

Conversely, for a cosemigroup morphism $f:(a,\delta) \to (a',\delta')$, the diagram of \eqref{eq:cosgr_mor} commutes for $i=1$ by the naturality of $\delta_1=\iota =\delta'_1$. For the morphisms \eqref{eq:delta_k+1} for $i>1$, commutativity of \eqref{eq:cosgr_mor} is easily checked by induction on $i$.
\end{proof}

We close this section by introducing a natural notion of comodule over a cosemigroup in the sense of Definition \ref{def:cosemigroup}, without entering its deeper analysis.

\begin{definition} \label{def:comodule}
A {\em comodule} of a cosemigroup $(a,\delta)$ in a $\mathsf{Lax}^+$-monoidal category $(\mathsf L,\Lax,\Phi,\iota)$ consists of
\begin{itemize}
\item an object $x$ of $\mathsf L$
\item a morphism $\varrho: x \to x \Lax a$ (the {\em coaction})
\end{itemize}
such that the (coassociativity) diagram on the left of
$$
\xymatrix{
x \Lax a  \ar[d]_-{\iota \bullet \delta} & 
\ar[l]_-\varrho  x \ar[r]^\varrho & 
x \Lax a \ar[d]^-{\varrho \bullet \iota} \\
\lb x \rb \Lax (a \Lax a) \ar[r]_-{\Phi_{1,2}} &
x \Lax a \Lax a 
& \ar[l]^-{\Phi_{2,1}} (x \Lax a) \Lax \lb a \rb }
\qquad
\xymatrix{
x \ar[d]_-\theta \ar[r]^-\varrho &
x \Lax a  \ar[d]^-{\theta  \bullet 1}  \\
x' \ar[r]_-{\varrho'}  & 
x' \Lax a}
$$
commutes.
A {\em morphism of comodules} $(x,\varrho) \to (x',\varrho')$ is a morphism $\theta:x \to x'$ in $\mathsf L$ such that the diagram on the right above commutes.
\end{definition}

\begin{example} \label{ex:reg_comod}
Comparing the coassociativity conditions of Definition \ref{def:comonoid} and Definition \ref{def:comodule}, it becomes obvious that $(a,\delta)$ is a comodule over an arbitrary cosemigroup $(a,\delta)$ in any $\mathsf{Lax}^+$-monoidal category.

Notice, however, that it is not clear if the forgetful functor, from the category of comodules over a cosemigroup $(a,\delta)$ in some $\mathsf{Lax}^+$-monoidal category $(\mathsf L,\Lax)$ to $\mathsf L$, is comonadic. In particular, there is no obvious candidate for its right adjoint. Indeed, the usual construction of `free' comodules does not seem to work: for an arbitrary object $x$ of $\mathsf L$, the comultiplication $\delta$ does not seem to induce a coaction on $x\Lax a$.
\end{example} 

\section{Semigroups in $\mathsf{Oplax}_+$-monoidal categories}
\label{sec:semigroup}

Dually to the previous section, here we introduce semigroups in monoidal categories in which monoidal products of positive number of factors are available together with oplax coherence morphisms.

\begin{definition}
\label{def:oplax+}
An {\em $\mathsf{Oplax}_+$-monoidal category} consists of 
\begin{itemize}
\item a category $\mathsf R$
\item for all positive integers $n$, a functor $\oplax{n}:{\mathsf R}^{n} \to \mathsf R$ 
\item for all positive integers $n,k_1,\dots,k_n$, natural transformations
$$
\xymatrix@C=80pt{
{\mathsf R}^K
\ar[r]^-{\oplax {k_1} \ \cdots \ \oplax{k_n}}
\ar@/_1.6pc/[rd]_-{\oplax {K}}
\ar@{}[rd]|-{\qquad \Longuparrow {\Psi_{k_1,\dots,k_n}}} &
{\mathsf R}^n \ar[d]^-{\oplax n}  \\
& {\mathsf R}}
\qquad\qquad
\raisebox{-17pt}{$
\xymatrix@C=60pt{
{\mathsf L} \ar@{=}@/^1.7pc/[r] \ar@/_1.7pc/[r]_-{\oplax 1}
\ar@{}[r]|-{\Longuparrow \upsilon} &
\mathsf L}$}
$$
\end{itemize}
satisfying the so-called coassociativity and counitality axioms encoded in the diagrams which are obtained from the diagrams of Definition \ref{def:lax+} by reversing the arrows.
\end{definition}

Again, Definition \ref{def:oplax+} can be extended in a straightforward way to define $\mathsf{Oplax}_+$-monoids in 
any Gray monoid --- so in particular in any strict monoidal 2-category.

$(\mathsf R,\OpLax,\Psi,\upsilon)$ is an $\mathsf{Oplax}_+$-monoidal category if and only if $(\mathsf R^{\mathsf{op}},\OpLax^{\mathsf{op}},\Psi^{\mathsf{op}},\upsilon^{\mathsf{op}})$ is a $\mathsf{Lax}^+$-monoidal category.

Throughout, in an $\mathsf{Oplax}_+$-monoidal category we denote by $\lw a \rw$ the image of any object $a$ under the functor $\oplax 1: \mathsf R \to \mathsf R$; and we denote by $a_1 \OpLax \cdots \OpLax a_n$ the image of an object $(a_1,\dots,a_n)$ under the functor $\oplax n:\mathsf R^n \to \mathsf R$.

Dually to Proposition \ref{prop:Cartesian+}, the Cartesian product of $\mathsf{Oplax}_+$-monoidal categories carries a canonical $\mathsf{Oplax}_+$-monoidal structure.

\begin{definition}
\label{def:loplax+-functor}
An {\em $\mathsf{Oplax}_+$-monoidal functor} consists of
\begin{itemize}
\item a functor $G:\mathsf R \to \mathsf R'$
\item for all positive integers $n$, a natural transformation
$\oplax n'\cdot (G\cdots G) \to G\cdot \oplax n$
\end{itemize}
such that the diagrams of Definition \ref{def:lax+-functor} with reversed arrows commute.
\end{definition}

Symmetrically to Proposition \ref{prop:comp_functor},
the composites and the Cartesian products of (composable) $\mathsf{Oplax}_+$-monoidal functors are again $\mathsf{Oplax}_+$-monoidal via the evident natural transformations.

\begin{definition}
\label{def:oplax2-nattr} 
An {\em $\mathsf{Oplax}_+$-monoidal natural transformation} is a natural transformation for which the diagrams of Definition \ref{def:lax+-nattr} with reversed arrows commute.
\end{definition}

Symmetrically to Proposition \ref{prop:comp_nattr},
all of the composites, the Godement products, and the Cartesian products of (composable) $\mathsf{Oplax}_+$-monoidal natural transformations are again $\mathsf{Oplax}_+$-monoidal. Consequently, there is a strict monoidal 2-category $\mathsf{Oplax}_+$ of $\mathsf{Oplax}_+$-monoidal categories, $\mathsf{Oplax}_+$-monoidal functors and $\mathsf{Oplax}_+$-monoidal natural transformations.

\begin{definition}
A {\em semigroup} in an $\mathsf{Oplax}_+$-monoidal category $(\mathsf R,\OpLax,\Psi,\upsilon)$ is a cosemigroup in the $\mathsf{Lax}^+$-monoidal category $(\mathsf R^{\mathsf{op}},\OpLax^{\mathsf{op}},\Psi^{\mathsf{op}},\upsilon^{\mathsf{op}})$. That is, it consists of 
\begin{itemize}
\item an object $a$ of $\mathsf R$ and 
\item a morphism $\mu:a \OpLax a \to a$
\end{itemize}
rendering commutative the first (associativity) diagram below.
A {\em morphism of semigroups} is a morphism $\phi$ rendering commutative the second diagram.
$$
\xymatrix{
(a\OpLax a) \OpLax \lw a \rw \ar[d]_-{\mu \circ \upsilon} &
\ar[l]_-{\Psi_{2,1}}a\OpLax a \OpLax a \ar[r]^-{\Psi_{1,2}}&
\lw a \rw \OpLax (a\OpLax a) \ar[d]^-{\upsilon \circ \mu} \\
a\OpLax a  \ar[r]_-\mu &
a  &
a\OpLax a \ar[l]^-\mu   }
\qquad 
\xymatrix{
a \OpLax a \ar[r]^-\mu \ar[d]_-{\phi \circ \phi} &
a \ar[d]^-\phi \\
a' \OpLax a' \ar[r]_-{\mu'} &
a'}
$$
\end{definition}

\begin{theorem}
\label{thm:semigroup}
For any $\mathsf{Oplax}_+$-monoidal category $(\mathsf R,\OpLax,\Psi,\upsilon)$, the following categories are isomorphic.
\begin{itemize}
\item[{(i)}]
The category of semigroups and their morphisms in $(\mathsf R,\OpLax,\Psi,\upsilon)$.
\item[{(ii)}]
The category of $\mathsf{Oplax}_+$-monoidal functors $(\mathbbm 1,1,1,1) \to (\mathsf R,\OpLax,\Psi,\upsilon)$ and their $\mathsf{Oplax}_+$-monoidal natural transformations. 
\end{itemize}
Consequently, $\mathsf{Oplax}_+$-monoidal functors preserve semigroups.
\end{theorem}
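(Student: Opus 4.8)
The plan is to obtain Theorem~\ref{thm:semigroup} as a formal consequence of Theorem~\ref{thm:cosemigroup} by passing to opposite categories, rather than repeating the inductive argument. Everything in the $\mathsf{Oplax}_+$-world has been defined in this section precisely as the $\mathsf{Lax}^+$-notion applied to opposites: an $\mathsf{Oplax}_+$-monoidal category $(\mathsf R,\OpLax,\Psi,\upsilon)$ is the same thing as the $\mathsf{Lax}^+$-monoidal category $(\mathsf R^{\mathsf{op}},\OpLax^{\mathsf{op}},\Psi^{\mathsf{op}},\upsilon^{\mathsf{op}})$; a semigroup in the former is by definition a cosemigroup in the latter; and similarly $\mathsf{Oplax}_+$-monoidal functors and $\mathsf{Oplax}_+$-monoidal natural transformations are defined by the diagrams of Definitions~\ref{def:lax+-functor} and \ref{def:lax+-nattr} with arrows reversed, i.e.\ they are exactly $\mathsf{Lax}^+$-monoidal functors and natural transformations between the opposite $\mathsf{Lax}^+$-monoidal categories. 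So the first step is simply to record these dictionary identifications.

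Next I would apply Theorem~\ref{thm:cosemigroup} to the $\mathsf{Lax}^+$-monoidal category $(\mathsf R^{\mathsf{op}},\OpLax^{\mathsf{op}},\Psi^{\mathsf{op}},\upsilon^{\mathsf{op}})$. That theorem gives an isomorphism between the category of cosemigroups in $(\mathsf R^{\mathsf{op}},\dots)$ and the category of $\mathsf{Lax}^+$-monoidal functors $(\mathbbm 1,1,1,1)\to(\mathsf R^{\mathsf{op}},\dots)$ with their $\mathsf{Lax}^+$-monoidal natural transformations. Here one uses that $(\mathbbm 1,1,1,1)$ is self-opposite, so the unit object of the statement is unchanged. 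Translating both sides back through the dictionary of the previous paragraph: cosemigroups in $(\mathsf R^{\mathsf{op}},\dots)$ are semigroups in $(\mathsf R,\OpLax,\Psi,\upsilon)$, and $\mathsf{Lax}^+$-monoidal functors (resp.\ natural transformations) from $(\mathbbm 1,1,1,1)$ into $(\mathsf R^{\mathsf{op}},\dots)$ are $\mathsf{Oplax}_+$-monoidal functors (resp.\ natural transformations) from $(\mathbbm 1,1,1,1)$ into $(\mathsf R,\OpLax,\Psi,\upsilon)$. Hence (i) and (ii) are isomorphic. The final ``Consequently'' claim follows since a $\mathsf{Oplax}_+$-monoidal functor $(\mathsf R,\dots)\to(\mathsf R',\dots)$ is a $\mathsf{Lax}^+$-monoidal functor $(\mathsf R^{\mathsf{op}},\dots)\to(\mathsf R'^{\mathsf{op}},\dots)$, and composition with it takes a $\mathsf{Lax}^+$-monoidal functor out of $(\mathbbm 1,1,1,1)$ to another such, using the last sentence of Theorem~\ref{thm:cosemigroup}; translated back, this says $\mathsf{Oplax}_+$-monoidal functors preserve semigroups.

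The only genuine point requiring care is that the passage to opposites really is an involution that matches all the structure on the nose, including the coherence axioms: one must check that reversing all $2$-cells in the pentagon-type and unit-type diagrams of Definition~\ref{def:lax+} reproduces exactly the diagrams of Definition~\ref{def:oplax+}, and likewise for the functor and natural-transformation axioms. I expect this to be the main ``obstacle,'' though it is entirely routine: it is the same kind of bookkeeping already used to state ``$(\mathsf R,\OpLax,\Psi,\upsilon)$ is an $\mathsf{Oplax}_+$-monoidal category iff $(\mathsf R^{\mathsf{op}},\dots)$ is a $\mathsf{Lax}^+$-monoidal category'' earlier in the section, and it has already been absorbed into the definitions. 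Consequently the proof can be given in a couple of lines, citing Theorem~\ref{thm:cosemigroup} and these definitional identifications, with no new induction needed.

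\begin{proof}
By definition, $(\mathsf R^{\mathsf{op}},\OpLax^{\mathsf{op}},\Psi^{\mathsf{op}},\upsilon^{\mathsf{op}})$ is a $\mathsf{Lax}^+$-monoidal category, and, again by definition, a semigroup in $(\mathsf R,\OpLax,\Psi,\upsilon)$ is a cosemigroup in it, a morphism of semigroups is a morphism of cosemigroups in it, an $\mathsf{Oplax}_+$-monoidal functor into $(\mathsf R,\OpLax,\Psi,\upsilon)$ is a $\mathsf{Lax}^+$-monoidal functor into $(\mathsf R^{\mathsf{op}},\OpLax^{\mathsf{op}},\Psi^{\mathsf{op}},\upsilon^{\mathsf{op}})$, and an $\mathsf{Oplax}_+$-monoidal natural transformation is a $\mathsf{Lax}^+$-monoidal natural transformation between the corresponding $\mathsf{Lax}^+$-monoidal functors. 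Since $(\mathbbm 1,1,1,1)=(\mathbbm 1,1,1,1)^{\mathsf{op}}$, applying Theorem~\ref{thm:cosemigroup} to the $\mathsf{Lax}^+$-monoidal category $(\mathsf R^{\mathsf{op}},\OpLax^{\mathsf{op}},\Psi^{\mathsf{op}},\upsilon^{\mathsf{op}})$ yields an isomorphism between the category of cosemigroups in it --- that is, the category in part (i) --- and the category of $\mathsf{Lax}^+$-monoidal functors $(\mathbbm 1,1,1,1)\to (\mathsf R^{\mathsf{op}},\OpLax^{\mathsf{op}},\Psi^{\mathsf{op}},\upsilon^{\mathsf{op}})$ with their $\mathsf{Lax}^+$-monoidal natural transformations --- that is, the category in part (ii). This proves the asserted isomorphism. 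Finally, any $\mathsf{Oplax}_+$-monoidal functor $(\mathsf R,\OpLax,\Psi,\upsilon)\to(\mathsf R',\OpLax',\Psi',\upsilon')$ is a $\mathsf{Lax}^+$-monoidal functor $(\mathsf R^{\mathsf{op}},\dots)\to(\mathsf R'^{\mathsf{op}},\dots)$, so by the last assertion of Theorem~\ref{thm:cosemigroup} its composite with a $\mathsf{Lax}^+$-monoidal functor out of $(\mathbbm 1,1,1,1)$ is again one; translating back through the identifications above, this says exactly that $\mathsf{Oplax}_+$-monoidal functors preserve semigroups.
\end{proof}
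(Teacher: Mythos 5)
Your proposal is correct and coincides with the paper's own argument: the paper's proof of this theorem is exactly the one-line application of Theorem \ref{thm:cosemigroup} to the $\mathsf{Lax}^+$-monoidal category $(\mathsf R^{\mathsf{op}},\OpLax^{\mathsf{op}},\Psi^{\mathsf{op}},\upsilon^{\mathsf{op}})$, with all the dictionary identifications you list already built into the definitions of this section. Your extra commentary on checking that dualization matches the coherence axioms is fine but, as you note, already absorbed into the definitions.
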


\begin{proof}
It follows by the application of Theorem \ref{thm:cosemigroup} to the $\mathsf{Lax}^+$-monoidal category $(\mathsf R^{\mathsf{op}},\OpLax^{\mathsf{op}},\Psi^{\mathsf{op}},\upsilon^{\mathsf{op}})$.
\end{proof}

Dually to comodules in Definition \ref{def:comodule}, one may consider modules over semigroups in the following sense.

\begin{definition} \label{def:module}
A {\em module} of a semigroup $(a,\mu)$ in an $\mathsf{Oplax}_+$-monoidal category $(\mathsf R,\OpLax,\Psi,\upsilon)$ is a comodule of the cosemigroup $(a,\mu)$ in the $\mathsf{Lax}^+$-monoidal category $(\mathsf R^{\mathsf{op}},\OpLax^{\mathsf{op}},\Psi^{\mathsf{op}},\upsilon^{\mathsf{op}})$. Thus it consists of
\begin{itemize}
\item an object $x$ of $\mathsf R$
\item a morphism $\varrho: x\OpLax a \to x$ (the {\em action})
\end{itemize}
such that the (associativity) diagram on the left of 
$$
\xymatrix{
\lw x \rw \OpLax (a \OpLax a) \ar[d]_-{\upsilon \circ \mu}
& \ar[l]_-{\Psi_{1,2}} x \OpLax a \OpLax a \ar[r]^-{\Psi_{2,1}} &
(x \OpLax a) \OpLax \lw a \rw \ar[d]^-{\varrho \circ \upsilon} \\
x \OpLax a \ar[r]_-\varrho &
x &
\ar[l]^\varrho x \OpLax a}
\qquad 
\xymatrix{
x \OpLax a \ar[r]^-\varrho \ar[d]_-{\theta  \circ 1} & 
x \ar[d]^-\theta \\
x' \OpLax a \ar[r]_-{\varrho'} &
x'}
$$
commutes.
A {\em morphism of modules} $(x,\varrho) \to (x',\varrho')$ is a morphism $\theta:x \to x'$ in $\mathsf R$ such that the diagram on the right above commutes.
\end{definition}

\section{Bisemigroups in $\mathsf{Lax}^+\mathsf{Oplax}_+$-duoidal categories}
\label{sec:bisgr}

In order to define bisemigroups in some category, one needs both a $\mathsf{Lax}^+$-monoidal structure --- for the cosemigroup part to live in --- and an $\mathsf{Oplax}_+$-monoidal structure --- to host the semigroup part. Also a suitable compatibility of these monoidal structures is needed so that the category of cosemigroups inherits the $\mathsf{Oplax}_+$-monoidal structure and, dually, the category of semigroups inherits the $\mathsf{Lax}^+$-monoidal structure. Then we can define bisemigroups by the usual principle: as cosemigroups in the category of semigroups; equivalently, as semigroups in the category of cosemigroups. In this section we formulate a suitable compatibility between $\mathsf{Lax}^+$- and $\mathsf{Oplax}_+$-monoidal structures which allows us to carry out this program.

\begin{definition}
\label{def:++duoidal}
A {\em $\mathsf{Lax}^+\mathsf{Oplax}_+$-duoidal category} consists of
\begin{itemize}
\item a category $\mathsf D$
\item a $\mathsf{Lax}^+$-monoidal structure $(\mathsf D,\Lax,\Phi,\iota)$
\item an $\mathsf{Oplax}_+$-monoidal structure $(\mathsf D,\OpLax,\Psi,\upsilon)$
\item for all positive integers $n$ and $p$, natural transformations
$$
\xymatrix@C=60pt{
\mathsf D^{np} \ar[r]^-{\tau_np} 
\ar[d]_-{\oplax n \cdots \oplax n}
\ar@{}[rrd]|-{\Longdownarrow {\xi_n^p}} &
\mathsf D^{pn} \ar[r]^-{\lax p \cdots \lax p} &
\mathsf D^{n} \ar[d]^-{\oplax n} \\
\mathsf D^{p} \ar[rr]_-{\lax p} &&
\mathsf D}
$$
\end{itemize}
satisfying the following equivalent compatibility conditions, for all positive integers $n,p,k_1,\dots,k_p$.
\begin{itemize}
\item[{(i)}] 
\begin{itemize}
\item[$\bullet$]
$(\oplax n,\xi_n)$ is a $\mathsf{Lax}^+$-monoidal functor $(\mathsf D,\Lax,\Phi,\iota)^n \to(\mathsf D,\Lax,\Phi,\iota);$ 
\item[$\bullet$]
$\Psi_{k_1,\dots,k_p}$ and $\upsilon$ are $\mathsf{Lax}^+$-monoidal natural transformations.
\end{itemize}
\noindent
Succinctly, $((\mathsf D,\Lax,\Phi,\iota),(\OpLax,\xi),\Psi,\upsilon)$ is an $\mathsf{Oplax}_+$-monoid in the strict monoidal 2-category $\mathsf{Lax}^+$ of Proposition \ref{prop:comp_nattr}.
\item[{(ii)}]
\begin{itemize}
\item[$\bullet$]
$(\lax n,\xi^n)$ is an $\mathsf{Oplax}_+$-monoidal functor $(\mathsf D,\OpLax,\Psi,\upsilon)^n \to (\mathsf D,\OpLax,\Psi,\upsilon);$ 
\item[$\bullet$]
$\Phi_{k_1,\dots,k_p}$ and $\iota$ are $\mathsf{Oplax}_+$-monoidal natural transformations.
\end{itemize}
\noindent
Succinctly, $((\mathsf D,\OpLax,\Psi,\upsilon),(\Lax,\xi),\Phi,\iota)$ is a $\mathsf{Lax}^+$-monoid in the strict monoidal 2-category $\mathsf{Oplax}_+$ of Section \ref{sec:semigroup}.
\item[{(iii)}] The following diagrams commute.
$$
\xymatrix@C=-25pt @R=19pt{
\oplax n \cdot (\lax p \cdots \lax p) \cdot
(\raisebox{-1.5pt}{$\lax {k_1} \cdots \lax {k_p}\cdots \lax {k_1} \cdots \lax {k_p}$}) \cdot \tau_{nK}
\ar@{=}[d]
\ar[r]^-{\raisebox{8pt}{${}_
{1\cdot (\Phi_{k_1,\dots,k_p}\cdots \Phi_{k_1,\dots,k_p})\cdot 1}$}} &
\oplax n \cdot(\raisebox{-1.5pt}{$\lax {K} \cdots \lax {K}$} ) 
\cdot \tau_{nK}  \ar[ddd]^-{\xi^{K}_n} \\
\oplax n \cdot (\lax p \cdots \lax p) \cdot \tau_{np} \cdot
(\raisebox{-1.5pt}{$\lax {k_1} \cdots \lax {k_1} \cdots \lax {k_p} \cdots \lax {k_p}$}) \cdot (\tau_{n k_1} \cdots \tau_{n k_p}) 
\ar[d]_-{\xi^p_n \cdot 1 \cdot 1} \\
\lax p \cdot (\oplax n \cdots \oplax n) \cdot 
(\raisebox{-1.5pt}{$\lax {k_1} \cdots \lax {k_1} \cdots \lax {k_p} \cdots \lax {k_p}$}) \cdot (\tau_{n k_1} \cdots \tau_{n k_p}) 
\ar[d]_-{1\cdot (\xi^{k_1}_n \cdots \xi^{k_p}_n)}  \\
\lax p \cdot (\raisebox{-1.5pt}{$\lax {k_1} \cdots \lax {k_p}$}) 
\cdot (\oplax n \cdots \oplax n) 
\ar[r]_-{\Phi_{k_1,\dots ,k_p}\cdot 1} &
\raisebox{-1.5pt}{$\lax{K}$} \cdot 
(\oplax n \cdots \oplax n)}
\qquad
\xymatrix@R=113pt{
\oplax n
\ar[r]^-{1 \cdot (\iota \cdots \iota)} \ar[rd]_-{\iota\cdot 1} &
\oplax n \cdot (\raisebox{-1.5pt}{$\lax 1 \cdots \lax 1$})  \ar[d]^-{\xi^1_n} \\
& \raisebox{-1.5pt}{$\lax 1$}  \cdot \oplax n}
$$
$$
\xymatrix@R=19pt{
\lax n \cdot (\oplax p \cdots \oplax p) \cdot
(\raisebox{-1.5pt}{$\oplax {k_1} \cdots \oplax {k_p} \cdots \oplax {k_1} \cdots \oplax {k_p}$}) &
\lax n \cdot(\raisebox{-1.5pt}{$\oplax {K} \cdots \oplax {K}$} ) 
\ar[l]_-{\raisebox{8pt}{${}_{1\cdot (\Psi_{k_1,\dots,k_p}\cdots \Psi_{k_1,\dots,k_p})}$}}  \\
\ar[u]^-{\xi^n_p\cdot 1}
\oplax p \cdot (\lax n \cdots \lax n) \cdot \tau_{pn} \cdot
(\raisebox{-1.5pt}{$\oplax {k_1} \cdots \oplax {k_p} \cdots \oplax {k_1} \cdots \oplax {k_p}$}) 
\ar@{=}[d] \\
\oplax p \cdot (\lax n \cdots \lax n)  \cdot
(\raisebox{-1.5pt}{$\oplax {k_1} \cdots \oplax {k_1} \cdots \oplax {k_p} \cdots \oplax {k_p}$}) \cdot \tau_{pn}  \\
\ar[u]^-{1\cdot (\xi_{k_1}^n \cdots \xi_{k_p}^n)\cdot 1}
\oplax p \cdot (\raisebox{-1.5pt}{$\oplax {k_1} \cdots \oplax {k_p}$}) 
\cdot (\lax n \cdots \lax n) \cdot 
(\tau_{k_1 n}\dots \tau_{k_p n})\cdot \tau_{p n}   &
\ar[uuu]_-{\xi^n_K\cdot 1}
\raisebox{-1.5pt}{$\oplax{K}$}\cdot 
(\lax n \cdots \lax n) \cdot \tau_{Kn} 
\ar[l]^-{\raisebox{-14pt}{${}_{\Psi_{k_1,\dots, k_p}\cdot 1 \cdot 1}$}}}
\qquad
\xymatrix@R=113pt@C=30pt{
\raisebox{-1.5pt}{$\oplax 1$} \cdot \lax n
\ar[d]_-{\xi^n_1} \ar[rd]^-{\upsilon \cdot 1} \\
\lax n \cdot (\raisebox{-1.5pt}{$\oplax 1 \cdots \oplax 1$})
\ar[r]_-{1 \cdot (\upsilon \cdots \upsilon)} &
\lax n}
$$
\end{itemize}
\end{definition}

\begin{definition} \label{def:Lax+Oplax+functor}
A {\em $\mathsf{Lax}^+\mathsf{Oplax}_+$-duoidal functor} $(\mathsf D,\Lax,\OpLax) \to (\mathsf D',\Lax',\OpLax')$ is defined as
\begin{itemize}
\item[{(i)}] an $\mathsf{Oplax}_+$-monoidal 1-cell 
$((G,\Gamma^\bullet),\Gamma^\circ):((\mathsf D,\Lax),\OpLax) \to ((\mathsf D',\Lax'),\OpLax')$
in the strict monoidal 2-category $\mathsf{Lax}^+$ of Proposition \ref{prop:comp_nattr}; equivalently,
\item[{(ii)}] a $\mathsf{Lax}^+$-monoidal 1-cell 
$((G,\Gamma^\circ),\Gamma^\bullet):((\mathsf D,\OpLax),\Lax) \to ((\mathsf D',\OpLax'),\Lax')$ in the strict monoidal 2-category $\mathsf{Oplax}_+$ of Section \ref{sec:semigroup}; equivalently,
\item[{(iii)}]
\begin{itemize}
\item[{$\bullet$}] a functor $G: \mathsf D \to \mathsf D'$
\item[{$\bullet$}] a $\mathsf{Lax}^+$-monoidal structure $\{\Gamma^\bullet_n:G\cdot \lax n \to \lax n'\cdot (G \cdots G)\}_{n\in \mathbb N}$
\item[{$\bullet$}] a $\mathsf{Oplax}_+$-monoidal structure $\{\Gamma^\circ_n: \oplax p'\cdot (G \cdots G) \to G\cdot \oplax p \}_{p \in \mathbb N}$
\end{itemize}

\noindent
rendering commutative the following diagram for all positive integers $n,p$.
\begin{equation} \label{eq:Lax+Oplax+functor}
\xymatrix@R=4pt{
\oplax p' \cdot (G \cdots G) \cdot (\lax n \cdots \lax n) \cdot \tau_{pn}
\ar[r]^-{\raisebox{8pt}{${}_{
1\cdot (\Gamma^\bullet_n \cdots \Gamma^\bullet_n)\cdot 1}$}}
\ar[dddd]_-{\Gamma^\circ_p \cdot 1\cdot 1} &
\oplax p' \cdot (\lax n' \cdots \lax n') \cdot (G \cdots G) \cdot \tau_{pn}
\ar@{=}[d] \\
& \oplax p' \cdot (\lax n' \cdots \lax n') \cdot \tau_{pn} \cdot (G \cdots G)
\ar[r]^-{\xi^{\prime n}_p\cdot 1} &
\lax n' \cdot ( \oplax p' \cdots  \oplax p') \cdot (G \cdots G)
\ar[ddd]^-{1\cdot (\Gamma^\circ_p \cdots \Gamma^\circ_p)} \\
\\
\\
G \cdot \oplax p \cdot (\lax n \cdots \lax n) \cdot \tau_{pn}
\ar[r]_-{1\cdot \xi^n_p} &
G \cdot \lax n \cdot ( \oplax p \cdots  \oplax p)
\ar[r]_-{\Gamma^\bullet_n \cdot 1} &
\lax n' \cdot (G \cdots G) \cdot ( \oplax p \cdots  \oplax p)}
\end{equation}
\end{itemize}
\end{definition}

It follows immediately from Definition \ref{def:Lax+Oplax+functor} and Proposition \ref{prop:comp_functor}~(1) that the composite of $\mathsf{Lax}^+\mathsf{Oplax}_+$-duoidal functors is again $\mathsf{Lax}^+\mathsf{Oplax}_+$-duoidal via the composite $\mathsf{Lax}^+$-monoidal structure in Proposition \ref{prop:comp_functor}~(1) and the symmetrically constructed composite $\mathsf{Oplax}_+$-monoidal structure.

\begin{definition} \label{def:Lax+Oplax+nattr}
A {\em $\mathsf{Lax}^+\mathsf{Oplax}_+$-duoidal natural transformation} is defined as 
\begin{itemize}
\item[{(i)}] an $\mathsf{Oplax}_+$-monoidal 2-cell in the strict monoidal 2-category $\mathsf{Lax}^+$ of Proposition \ref{prop:comp_nattr}; equivalently,
\item[{(ii)}] a $\mathsf{Lax}^+$-monoidal 2-cell in the strict monoidal 2-category $\mathsf{Oplax}_+$ of Section \ref{sec:semigroup}; equivalently,
\item[{(iii)}] a natural transformation which is both $\mathsf{Oplax}_+$-monoidal and $\mathsf{Lax}^+$-monoidal. That is, it renders commutative both the diagram of Definition \ref{def:lax+-nattr} and its symmetric counterpart with reversed arrows and inverted colors.
\end{itemize}
\end{definition}

\begin{theorem}
\label{thm:duoidal(co)semigroup}
For any $\mathsf{Lax}^+\mathsf{Oplax}_+$-duoidal category $(\mathsf D,(\Lax,\Phi,\iota),(\OpLax,\Psi,\upsilon),\xi)$, the following assertions hold.
\begin{itemize}
\item[{(1)}] The category of cosemigroups in the $\mathsf{Lax}^+$-monoidal category $(\mathsf D,\Lax,\Phi,\iota)$ is $\mathsf{Oplax}_+$-monoidal via the structure $(\OpLax,\Psi,\upsilon)$.
\item[{(2)}] The category of semigroups in the $\mathsf{Oplax}_+$-monoidal category $(\mathsf D,\OpLax,\Psi,\upsilon)$ is $\mathsf{Lax}^+$-monoidal via the structure $(\Lax,\Phi,\iota)$.
\end{itemize}
\end{theorem}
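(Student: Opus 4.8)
The plan is to prove part (1); part (2) then follows by the duality $(\mathsf D,\OpLax)\rightsquigarrow(\mathsf D^{\mathsf{op}},\OpLax^{\mathsf{op}})$ already exploited throughout Section \ref{sec:semigroup}, exchanging the roles of the $\mathsf{Lax}^+$- and $\mathsf{Oplax}_+$-structures. For part (1), the first step is to use Theorem \ref{thm:cosemigroup} to replace cosemigroups by $\mathsf{Lax}^+$-monoidal functors $(\mathbbm 1,1,1,1)\to(\mathsf D,\Lax,\Phi,\iota)$; thus the category of cosemigroups is the hom-category $\mathsf{Lax}^+((\mathbbm 1,1,1,1),(\mathsf D,\Lax,\Phi,\iota))$ in the strict monoidal 2-category $\mathsf{Lax}^+$ of Proposition \ref{prop:comp_nattr}. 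Now invoke compatibility condition (i) of Definition \ref{def:++duoidal}: $((\mathsf D,\Lax,\Phi,\iota),(\OpLax,\xi),\Psi,\upsilon)$ is an $\mathsf{Oplax}_+$-monoid in the 2-category $\mathsf{Lax}^+$. The essential point is then purely formal: for any $\mathsf{Oplax}_+$-monoid $\mathsf X$ in a (strict monoidal) 2-category $\mathcal K$, and any object $\mathsf T$ of $\mathcal K$ which is itself an $\mathsf{Oplax}_+$-monoid (here $\mathsf T=(\mathbbm 1,1,1,1)$, with its trivial $\mathsf{Oplax}_+$-monoid structure), the hom-category $\mathcal K(\mathsf T,\mathsf X)$ inherits an $\mathsf{Oplax}_+$-monoidal structure.

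The second step is to make this inherited structure explicit. Given $\mathsf{Lax}^+$-monoidal functors $F_1,\dots,F_n:(\mathbbm 1,1,1,1)\to(\mathsf D,\Lax)$, one forms their product $F_1\cdots F_n:(\mathbbm 1,1,1,1)^n=(\mathbbm 1,1,1,1)\to(\mathsf D,\Lax)^n$ (using that $\mathbbm 1^n\cong\mathbbm 1$ canonically, and the $\mathsf{Lax}^+$-monoidal structure on a Cartesian product from Proposition \ref{prop:Cartesian+}), and then post-composes with the $\mathsf{Lax}^+$-monoidal functor $(\oplax n,\xi_n):(\mathsf D,\Lax)^n\to(\mathsf D,\Lax)$, using that composition of $\mathsf{Lax}^+$-monoidal functors is again such (Proposition \ref{prop:comp_functor}~(1)). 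This defines the $n$-ary operation $\OpLax$ on cosemigroups. Concretely, if $(a_i,\delta^{(i)})$ are cosemigroups with higher comultiplications $\delta^{(i)}_k$, the cosemigroup $a_1\OpLax\cdots\OpLax a_n$ has underlying object $a_1\OpLax\cdots\OpLax a_n$ and comultiplication
$$
\xymatrix@C=50pt{
a_1\OpLax\cdots\OpLax a_n
\ar[r]^-{\delta^{(1)}_2\,\circ\,\cdots\,\circ\,\delta^{(n)}_2} &
(a_1\OpLax a_1)\OpLax\cdots\OpLax(a_n\OpLax a_n)
\ar[r]^-{\xi_n^{2}} &
(a_1\OpLax\cdots\OpLax a_n)\Lax(a_1\OpLax\cdots\OpLax a_n).}
$$
The coherence natural transformations $\Psi_{k_1,\dots,k_p}$ and the counit $\upsilon$ of the $\mathsf{Oplax}_+$-monoidal structure on the category of cosemigroups are, by condition (i), exactly the underlying natural transformations of $\Psi_{k_1,\dots,k_p}$ and $\upsilon$ on $\mathsf D$, regarded as $\mathsf{Lax}^+$-monoidal natural transformations — one checks their components are cosemigroup morphisms precisely because they are $\mathsf{Lax}^+$-monoidal. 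That $\Psi$ and $\upsilon$ satisfy the coassociativity and counitality axioms of an $\mathsf{Oplax}_+$-monoidal category at the level of cosemigroups is inherited verbatim from $\mathsf D$, since the forgetful functor to $\mathsf D$ is faithful and reflects commuting diagrams.

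The main obstacle, and the step deserving genuine care rather than a wave of the hand, is verifying that each structure morphism produced above is in fact a morphism of cosemigroups — i.e.\ that $\xi_n^2$ and the higher $\xi$'s intertwine the relevant comultiplications — and, dually for part (2), that the $\mathsf{Lax}^+$-coherence data $\Phi$, $\iota$ land among semigroup morphisms. This is exactly what conditions (i)/(ii)/(iii) of Definition \ref{def:++duoidal} were designed to deliver: the big hexagonal diagrams there are the statement that $(\oplax n,\xi_n)$ is $\mathsf{Lax}^+$-monoidal and that $\Psi,\upsilon$ are $\mathsf{Lax}^+$-monoidal, which is the same as saying the $\delta_n$'s transport correctly. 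So concretely I would (a) spell out the $n$-ary cosemigroup operation as above, (b) check coassociativity of the new comultiplication by pasting the coassociativity hexagons of the $\delta^{(i)}$ with the first big diagram of Definition \ref{def:++duoidal}(iii), (c) check that $\Psi_{k_1,\dots,k_p}$ and $\upsilon$ have cosemigroup-morphism components using that they are $\mathsf{Lax}^+$-monoidal (second diagram of (iii) for $\upsilon$, and the $\mathsf{Lax}^+$-monoidal-naturality square for $\Psi$), and (d) note that all the $\mathsf{Oplax}_+$-monoidal-category axioms then hold upstairs because they hold in $\mathsf D$ and the forgetful functor is faithful. Part (2) is obtained by applying part (1) to the $\mathsf{Lax}^+\mathsf{Oplax}_+$-duoidal category built on $\mathsf D^{\mathsf{op}}$, in which the two structures are swapped.
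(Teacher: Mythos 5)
Your proposal is correct and follows essentially the same route as the paper: identify cosemigroups with $\mathsf{Lax}^+$-monoidal functors $(\mathbbm 1,1,1,1)\to(\mathsf D,\Lax,\Phi,\iota)$ via Theorem \ref{thm:cosemigroup}, use Definition \ref{def:++duoidal}~(i) together with Propositions \ref{prop:Cartesian+}, \ref{prop:comp_functor} and \ref{prop:comp_nattr} to lift $\OpLax$, $\Psi$ and $\upsilon$ to the category of cosemigroups, with the $\mathsf{Oplax}_+$-axioms inherited componentwise, and obtain part (2) by duality. Your additional hands-on checks (the explicit comultiplication on $a_1\OpLax\cdots\OpLax a_n$ and the pasting of coassociativity hexagons) are already subsumed by the 2-categorical argument, so they are harmless but not needed.
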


\begin{proof}
We only prove (1), part (2) follows symmetrically.

By Theorem \ref{thm:cosemigroup} and Definition \ref{def:++duoidal}~{(i)}, 
any sequence $\{a_1,\cdots, a_n\}$ of cosemigroups in $(\mathsf D,\Lax,\Phi,\iota)$ determines a $\mathsf{Lax}^+$-monoidal functor; and any sequence 
$\{f_1:a_1\to a'_1,\cdots, f_n:a_n\to a'_n\}$ of cosemigroup morphisms determines a $\mathsf{Lax}^+$-monoidal natural transformation
$$
\xymatrix{
\mathbbm 1
\ar@/^1.3pc/[rrr]^-{a_1\cdots a_n} 
\ar@{}[rrr]|-{\Longdownarrow {f_1\cdots f_n} }
\ar@/_1.3pc/[rrr]_-{a'_1\cdots a'_n}  &&&
\mathsf D^n \ar[r]^-{\oplax n} &
\mathsf D.}
$$
That is, $\oplax n$ determines a functor to the category of cosemigroups in $(\mathsf D,\Lax,\Phi,\iota)$ from the $n^{\textrm{th}}$ Cartesian power of this category.

Again by Theorem \ref{thm:cosemigroup} and Definition \ref{def:++duoidal}~{(i)}, the natural transformations $\Psi$ and $\upsilon$ --- if evaluated at cosemigroups --- yield morphisms of cosemigroups. Thus they can be seen as natural transformations between functors connecting Cartesian powers of categories of cosemigroups. They satisfy the axioms in Definition \ref{def:oplax+} by construction.
\end{proof}

\begin{definition}
\label{def:bimonoid}
A {\em bisemigroup} in a $\mathsf{Lax}^+\mathsf{Oplax}_+$-duoidal category $(\mathsf D,\Lax,\OpLax, \xi)$ is 
\begin{itemize}
\item[{(i)}] a semigroup in the $\mathsf{Oplax}_+$-monoidal category of cosemigroups in $(\mathsf D,\Lax)$ --- see Theorem \ref{thm:duoidal(co)semigroup}~(1); equivalently,
\item[{(ii)}] a cosemigroup in the $\mathsf{Lax}^+$-monoidal category of semigroups in $(\mathsf D,\OpLax)$ --- see Theorem \ref{thm:duoidal(co)semigroup}~(2); equivalently,
\item[{(iii)}]
\begin{itemize}
\item[$\bullet$] an object $a$
\item[$\bullet$] a semigroup $(a,\mu)$ in $(\mathsf D,\OpLax)$
\item[$\bullet$] a cosemigroup $(a,\delta)$ in $(\mathsf D,\Lax)$
\end{itemize}
subject to the compatibility condition encoded in the commutative diagram
$$
\xymatrix{
a\OpLax a \ar[r]^-\mu \ar[d]_-{\delta \circ \delta} &
a \ar[r]^-\delta & 
a \Lax a. \\
(a\Lax a) \OpLax (a \Lax a) \ar[rr]_-{\xi^2_2} &&
(a \OpLax a) \Lax (a \OpLax a). \ar[u]_-{\mu\bullet \mu}}
$$
\end{itemize}
A {\em morphism of bisemigroups} is a semigroup morphism in the category of cosemigroups in $(\mathsf D,\Lax)$; equivalently, a cosemigroup morphism in the category of semigroups in $(\mathsf D,\OpLax)$; equivalently, a morphism in $\mathsf D$ which is both a cosemigroup morphism in $(\mathsf D,\Lax)$ and a semigroup morphism in $(\mathsf D,\OpLax)$.
\end{definition}

\begin{theorem} \label{thm:cosemigroup_cat}
For any $\mathsf{Lax}^+\mathsf{Oplax}_+$-duoidal category $(\mathsf D,\Lax,\OpLax,\xi)$ the following categories are isomorphic.
\begin{itemize}
\item[{(i)}] The category of bisemigroups and their morphisms in $(\mathsf D,\Lax,\OpLax,\xi)$.
\item[{(ii)}] The category of $\mathsf{Lax}^+\mathsf{Oplax}_+$-duoidal functors $(\mathbbm 1,1,1,1)\to (\mathsf D,\Lax,\OpLax,\xi)$ and their $\mathsf{Lax}^+\mathsf{Oplax}_+$-duoidal natural transformations.
\end{itemize}
Consequently, $\mathsf{Lax}^+\mathsf{Oplax}_+$-duoidal functors preserve bisemigroups.
\end{theorem}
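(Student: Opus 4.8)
The plan is to mimic the proof of Theorem~\ref{thm:cosemigroup}, building on the two previous theorems rather than redoing any diagram chases. By Definition~\ref{def:Lax+Oplax+functor}, a $\mathsf{Lax}^+\mathsf{Oplax}_+$-duoidal functor $(\mathbbm 1,1,1,1)\to(\mathsf D,\Lax,\OpLax,\xi)$ is a pair consisting of a $\mathsf{Lax}^+$-monoidal functor and an $\mathsf{Oplax}_+$-monoidal functor on the same underlying functor $\mathbbm 1\to\mathsf D$ (which is just the choice of an object $a$), subject to the compatibility diagram~\eqref{eq:Lax+Oplax+functor}. By Theorem~\ref{thm:cosemigroup} applied to $(\mathsf D,\Lax,\Phi,\iota)$, the $\mathsf{Lax}^+$-monoidal functor part is the same thing as a cosemigroup $(a,\delta)$ in $(\mathsf D,\Lax)$, and dually by Theorem~\ref{thm:semigroup} applied to $(\mathsf D,\OpLax,\Psi,\upsilon)$, the $\mathsf{Oplax}_+$-monoidal functor part is the same thing as a semigroup $(a,\mu)$ in $(\mathsf D,\OpLax)$ (both on the same object $a$, since both structures sit over the same functor $\mathbbm 1\to\mathsf D$). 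So everything comes down to matching the compatibility condition~\eqref{eq:Lax+Oplax+functor} with the bisemigroup compatibility of Definition~\ref{def:bimonoid}~(iii).

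The key step, then, is to specialize the diagram~\eqref{eq:Lax+Oplax+functor} to the case where the source is $(\mathbbm 1,1,1,1)$ and show it is equivalent to the single square in Definition~\ref{def:bimonoid}~(iii). Here I would first observe that when $G:\mathbbm 1\to\mathsf D$ picks out $a$, the flip functors $\tau_{pn}$ evaluated on the one object of $\mathbbm 1^{pn}=\mathbbm 1$ are identities, the functors $\lax n$, $\oplax p$ evaluated on the relevant powers of $a$ produce the iterated products, and the components $\Gamma^\bullet_n$, $\Gamma^\circ_p$ become exactly the morphisms $\delta_n$ (resp.\ $\mu_p$) built iteratively from $\delta$ and $\mu$ as in~\eqref{eq:delta_k+1} and its dual. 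Thus~\eqref{eq:Lax+Oplax+functor} for general $n,p$ becomes a family of compatibility diagrams relating $\mu_p$, $\delta_n$ and $\xi^n_p$. The claim I need is that this whole family is implied by (and of course implies) the single instance $n=p=2$, which is precisely the diagram in Definition~\ref{def:bimonoid}~(iii). One direction is trivial (take $n=p=2$). For the converse, I would run an induction in $n$ and $p$ simultaneously, exactly parallel to the ``two kinds of induction step'' argument in the proof of Theorem~\ref{thm:cosemigroup}: the induction hypothesis plus the $\mathsf{Lax}^+$- and $\mathsf{Oplax}_+$-monoidal axioms for $\Phi$, $\Psi$, $\iota$, $\upsilon$, together with the duoidal compatibility axioms (iii) of Definition~\ref{def:++duoidal} (the four pentagon/triangle-type diagrams relating $\xi$ with $\Phi$, $\Psi$, $\iota$, $\upsilon$), reduce a larger-index instance of~\eqref{eq:Lax+Oplax+functor} to smaller-index ones. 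In effect, this is the statement that a bisemigroup, being a semigroup object in the $\mathsf{Oplax}_+$-monoidal category of cosemigroups (Theorem~\ref{thm:duoidal(co)semigroup}~(1)), already ``knows'' via Theorem~\ref{thm:semigroup} that its iterated structure maps assemble into an $\mathsf{Oplax}_+$-monoidal functor out of $\mathbbm 1$ landing in cosemigroups, which is the $n,p$-indexed family above.

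For the morphism level, the argument is the same bootstrap: a $\mathsf{Lax}^+\mathsf{Oplax}_+$-duoidal natural transformation between two such functors is, by Definition~\ref{def:Lax+Oplax+nattr}~(iii), a morphism $f:a\to a'$ in $\mathsf D$ that is simultaneously $\mathsf{Lax}^+$-monoidal and $\mathsf{Oplax}_+$-monoidal; by the morphism parts of Theorems~\ref{thm:cosemigroup} and~\ref{thm:semigroup} this is exactly a morphism that is both a cosemigroup morphism for $(\mathsf D,\Lax)$ and a semigroup morphism for $(\mathsf D,\OpLax)$, which by the last sentence of Definition~\ref{def:bimonoid} is precisely a morphism of bisemigroups. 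Functoriality of the correspondence in both directions is immediate from the constructions. The final clause, that $\mathsf{Lax}^+\mathsf{Oplax}_+$-duoidal functors preserve bisemigroups, follows formally: if $B\to\mathsf D$ is a duoidal functor exhibiting a bisemigroup as in (ii) and $H:\mathsf D\to\mathsf D'$ is a duoidal functor, then the composite $H\circ B$ is again a duoidal functor out of $\mathbbm 1$ (composability of duoidal functors was noted right after Definition~\ref{def:Lax+Oplax+functor}), hence names a bisemigroup in $\mathsf D'$.

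The main obstacle I expect is the simultaneous induction matching~\eqref{eq:Lax+Oplax+functor} with Definition~\ref{def:bimonoid}~(iii): one has to be careful that the two induction steps (raising $n$ by prepending a $1$ vs.\ incrementing $k_1$, and likewise for $p$) actually only invoke the four duoidal compatibility diagrams of Definition~\ref{def:++duoidal}~(iii) and the already-established monoidal-functor axioms, without needing any extra coherence. A clean way to avoid re-deriving all of this by hand is to phrase the whole theorem as an application of Theorem~\ref{thm:cosemigroup} ``internal to'' the $\mathsf{Oplax}_+$-monoidal category of cosemigroups: a bisemigroup is a semigroup in that category (Definition~\ref{def:bimonoid}~(i), Theorem~\ref{thm:duoidal(co)semigroup}~(1)), hence by Theorem~\ref{thm:semigroup} the same as an $\mathsf{Oplax}_+$-monoidal functor $\mathbbm 1\to(\text{cosemigroups})$, and such a functor unpacks --- using Theorem~\ref{thm:cosemigroup} once more to describe cosemigroups and their morphisms --- into exactly a duoidal functor $\mathbbm 1\to\mathsf D$; the bookkeeping then reduces to checking that the two packagings of the same iterated data agree, which is what the induction certifies.
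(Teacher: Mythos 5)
Your proposal is correct, and in fact it contains two routes. The ``clean way'' you describe in your last paragraph is exactly the paper's proof: by Definition \ref{def:bimonoid} a bisemigroup is a semigroup in the category of cosemigroups in $(\mathsf D,\Lax)$; Theorem \ref{thm:semigroup} turns this into an $\mathsf{Oplax}_+$-monoidal functor $\mathbbm 1\to(\text{cosemigroups})$; Theorem \ref{thm:cosemigroup} identifies the $\mathsf{Oplax}_+$-monoidal category of cosemigroups with the hom-category $\mathsf{Lax}^+\bigl((\mathbbm 1,1),(\mathsf D,\Lax)\bigr)$, so one lands on $\mathsf{Oplax}_+$-monoidal 1- and 2-cells in $\mathsf{Lax}^+$; and Definitions \ref{def:Lax+Oplax+functor}~(i) and \ref{def:Lax+Oplax+nattr}~(i) say this \emph{is} the category of duoidal functors --- no new induction is needed, because the last identification is definitional and the compatibility of the Theorem \ref{thm:cosemigroup} isomorphism with the $\mathsf{Oplax}_+$-structure is built into the construction in Theorem \ref{thm:duoidal(co)semigroup}. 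Your main route instead starts from formulation (iii) of Definition \ref{def:Lax+Oplax+functor} and therefore has to prove by hand that the whole family of instances of \eqref{eq:Lax+Oplax+functor} (with source $\mathbbm 1$, where $\Gamma^\bullet_n=\delta_n$ and $\Gamma^\circ_p=\mu_p$) is generated by its single $(n,p)=(2,2)$ instance, i.e.\ the square of Definition \ref{def:bimonoid}~(iii). That double induction is only sketched in your write-up, but it is the right kind of argument --- the paper itself carries out its $p=2$ slice in the proof of Theorem \ref{thm:mod_monoidal}, using exactly the diagrams of Definition \ref{def:++duoidal}~(iii) you invoke --- so it buys an explicit, elementary verification at the cost of redoing bookkeeping that the paper's 2-categorical packaging makes automatic. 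The morphism-level argument and the preservation statement are handled as in the paper.
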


\begin{proof}
By Definition \ref{def:bimonoid}, the category of part (i) is isomorphic to the category of semigroups in the category of cosemigroups in $(\mathsf D,\Lax)$.
So by Theorem \ref{thm:semigroup}, it is isomorphic to the category of $\mathsf{Oplax}_+$-monoidal functors and $\mathsf{Oplax}_+$-monoidal natural transformations from $\mathbbm 1$ to the category of cosemigroups in $(\mathsf D,\Lax)$.
Then by Theorem \ref{thm:cosemigroup}, it is further equivalent to the category of $\mathsf{Oplax}_+$-monoidal 1-cells and $\mathsf{Oplax}_+$-monoidal 2-cells from $(\mathbbm 1,1)$ to $(\mathsf D,\Lax)$ in $\mathsf{Lax}^+$.
By Definition \ref{def:Lax+Oplax+functor} and Definition \ref{def:Lax+Oplax+nattr} this is isomorphic to the category of part (ii).
\end{proof}

\begin{theorem} \label{thm:mod_monoidal}
For any bisemigroup $(a,\delta,\mu)$ in an arbitrary $\mathsf{Lax}^+\mathsf{Oplax}_+$-duoidal category 
$(\mathsf D,(\Lax,\Phi,\iota),(\OpLax,\Psi,\upsilon),\xi)$
the following assertions hold.
\begin{itemize}
\item[{(1)}] The category of modules --- in the sense of Definition \ref{def:module} --- of the semigroup $(a,\mu)$ in the $\mathsf{Oplax}_+$-monoidal category $(\mathsf D,\OpLax,\Psi,\upsilon)$ admits the $\mathsf{Lax}^+$-monoidal structure $(\Lax,\Phi,\iota)$.
\item[{(2)}] The category of comodules --- in the sense of Definition \ref{def:comodule} --- of the cosemigroup $(a,\delta)$ in the $\mathsf{Lax}^+$-monoidal category $(\mathsf D,\Lax,\Phi,\iota)$ admits the $\mathsf{Oplax}_+$-monoidal structure $(\OpLax,\Psi,\upsilon)$.
\end{itemize}
\end{theorem}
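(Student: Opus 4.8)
The plan is to mimic the pattern already used in the proof of Theorem \ref{thm:duoidal(co)semigroup}, replacing ``(co)semigroups'' by ``(co)modules over the fixed bisemigroup''. I only discuss part (1); part (2) then follows by passing to the opposite category, exactly as in the body of the paper. The key point is that the functors $\oplax p$ and the natural transformations $\Psi_{k_1,\dots,k_p}$, $\upsilon$ constituting the $\mathsf{Oplax}_+$-monoidal structure on $\mathsf D$, when evaluated on objects equipped with an $(a,\mu)$-module structure in the sense of Definition \ref{def:module}, carry canonical $(a,\mu)$-module structures again, and thereby descend to the category of modules, where they satisfy the $\mathsf{Oplax}_+$-monoidal axioms --- no, wait: we want a $\mathsf{Lax}^+$-structure on modules, so it is rather $\lax p$, $\Phi_{k_1,\dots,k_p}$ and $\iota$ that must be lifted, using the \emph{oplax} duoidal functor structure $(\lax p,\xi^p)$ from Definition \ref{def:++duoidal}~(ii) together with the bisemigroup compatibility of $(a,\delta,\mu)$.

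Concretely, first I would fix a finite family $(x_1,\varrho_1),\dots,(x_p,\varrho_p)$ of $(a,\mu)$-modules and equip $x_1\Lax\cdots\Lax x_p$ with the candidate action
\[
\xymatrix@C=30pt{
(x_1\Lax\cdots\Lax x_p)\OpLax a \ar[r] &
(x_1\Lax\cdots\Lax x_p)\OpLax(a\Lax\cdots\Lax a) \ar[r]^-{\xi^p_p} &
(x_1\OpLax a)\Lax\cdots\Lax(x_p\OpLax a) \ar[r]^-{\varrho_1\bullet\cdots\bullet\varrho_p} &
x_1\Lax\cdots\Lax x_p,}
\]
where the first arrow is built from $\delta$ composed $p-1$ times so as to produce an $p$-fold $\Lax$-power of $a$ (this uses that $(a,\delta)$ is a cosemigroup, so the iterated comultiplications $\delta_p$ of Theorem \ref{thm:cosemigroup} are available and coassociative). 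Associativity of this action --- the left-hand square of Definition \ref{def:module} --- is exactly where the bisemigroup compatibility square of Definition \ref{def:bimonoid}~(iii), combined with the hexagon-type coherence diagrams for $\xi$ in Definition \ref{def:++duoidal}~(iii) and the $n$-ary (co)associativity \eqref{eq:n-coass}, is needed; I expect this to be the only genuinely computational step. One also checks that a family of module morphisms $\theta_i:(x_i,\varrho_i)\to(x_i',\varrho_i')$ yields a module morphism $\theta_1\Lax\cdots\Lax\theta_p$, which is immediate from naturality of $\xi^p_p$ and of $\bullet$. Thus each $\lax p$ lifts to a functor from the $p$-th Cartesian power of the module category to itself.

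Next I would observe that $\Phi_{k_1,\dots,k_n}$ and $\iota$, evaluated at modules, are module morphisms: this is precisely the statement that $\Phi$ and $\iota$ are $\mathsf{Oplax}_+$-monoidal natural transformations between the (now module-valued) functors, which is one of the equivalent formulations of the duoidal compatibility in Definition \ref{def:++duoidal}~(ii). The verification that the relevant square commutes after evaluating at a module reduces, via Theorem \ref{thm:semigroup}/Theorem \ref{thm:cosemigroup} packaging, to the duoidal axioms already assumed. Hence $\Phi$ and $\iota$ descend to the module category, and they automatically satisfy the $\mathsf{Lax}^+$-monoidal axioms of Definition \ref{def:lax+} there because those are equalities of natural transformations that already hold one level up in $\mathsf D$ and the forgetful functor to $\mathsf D$ is faithful (indeed it reflects such identities). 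This yields the desired $\mathsf{Lax}^+$-monoidal structure $(\Lax,\Phi,\iota)$ on the category of $(a,\mu)$-modules.

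The main obstacle, as indicated, will be checking coassociativity/associativity of the lifted action in a bookkeeping-heavy diagram: one must interleave $p-1$ applications of $\delta$, one application of the hexagon for $\xi$, and the module-associativity of each $\varrho_i$, and see that the two ways of re-associating agree. I would handle this by first treating $p=2$ --- where it is literally the compatibility square of Definition \ref{def:bimonoid} pasted with a single instance of the left diagram in Definition \ref{def:++duoidal}~(iii) and one instance of Definition \ref{def:module} for each factor --- and then invoke the $n$-ary coherence of Theorem \ref{thm:cosemigroup} to reduce the general $p$ to iterated $p=2$ steps, exactly the induction strategy used in the proof of Theorem \ref{thm:cosemigroup} itself. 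Everything else is routine and can be left to the reader in the same spirit as the earlier propositions in the paper. As before, part (2) is obtained by applying part (1) to the $\mathsf{Lax}^+\mathsf{Oplax}_+$-duoidal category $(\mathsf D^{\mathsf{op}},\OpLax^{\mathsf{op}},\Lax^{\mathsf{op}},\xi^{\mathsf{op}})$, under which modules over $(a,\mu)$ become comodules over $(a,\delta)$ and the $\mathsf{Lax}^+$-structure $\Lax$ becomes the $\mathsf{Oplax}_+$-structure $\OpLax$.
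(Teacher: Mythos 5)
Your proposal follows the paper's route in substance: the candidate action is the paper's one, namely $(\varrho_1\bullet\cdots\bullet\varrho_n)\cdot\xi^n_2\cdot(1\circ\delta_n)$ (note the interchange you need is $\xi^p_2$ --- a $2$-fold $\OpLax$-product of $p$-fold $\Lax$-products rearranged --- not $\xi^p_p$ as written), the structure morphisms $\Phi$ and $\iota$ are shown to be module morphisms using the duoidal axioms together with \eqref{eq:n-coass}, the $\mathsf{Lax}^+$-axioms then hold automatically since the forgetful functor is faithful, and part (2) is obtained by the same symmetry/duality the paper invokes. So the skeleton is right.

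The weak point is your plan for the associativity of the lifted action. ``Reduce the general $p$ to iterated $p=2$ steps'' cannot be taken literally in this unbiased setting: $\lax p$ is a primitive functor, not an iterate of $\lax 2$, and the comparison cells $\Phi$ are not invertible, so associativity of the binary case cannot be transported to the $p$-ary one along them. What the paper actually does is keep the number of module factors $p$ arbitrary throughout and isolate the only genuinely inductive ingredient: by induction on $l$ one identifies the $l$-fold comultiplication of the cosemigroup $a\OpLax a$ (from Theorem \ref{thm:duoidal(co)semigroup}) as $\xi^l_2\cdot(\delta_l\circ\delta_l)$, using the top-right and top-left diagrams of Definition \ref{def:++duoidal}~(iii); since $\mu$ is a cosemigroup morphism by Definition \ref{def:bimonoid}, Theorem \ref{thm:cosemigroup} then upgrades the binary compatibility square to $\delta_n\cdot\mu=\mu^{\bullet n}\cdot\xi^n_2\cdot(\delta_n\circ\delta_n)$ for all $n$, and the $p$-ary associativity is verified directly by two large pastings (Figures \ref{fig:module_left} and \ref{fig:module_right}) whose remaining regions are naturality and the \emph{bottom}-left and \emph{bottom}-right diagrams of Definition \ref{def:++duoidal}~(iii). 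Relatedly, even your $p=2$ case needs more than ``the bimonoid square plus one left diagram'': the associativity constraint of Definition \ref{def:module} involves $\Psi_{1,2}$, $\Psi_{2,1}$ and $\upsilon$, so the compatibility of $\xi$ with $\Psi$ and $\upsilon$ (the bottom row of Definition \ref{def:++duoidal}~(iii)) is indispensable. With the induction redirected as above (on the iterated comultiplication of $a\OpLax a$, not on the number of $\Lax$-factors), your argument becomes the paper's proof.
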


\begin{proof}
We only prove part (1), part (2) is verified symmetrically.

For any sequence of $(a,\mu)$-modules $\{(x_i,\varrho_i)\}_{i=1,\dots,n}$, there is an $(a,\mu)$-module 
$$
\xymatrix@C=20pt{
(x_1 \Lax \cdots \Lax x_n) \OpLax a
\ar[r]^-{1\circ \delta_n} &
(x_1 \Lax \cdots \Lax x_n) \OpLax a^{\bullet n}
\ar[r]^-{\xi^n_2} &
(x_1 \OpLax a) \Lax \cdots \Lax (x_n \OpLax a)
\ar[rr]^-{\varrho_1 \bullet \cdots \bullet \varrho_n} &&
x_1 \Lax \cdots \Lax x_n.}
$$
Indeed, one checks by induction on $l$ that the $l$-fold comultiplication of \eqref{eq:delta_k+1} for $a\circ a$ comes out as
$\xymatrix{
a\circ a \ar[r]^-{\delta_l \circ \delta_l} &
a^{\bullet l} \circ a^{\bullet l} \ar[r]^-{\xi^l_2} &
(a\circ a)^{\bullet l}}$.
The induction step uses the top right diagram in part (iii) of Definition \ref{def:++duoidal} for $n=2$, and the top left diagram of Definition \ref{def:++duoidal}~(iii) for $n=2$, $p=2$, $k_1=1$ and $k_2=l$. 
Since by Definition \ref{def:bimonoid} the multiplication $\mu$ is a morphism of cosemigroups, we obtain the commutative diagram
$$
\xymatrix{
a\circ a \ar[d]_-\mu \ar[r]^-{\delta_n \circ \delta_n} &
a^{\bullet n} \circ a^{\bullet n} \ar[r]^-{\xi^n_2} &
(a\circ a)^{\bullet n} \ar[d]^-{\mu^{\bullet n}} \\
a \ar[rr]_-{\delta_n} &&
a^{\bullet n}}
$$
for all non-negative integers $n$. Together with the bottom right diagram of Definition \ref{def:++duoidal}~(iii), this proves the commutativity of the leftmost region of Figure \ref{fig:module_left}.
The region labelled by (BR) in Figure \ref{fig:module_right} also commutes by the bottom right diagram of Definition \ref{def:++duoidal}~(iii). 
The regions marked by (BL) in Figure \ref{fig:module_left} and Figure \ref{fig:module_right} both commute by the bottom left diagram of Definition \ref{def:++duoidal}~(iii) at the respective values $(n, p=2,k_1=1,k_2=2)$ and $(n, p=2,k_1=2,k_2=1)$.
The remaining regions commute by evident naturality.
Whenever each action $\varrho_i$ is associative, the right verticals of the diagrams in Figure \ref{fig:module_left} and Figure \ref{fig:module_right} are equal. Hence also the paths on their left hand sides are equal, proving the associativity of the stated action on $x_1\Lax \cdots \Lax x_n$.
\begin{amssidewaysfigure}
\thisfloatpagestyle{empty}
\centering
\xymatrix@C=65pt@R=40pt{
\lw x_1 \Lax \cdots \Lax x_n \rw \OpLax (a\OpLax a)
\ar[rd]^-{1\circ (\delta_n \circ \delta_n)} 
\ar[dddd]_-{\upsilon \circ \mu} &&
(x_1 \Lax \cdots \Lax x_n) \OpLax  a \OpLax  a
\ar[d]^-{1\circ \delta_n \circ \delta_n}
\ar[ll]_-{\Psi_{1,2}}  \\
& \lw x_1 \Lax \cdots \Lax x_n \rw \OpLax  (a^{\bullet n} \OpLax  a^{\bullet n})
\ar[dd]^-{\xi^n_1 \circ \xi^n_2} \ar@{}[rdd]|-{\textrm{(BL)}} &
( x_1 \Lax \cdots \Lax x_n ) \OpLax  a^{\bullet n}\OpLax a^{\bullet n}
\ar[d]^-{\xi^n_3}
\ar[l]_-{\Psi_{1,2}}  \\
&& (x_1 \OpLax  a \OpLax  a) \Lax \cdots \Lax (x_n \OpLax  a \OpLax  a)
\ar[d]^-{\Psi_{1,2} \Lax \cdots \Lax \Psi_{1,2}} \\
& (\lw x_1 \rw \Lax \cdots \Lax \lw x_n \rw) \OpLax (a \OpLax  a)^{\bullet n}
\ar[r]^-{\xi^n_2}
\ar[d]^-{(\upsilon \Lax \cdots \Lax \upsilon) \circ \mu^{\bullet n}} &
(\lw x_1 \rw \OpLax (a\OpLax a))  \Lax \cdots \Lax (\lw x_n \rw \OpLax  (a\OpLax  a))
\ar[d]^-{(\upsilon \circ \mu) \Lax \cdots \Lax (\upsilon \circ \mu)} \\
(x_1 \Lax \cdots \Lax x_n) \OpLax a
\ar[r]_-{1\circ \delta_n} &
(x_1 \Lax \cdots \Lax x_n) \OpLax a^{\bullet n}
\ar[r]_-{\xi^n_2} &
(x_1 \OpLax a) \Lax \cdots \Lax (x_n \OpLax a)
\ar[d]^-{\varrho_1 \Lax \cdots \Lax \varrho_n} \\
&& x_1 \Lax \cdots \Lax x_n }
\caption{Associativity of the action on \protect\scalebox{1.6}{\raisebox{-1pt}{$\bullet$}}-products of modules --- first part}
\label{fig:module_left}
\end{amssidewaysfigure}
\begin{amssidewaysfigure}
\thisfloatpagestyle{empty}
\centering
\xymatrix@C=20pt@R=40pt{
((x_1 \Lax \cdots \Lax x_n) \OpLax a)\OpLax  \lw a \rw 
\ar[d]_-{(1\circ \delta_n) \circ 1} &&&
(x_1 \Lax \cdots \Lax x_n) \OpLax  a \OpLax  a
\ar[d]^-{1\circ \delta_n \circ \delta_n}
\ar[lll]_-{\Psi_{2,1}} \\
(( x_1 \Lax \cdots \Lax x_n ) \OpLax a^{\bullet n}) \OpLax  \lw a \rw
\ar[r]^-{1\circ \lw \delta_n \rw}
\ar[dd]_-{\xi^n_2 \circ 1} &
(( x_1 \Lax \cdots \Lax x_n ) \OpLax  a^{\bullet n}) \OpLax  \lw a^{\bullet n} \rw 
\ar[dd]^-{\xi^n_2 \circ 1} &&
( x_1 \Lax \cdots \Lax x_n ) \OpLax  a^{\bullet n}\OpLax a^{\bullet n}
\ar[d]^-{\xi^n_3}
\ar[ll]_-{\Psi_{2,1}}  
\ar@{}[lldd]|-{\textrm{(BL)}} \\
&&& (x_1 \OpLax  a \OpLax  a) \Lax \cdots \Lax (x_n \OpLax  a \OpLax  a)
\ar[d]^-{\Psi_{2,1} \Lax \cdots \Lax \Psi_{2,1}} \\
((x_1 \OpLax a) \Lax \cdots \Lax  (x_n \OpLax a))\OpLax \lw a \rw
\ar[r]^-{1\circ \lw \delta_n \rw}
\ar[d]_-{(\varrho_1 \Lax \cdots \Lax \varrho_n) \circ \upsilon} &
((x_1 \OpLax  a) \Lax \cdots \Lax  (x_n \OpLax a))\OpLax \lw a^{\bullet n} \rw
\ar[d]^-{(\varrho_1 \Lax \cdots \Lax \varrho_n ) \circ \upsilon}
\ar[r]^-{1\circ \xi^n_1} &
((x_1 \OpLax  a) \Lax \cdots \Lax  (x_n \OpLax  a))\OpLax  \lw a \rw^{\bullet n}
\ar[r]^-{\xi^n_2}
\ar[d]^-{(\varrho_1 \Lax \cdots \Lax \varrho_n ) \circ \upsilon^{\bullet n}}
\ar@{}[ld]|-{\quad\textrm{(BR)}} &
((x_1 \OpLax  a) \OpLax  \lw a \rw) \Lax \cdots \Lax ((x_n \OpLax a) \OpLax  \lw a \rw) 
\ar[d]^-{(\varrho_1 \circ \upsilon) \Lax \cdots \Lax (\varrho_n \circ \upsilon)} \\
(x_1 \Lax \cdots \Lax x_n) \OpLax a
\ar[r]_-{1\circ \delta_n} &
(x_1 \Lax \cdots \Lax x_n) \OpLax a^{\bullet n}
\ar@{=}[r] &
(x_1 \Lax \cdots \Lax x_n) \OpLax a^{\bullet n}
\ar[r]_-{\xi^n_2} &
(x_1 \OpLax a) \Lax \cdots \Lax (x_n \OpLax a)
\ar[d]^-{\varrho_1 \Lax \cdots \Lax \varrho_n} \\
&&& x_1 \Lax \cdots \Lax x_n}
\caption{Associativity of the action on \protect\scalebox{1.6}{\raisebox{-1pt}{$\bullet$}}-products of modules --- second part}
\label{fig:module_right}
\end{amssidewaysfigure}

The $\Lax$-monoidal products of module morphisms are easily seen to be morphisms of modules.
The natural transformation $\iota$ --- if evaluated at a module --- is a morphism of modules by the top right diagram of Definition \ref{def:++duoidal}~(iii).
In order to see that for all non-negative integers $p,k_1,\dots k_p$, the component of
$\Phi_{k_1,\dots ,k_p}$ at any modules is a morphism of modules, use \eqref{eq:n-coass} and the top left diagram of Definition \ref{def:++duoidal}~(iii) for $n=2$.
\end{proof}

\begin{proposition} \label{prop:Hopfmod}
For any bisemigroup $(a,\delta,\mu)$ in an arbitrary $\mathsf{Lax}^+\mathsf{Oplax}_+$-duoidal category the following assertions hold.
\begin{itemize}
\item[{(1)}] $((a,\delta),\mu)$ is a semigroup in the $\mathsf{Oplax}_+$-monoidal category of comodules over the cosemigroup $(a,\delta)$ (cf. Theorem \ref{thm:mod_monoidal}~(2)).
\item[{(2)}] $((a,\mu),\delta)$ is a cosemigroup in the $\mathsf{Lax}^+$-monoidal category of modules over the semigroup $(a,\mu)$ (cf. Theorem \ref{thm:mod_monoidal}~(1)).
\end{itemize}
\end{proposition}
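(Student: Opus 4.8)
The plan is to prove both statements by unwinding the definitions and showing that the structural maps $\mu$ and $\delta$ of the bisemigroup, together with the regular module/comodule structures provided by Example~\ref{ex:reg_comod} (and its evident dual), satisfy the compatibility axioms of a semigroup object (resp.\ cosemigroup object) in the relevant lifted monoidal category. By the symmetry $(\mathsf D,\Lax,\OpLax,\xi)\mapsto(\mathsf D^{\mathsf{op}},\OpLax^{\mathsf{op}},\Lax^{\mathsf{op}},\xi^{\mathsf{op}})$ it suffices to prove part~(1); part~(2) then follows by dualization, exactly as in the proofs of Theorems~\ref{thm:duoidal(co)semigroup} and \ref{thm:mod_monoidal}.

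For part~(1), the starting point is that by Example~\ref{ex:reg_comod} the pair $(a,\delta)$ is a comodule over the cosemigroup $(a,\delta)$ with coaction $\delta$ itself. Theorem~\ref{thm:mod_monoidal}~(2) equips the category of such comodules with the $\mathsf{Oplax}_+$-monoidal structure $(\OpLax,\Psi,\upsilon)$; concretely, the $\OpLax$-product of comodules $(x_i,\varrho_i)$ carries the coaction built from $\delta_n$, $\xi$ and the $\varrho_i$ dual to the formula displayed in the proof of Theorem~\ref{thm:mod_monoidal}~(1). So to prove~(1) I must check two things: first, that $\mu\colon (a,\delta)\OpLax(a,\delta)\to(a,\delta)$ is a morphism of comodules over $(a,\delta)$, i.e.\ that $\mu$ intertwines the $\OpLax$-product coaction on $a\OpLax a$ with the coaction $\delta$ on $a$; second, that the associativity square for $\mu$ in the category of comodules commutes, which is automatic since it already commutes in $\mathsf D$ and the forgetful functor to $\mathsf D$ is faithful and preserves the semigroup structure maps.

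The real content is the first point, and it is precisely the bisemigroup compatibility condition of Definition~\ref{def:bimonoid}~(iii), read through the lens of the $\OpLax$-product coaction formula. Spelling it out: the coaction on $a\OpLax a$ (viewing both factors as the regular comodule) is the composite $a\OpLax a \xrightarrow{1\circ\delta_2}(a\OpLax a)\OpLax a^{\bullet 2}\xrightarrow{\xi^2_2}(a\OpLax a)\Lax(a\OpLax a)$ followed by $\mu\bullet\mu$ on the last two factors --- wait, more carefully, the coaction target is $(a\OpLax a)\Lax a$, obtained as in Theorem~\ref{thm:mod_monoidal} with $n=2$, $x_1=x_2=a$, $\varrho_i=\delta$. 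Requiring $\mu$ to be a comodule morphism means $\delta\cdot\mu$ equals $(\mu\Lax 1)$ post-composed with that coaction, and chasing the definitions this is exactly the square in Definition~\ref{def:bimonoid}~(iii) with the map $\xi^2_2$ mediating between $(a\Lax a)\OpLax(a\Lax a)$ and $(a\OpLax a)\Lax(a\OpLax a)$. I expect the main obstacle to be bookkeeping: matching the $n$-ary comultiplication $\delta_n$ appearing in the comodule-product coaction at $n=2$ against the binary $\delta$ in Definition~\ref{def:bimonoid}, using $\delta_2=\delta$ from Theorem~\ref{thm:cosemigroup}, and correctly tracking the placement of $\xi^2_2$ versus the iterated $\xi^n_2$'s of Theorem~\ref{thm:mod_monoidal}'s proof --- but no genuinely new diagram needs to be checked; it all reduces to the single compatibility already assumed. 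Finally, the statement that $\mu$ is a comodule morphism is stable under the monoidal structure and morphisms, so $((a,\delta),\mu)$ is a bona fide semigroup object, completing~(1); part~(2) is the mirror image using Example~\ref{ex:reg_comod}'s dual and Theorem~\ref{thm:mod_monoidal}~(1).
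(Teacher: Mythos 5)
Your argument is correct and is essentially the paper's own proof: the regular comodule of Example \ref{ex:reg_comod}, the observation that the comodule-morphism condition for $\mu$ is literally the compatibility square of Definition \ref{def:bimonoid}~(iii) (with $\xi^2_2$ placed exactly as you say), associativity inherited through the faithful forgetful functor, and part (2) by duality. The only blemish is expository: in the coaction on the $\OpLax$-product of comodules it is the iterated multiplication $\mu_n$ (dual to the $\delta_n$ of Theorem \ref{thm:mod_monoidal}'s proof), not $\delta_n$, that enters, but at $n=2$ this is just $\mu$, so your identification is unaffected.
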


\begin{proof}
We only prove part (1), part (2) follows symmetrically.

By Example \ref{ex:reg_comod}, $(a,\delta)$ is an $(a,\delta)$-comodule. 
The compatibility diagram of Definition \ref{def:bimonoid}~(iii) can be interpreted as $\mu$ being a morphism of $(a,\delta)$-comodules.
Associativity of $\mu$ in the category of $(a,\delta)$-comodules follows from the associativity of the monoid $(a,\mu)$.
\end{proof}

\begin{definition} \label{def:Hopf_module}
A {\em Hopf module} over a bisemigroup $(a,\delta,\mu)$ in a $\mathsf{Lax}^+\mathsf{Oplax}_+$-duoidal category $(\mathsf D,\Lax,\OpLax,\xi)$ is defined by the following equivalent data.
\begin{itemize} 
\item[{(i)}] A module over the semigroup $((a,\delta),\mu)$ in the category of comodules over the cosemigroup $(a,\delta)$ in $(\mathsf D,\Lax)$.
\item[{(ii)}] A comodule over the cosemigroup $((a,\mu),\delta)$ in the category of modules over the semigroup $(a,\mu)$ in $(\mathsf D,\OpLax)$.
\item[{(iii)}]
\begin{itemize}
\item[$\bullet$] An object $x$ of $\mathsf D$,
\item[$\bullet$] a module $(x,\nu)$ over the semigroup $(a,\mu)$ in $(\mathsf D,\OpLax)$,
\item[$\bullet$] a comodule $(x,\varrho)$ over the cosemigroup $(a,\delta)$ in $(\mathsf D,\Lax)$,
\end{itemize}
rendering commutative the following diagram.
$$
\xymatrix{
x\OpLax a \ar[r]^-\nu \ar[d]_-{\varrho \circ \delta} &
x \ar[r]^-\varrho &
x \Lax a \\
(x \Lax a) \OpLax (a\Lax a)  \ar[rr]_-{\xi^2_2} &&
(x\OpLax a) \Lax (a \OpLax a) \ar[u]_-{\nu \bullet \mu}}
$$
\end{itemize}
\end{definition}

\begin{example}
By Example \ref{ex:reg_comod} and its dual counterpart, and by the compatibility diagram of Definition \ref{def:bimonoid}~(iii), $(a,\delta,\mu)$ is a Hopf module over 
an arbitrary bisemigroup $(a,\delta,\mu)$ in a $\mathsf{Lax}^+\mathsf{Oplax}_+$-duoidal category.
\end{example}

\section{Comonoids in $\mathsf{Lax}^+\mathsf{Oplax}^0$-monoidal categories}

For the definition of monoids and comonoids --- that is, to define units for multiplications and counits for comultiplications --- we also need nullary monoidal products. They may be related to the higher monoidal products in possibly different ways. There are well-known and well-studied structures called lax and oplax monoidal categories, see e.g. \cite{DayStreet:lax}. However, the gadgets motivating this paper, namely, unital \BiHom-monoids and counital \BiHom-comonoids, do not seem to fit these well-studied situations. In this section we show that counital \BiHom-comonoids can be described as comonoids in monoidal categories in which the monoidal products of positive numbers of factors come with lax compatibiliy morphisms, but the nullary part is oplax coherent. The dual situation, suitable to describe unital \BiHom-monoids as monoids, will be discussed in the next section.

\begin{definition}
\label{def:lax+oplax0}
A {\em $\mathsf{Lax}^+\mathsf{Oplax}^0$-monoidal category} consists of 
\begin{itemize}
\item a category $\mathsf L$
\item for any non-negative integer $n$, a functor $\lax{n}:{
\mathsf L}^{n} \to \mathsf L$ 
\item for all non-negative integers $n,k_1,\dots,k_n$, and for the functors 
$$
\xymatrix@C=17pt{\mathsf L^{k_i} \ar[r]^-{[k_i]} & \mathsf L^{\overline k_i}}
:=\left\{
\arraycolsep=1pt\def\arraystretch{.5}
\begin{array}{ll}
\xymatrix@C=17pt{\mathsf L^{k_i} \ar[r]^-1 & \mathsf L^{k_i}}
& \textrm{if} \quad k_i>0 \\
\xymatrix{\mathbbm 1 \ar[r]^-{\scalebox{.7}{$\lax 0$}} & \mathsf L}
& \textrm{if} \quad k_i=0,
\end{array}
\right.
$$
and using the notation of Section \ref{sec:preli}, natural transformations
$$
\xymatrix@C=60pt{
{\mathsf L}^K
\ar[rr]^-{\lax {k_1} \ \cdots \ \lax{k_n}}
\ar[rd]^-{\ [k_1]\cdots [k_n]} 
\ar@/_3pc/[rrdd]_-{\lax K} &
\ar@{}[d]|-{\qquad\qquad\qquad  \Longdownarrow {\Phi_{k_1,\dots,k_n}}} &
{\mathsf L}^n \ar[dd]^-{\lax n}  \\
& {\mathsf L}^{K+Z}
\ar@{}[d]|-{\qquad\qquad  \Longuparrow {\phi_{k_1,\dots,k_n}}}
\ar[rd]^-{\lax {K+Z}}  & \\ 
&& {\mathsf L}}
\qquad\qquad
\raisebox{-17pt}{$
\xymatrix@C=60pt{
{\mathsf L} \ar@{=}@/^1.7pc/[r] \ar@/_1.7pc/[r]_-{\lax 1}
\ar@{}[r]|-{\Longdownarrow \iota} &
\mathsf L}$}
$$
\end{itemize}
such that
$$
\Phi_{k_1,\dots,k_n}=1\quad \textrm{if}\quad K=0,
\qquad \qquad
\phi_{k_1,\dots,k_n}=1\quad \textrm{if}\quad Z=0,
\qquad \qquad
\phi_{0}=\iota\cdot 1,
$$
and the diagram
\begin{amssidewaysfigure}
\centering
\xymatrix@C=70pt@R=40pt{
\lax n \cdot (\lax{m_1} \cdots \lax{m_n})\cdot
(\raisebox{-1.5pt}{$\lax{k_{11}}\cdots \lax{k_{nm_n}}$})
\ar[r]^-{\Phi_{m_1,\dots, m_n}\cdot 1}
\ar[ddd]^-{1\cdot (
\Phi_{k_{11},\dots,k_{1m_1}}\cdots
\Phi_{k_{n1},\dots,k_{nm_n}})} &
\lax{M+\sum_{i=1}^n Z(m_i)} \cdot
([m_1]\cdots [m_n])\cdot
(\lax{k_{11}}\cdots \lax{k_{nm_n}})
\ar@{=}[d] &
\ar[l]_-{\phi_{m_1,\dots, m_n}\cdot 1}
\lax M \cdot (\lax{k_{11}}\cdots \lax{k_{nm_n}}) 
\ar[ddd]_-{\Phi_{k_{11},\dots,k_{nm_n}}}
\\
&  \lax{M+\sum_{i=1}^n Z(m_i)} \cdot
(\lax{\widetilde k_{11}} \cdots \lax{\widetilde k_{n {\overline{m}_n}}})
\ar[d]^-{\Phi_{\widetilde k_{11},\dots,\widetilde k_{n{\overline{m}_n}}}} \\
& \lax{K+\widetilde Z} \cdot
([\widetilde k_{11}] \cdots [\widetilde k_{n{\overline{m}_n}}])
\ar@{=}[d]^-{\mathrm{Lemma}~\ref{lem:[F]}} \\
\lax n \cdot 
(\raisebox{-1.5pt}{$\lax{K_1+Z_1} \cdots \lax{K_n+Z_n}$}) \cdot
([k_{11}] \cdots [k_{nm_n}]) 
\ar[r]^-{\Phi_{K_1+Z_1,\dots,K_n+Z_n}\cdot 1} &
\lax{K+\widetilde Z} \cdot 
([K_1+Z_1] \cdots [K_n+Z_n]) \cdot 
([k_{11}] \cdots [k_{nm_n}]) 
\ar@{=}[d]^-{\mathrm{Lemma}~\ref{lem:[F]}} & 
\ar[l]_-{\phi_{K_1+Z_1,\dots,K_n+Z_n}\cdot 1}
\lax{K+Z} \cdot ([k_{11}] \cdots [k_{nm_n}]) \\
& \lax{K+\widetilde Z} \cdot 
([\widehat k_{11}] \cdots [\widehat k_{n{\overline{m}_n}}])\cdot 
([K_1] \cdots [K_n]) \\
\ar[uu]_-{1\cdot (
\phi_{k_{11},\dots,k_{1m_1}}\cdots
\phi_{k_{n1},\dots,k_{nm_n}})}
\lax{n}\cdot 
(\raisebox{-1.5pt}{$\lax{K_1} \cdots \lax{K_n}$})
\ar[r]_-{\Phi_{K_1,\dots,K_n}}& 
\lax{K+\sum_{i=1}^n Z(K_i)}\cdot
([K_1] \cdots [K_n])  
\ar[u]_-{\phi_{\widehat k_{11},\dots,\widehat k_{n{\overline{m}_n}}}\cdot 1} &
\ar[l]^-{\phi_{K_1,\dots,K_n}} 
\ar[uu]^-{\phi_{k_{11},\dots,k_{nm_n}}}
\lax K}
\caption{Axioms of $\mathsf{Lax}^+\mathsf{Oplax}^0$-monoidal category}
\label{fig:lax+oplax0}
\end{amssidewaysfigure}
\begin{equation} \label{eq:lax+oplax0unit}
\xymatrix@C=30pt@R=28pt{
\lax n \ar[r]^-{\iota\cdot 1} \ar[d]_-{1\cdot (\iota \cdots \iota)}
\ar@{=}[rd] &
\raisebox{-1.5pt}{$\lax 1$} \cdot \lax n \ar[d]^-{\Phi_{n}} \\
\lax n \cdot (\raisebox{-1.5pt}{$\lax 1 \cdots \lax 1$}) \ar[r]_-{\Phi_{1,\dots,1}} &
\lax n}
\end{equation}
as well as the regions of the diagram of Figure \ref{fig:lax+oplax0} commute, for all non-negative integers $n$, $\{m_i\}$ labelled by ${i=1,\dots,n}$, and $\{k_{ij}\}$ labelled by $i=1,\dots ,n$ and $j=1,\dots,m_i$.

Remark that the diagram of Figure \ref{fig:lax+oplax0} reduces to that of Definition \ref{def:lax+} if $Z=0$. Hence, forgetting about the natural transformations $\phi_{k_1,\dots,k_n}$, and those instances of $\Phi_{k_1,\dots,k_n}$ for which $Z> 0$, a $\mathsf{Lax}^+\mathsf{Oplax}^0$-monoidal category can be regarded as a $\mathsf{Lax}^+$-monoidal category. 

A $\mathsf{Lax}^+\mathsf{Oplax}^0$-monoidal category is {\em (strict) normal} if it is (strict) normal as a $\mathsf{Lax}^+$-monoidal category.
\end{definition}

Again, Definition \ref{def:lax+oplax0} can be extended in a straightforward way to define $\mathsf{Lax}^+\mathsf{Oplax}^0$-monoids in any Gray monoid --- so in particular in any strict monoidal 2-category.

Throughout, in a $\mathsf{Lax}^+\mathsf{Oplax}^0$-monoidal category we denote by $j$ the image of the single object of $\mathbbm 1$ under the functor $\lax 0:\mathbbm 1 \to \mathsf L$; and keep the earlier notation $\lb a \rb$ for the image of any object $a$ under the functor $\lax 1: \mathsf L \to \mathsf L$; and $a_1 \Lax \cdots \Lax a_n$ for the image of an object $(a_1,\dots,a_n)$ under the functor $\lax n:\mathsf L^n \to \mathsf L$.

Analogously to Proposition \ref{prop:Cartesian+}, the following holds.

\begin{proposition}
\label{prop:Lax+Oplax0Cartesian}
The Cartesian product $\mathsf L \mathsf L'$ of $\mathsf{Lax}^+\mathsf{Oplax}^0$-monoidal categories $(\mathsf L,\Lax,$ $\Phi,\phi,\iota)$ and $(\mathsf L',\Lax',\Phi',\phi',\iota')$ is again $\mathsf{Lax}^+\mathsf{Oplax}^0$-monoidal via 
\begin{itemize}
\item the functors 
$\xymatrix{
(\mathsf L \mathsf L')^n \ar[r]^-{\tau_{2n}} &
\mathsf L^n \mathsf L^{\prime n} \ar[r]^-{\lax n \lax n'} &
\mathsf L \mathsf L'}$
\item the natural transformations $\iota\iota':1\to \lax 1\lax 1'$,
$$
\xymatrix@C=60pt {
(\mathsf L \mathsf L')^K \ar[r]^-{\tau_{2k_1}\cdots \tau_{2k_n}}
\ar[dd]_-{[k_1][k_1]'\cdots [k_n][k_n]'} \ar[rd]^-{\tau_{2K}} &
\mathsf L^{k_1} \mathsf L^{\prime k_1} \cdots 
\mathsf L^{k_n} \mathsf L^{\prime k_n}
\ar[r]^-{\lax {k_1} \lax {k_1}' \cdots \lax {k_n} \lax {k_n}'}
\ar[d]_-{\tau_{2n}} &
(\mathsf L \mathsf L')^n \ar[d]^-{\tau_{2n}} \\
& \mathsf L^K \mathsf L^{\prime K}
\ar@{}[rd]|-{\Longdownarrow {\Phi_{k_1,\dots, k_n}\Phi'_{k_1,\dots, k_n}}}
\ar[r]^-{\lax {k_1} \cdots \lax {k_n}\lax {k_1}' \cdots \lax {k_n}'}
\ar[d]_-{[k_1]\cdots [k_n][k_1]'\cdots [k_n]'} &
\mathsf L^n \mathsf L^{\prime n} 
\ar[d]^-{\lax n \lax n'} \\
(\mathsf L \mathsf L')^{K+Z}
\ar[r]_-{\tau_{2(K+Z)}} &
\mathsf L^{K+Z} \mathsf L^{\prime K+Z}
\ar[r]_-{\lax {K+Z} \lax {K+Z}'} &
\mathsf L \mathsf L'}
$$
$$
\xymatrix@C=60pt{
(\mathsf L \mathsf L')^K \ar[r]^-{\tau_{2K}} 
\ar[d]_-{[k_1][k_1]'\cdots [k_n][k_n]'} &
\mathsf L^K \mathsf L^{\prime K}
\ar@{}[rd]|-{\Longdownarrow {\phi_{k_1,\dots,  k_n}\phi'_{k_1,\dots , k_n}\ \ }}
\ar[d]_-{[k_1]\cdots [k_n][k_1]'\cdots [k_n]'} 
\ar@/^1.7pc/[rd]^-{\lax K \lax K'} \\
\mathsf L^n \mathsf L^{\prime n} 
(\mathsf L \mathsf L')^{K+Z}
\ar[r]_-{\tau_{2(K+Z)}} &
\mathsf L^{K+Z} \mathsf L^{\prime K+Z}
\ar[r]_-{\lax {K+Z} \lax {K+Z}'} &
\mathsf L \mathsf L'}
$$
where the unlabelled regions denote identity natural transformations.
\end{itemize}
\end{proposition}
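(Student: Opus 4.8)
The plan is to verify directly that the data displayed in the statement satisfy the axioms of Definition~\ref{def:lax+oplax0}, following the template of the proof of Proposition~\ref{prop:Cartesian+} (and, as there, of Example~(6) on page~80 of \cite{DayStreet:lax}). The guiding observation is that every piece of the proposed structure on $\mathsf L\mathsf L'$ is just the ``side-by-side'' juxtaposition of the corresponding piece on $\mathsf L$ and on $\mathsf L'$, conjugated by suitable components $\tau_{2m}$ of the symmetry of $\mathsf{Cat}$. In particular, the functor $[k]$ on $\mathsf L\mathsf L'$ built from $\lax 0=\lax 0\lax 0'$ agrees with the juxtaposition $[k]\,[k]'$ up to whiskering by flip functors (and exactly equals it when $k=0$, since $\mathbbm 1=\mathbbm 1\mathbbm 1$). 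Because the $\tau_{np}$ are \emph{2-natural} transformations satisfying the identities \eqref{eq:tau}, each axiom on $\mathsf L\mathsf L'$ will reduce to the conjunction of the same axiom on $\mathsf L$ and on $\mathsf L'$, after using 2-naturality to slide the relevant 2-cells past the flips and \eqref{eq:tau} to recognise that the accumulated flips on the two boundary paths of each diagram coincide.

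First I would dispatch the three normalisation conditions and the unit diagram \eqref{eq:lax+oplax0unit}. When $K=0$ every $k_i=0$, so each of $\Phi_{k_1,\dots,k_n}$ and $\Phi'_{k_1,\dots,k_n}$ is an identity by Definition~\ref{def:lax+oplax0}; hence so is their juxtaposition whiskered by flips, and a short bookkeeping check using $\tau_{n1}=1$ and $\tau_{np}\cdot\tau_{pn}=1$ shows the source and target $1$-cells agree, giving $\Phi^{\mathsf L\mathsf L'}_{k_1,\dots,k_n}=1$. The same argument with $Z=0$ gives $\phi^{\mathsf L\mathsf L'}_{k_1,\dots,k_n}=1$; and in the case $n=1$, $k_1=0$ one has $\tau_{2\cdot 0}=\tau_{2\cdot 1}=1$, so $\phi^{\mathsf L\mathsf L'}_0=\phi_0\phi'_0=(\iota\cdot 1)(\iota'\cdot 1)=(\iota\iota')\cdot 1$, as required. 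The diagram \eqref{eq:lax+oplax0unit} for $\mathsf L\mathsf L'$ is obtained by juxtaposing \eqref{eq:lax+oplax0unit} for $\mathsf L$ and for $\mathsf L'$ and then invoking 2-naturality of $\tau_{2n}$ to commute the flips through; the triangle of identities survives because juxtaposition and whiskering send identities to identities.

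The substance is the commutativity of the diagram of Figure~\ref{fig:lax+oplax0} for $\mathsf L\mathsf L'$. I would establish this by exhibiting that diagram as the ``product'' of the Figure~\ref{fig:lax+oplax0} diagram for $\mathsf L$ with that for $\mathsf L'$: every object occurring is a flip-conjugate of a juxtaposition $X\,X'$ of an $\mathsf L$-object with the corresponding $\mathsf L'$-object, and every arrow is correspondingly a flip-conjugate of $f\,f'$. Commutativity then follows from commutativity of the two factor diagrams together with (i) 2-naturality of the various $\tau_{np}$, used to move each whiskered 2-cell past the flips it must traverse, and (ii) the identities \eqref{eq:tau} --- crucially the third one, $\tau_{np}\cdot(\tau_{nk_1}\cdots\tau_{nk_p})=\tau_{n\sum_i k_i}$, which matches the flips accumulated when passing from the $\lax{k_1}\cdots\lax{k_n}$-level to the $\lax K$-level, and $\tau_{np}\cdot\tau_{pn}=1$, used to cancel flips appearing in adjacent composites. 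The equalities labelled ``Lemma~\ref{lem:[F]}'' inside Figure~\ref{fig:lax+oplax0} transfer to $\mathsf L\mathsf L'$ because the functors $[k]$ on $\mathsf L\mathsf L'$ are the flip-conjugates of $[k]\,[k]'$ and Lemma~\ref{lem:[F]} already holds in $\mathsf L$ and in $\mathsf L'$.

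The main obstacle is purely organisational: keeping track of precisely which components $\tau_{np}$ sit between which functors at each stage of Figure~\ref{fig:lax+oplax0}, and checking that along the two boundary paths these assemble --- via \eqref{eq:tau} --- into the same flip. This is tedious but mechanical once the reduction above is in place; no idea beyond 2-naturality of the symmetry and the coherence identities \eqref{eq:tau} is required, which is why the detailed verification can safely be left to the reader.
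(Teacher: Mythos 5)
Your proposal is correct and takes essentially the same route as the paper, whose proof simply leaves to the reader the direct verification that the stated data satisfy the axioms of Definition~\ref{def:lax+oplax0} (in the style of Proposition~\ref{prop:Cartesian+}). Your reduction of each axiom on $\mathsf L\mathsf L'$ to the corresponding axioms in the factors, via 2-naturality of the flips $\tau_{np}$ and the identities \eqref{eq:tau}, is precisely the bookkeeping that verification amounts to.
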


\begin{proof}
We leave it to the reader to check that the stated datum satisfies the axioms in Definition \ref{def:lax+oplax0}.
\end{proof}

\begin{definition}
\label{def:lax2oplax0-functor}
A {\em $\mathsf{Lax}^+\mathsf{Oplax}^0$-monoidal functor} consists of
\begin{itemize}
\item a functor $G:\mathsf L \to \mathsf L'$
\item for all non-negative integers $n$, a natural transformation
$\Gamma_n:G\cdot \lax n \to \lax n'\cdot (G\cdots G)$
\end{itemize}
such that for all sequences of non-negative integers $\{k_1,\dots,k_n\}$, for the sequence of natural transformations
$$
\xymatrix@C=17pt{
G^{\overline k_i} \cdot [k_i] 
\ar[r]^-{\lsem k_i \rsem} &
[k_i]' \cdot G^{k_i}}:=
\left\{
\begin{array}{ll}
\arraycolsep=1pt\def\arraystretch{.5}
\xymatrix@C=20pt{G ^{k_i} \ar[r]^-1 & G ^{k_i}} 
& \textrm{if}\quad k_i>0 \\
\xymatrix@C=12pt{
G\cdot  \lax 0
\ar[r]^-{\Gamma_0} &  \lax 0'}
& \textrm{if}\quad k_i=0
\end{array}
\right .
$$
for $i=1,\dots,n$, and using the notation of Section \ref{sec:preli}, the following diagrams commute.
$$
\xymatrix{
G \ar[r]^-{1 \cdot \iota} \ar[rd]_-{\iota'\cdot 1} &
G \cdot \lax 1 \ar[d]^-{\Gamma_1} \\
& \lax 1' \cdot G}
$$
$$
\xymatrix@C=45pt{
G\cdot \lax n \cdot (\raisebox{-1.5pt}{$\lax{k_{1}} \cdots \lax{k_{n}}$})
\ar[d]_-{\Gamma_n\cdot 1}
\ar[r]^-{1\cdot \Phi_{k_1,\dots,k_n}} &
G \cdot \lax{K+Z} \cdot ([k_1]\cdots [k_n])
\ar[d]^-{\Gamma_{K+Z}\cdot 1} &
\ar[l]_-{1\cdot \phi_{k_1,\dots,k_n}}
G \cdot \lax K \ar[dd]^-{\Gamma_K} \\
\lax n'\cdot (G \cdots G) \cdot (\raisebox{-1.5pt}{$\lax{k_{1}} \cdots \lax{k_{n}}$})
\ar[d]_-{1\cdot (\Gamma_{k_1} \cdots \Gamma_{k_n})} &
\lax{K+Z}'\cdot (G \cdots G) \cdot ([k_1]\cdots [k_{n}])
\ar[d]^-{1\cdot(\lsem k_1 \rsem \cdots \lsem k_n \rsem)} \\
\lax n'\cdot (\raisebox{-1.5pt}{$\lax{k_{1}}' \cdots \lax{k_{n}}'$})
\cdot (G \cdots G)
\ar[r]_-{\Phi'_{k_1,\dots,k_n}\cdot 1} &
\lax{K+Z}'\cdot  ([k_1]'\cdots [k_{n}]') \cdot (G \cdots G) &
\ar[l]^-{\phi'_{k_1,\dots,k_n}\cdot 1}
\lax K' \cdot (G \cdots G)}
$$
\end{definition}
Forgetting about the nullary part of the structure, $\mathsf{Lax}^+\mathsf{Oplax}^0$-monoidal functors can be seen $\mathsf{Lax}^+$-monoidal.

Analogously to Proposition \ref{prop:comp_functor}, the following holds.

\begin{proposition}
\label{prop:lax+oplax0-functor}
\begin{itemize}
\item[{(1)}]
The composite of (composable) $\mathsf{Lax}^+\mathsf{Oplax}^0$-monoidal functors $(G,\Gamma)$ and $(H,\Xi)$ is again $\mathsf{Lax}^+\mathsf{Oplax}^0$-monoidal via the natural transformations
$$
\xymatrix{
H\cdot G \cdot \lax n \ar[r]^-{1\cdot \Gamma_n} &
H\cdot  \lax n \cdot (G \cdots G) \ar[r]^-{\Xi_n\cdot 1} &
\lax n \cdot (H\cdots H) \cdot (G \cdots G)=
\lax n \cdot (H\cdot G)^n.}
$$
\item[{(2)}]
The Cartesian product of $\mathsf{Lax}^+\mathsf{Oplax}^0$-monoidal functors $(G,\Gamma)$ and $(H,\Xi)$ is again $\mathsf{Lax}^+\mathsf{Oplax}^0$-monoidal via the natural transformations
$$
\xymatrix@C=60pt{
(\mathsf{LN})^n \ar[r]^-{\tau_{2n}} \ar[d]_-{(GH)^n} &
\mathsf{L}^n \mathsf{N}^n \ar[r]^-{\lax n \lax n} \ar[d]_-{G^nH^n}
\ar@{}[rd]|-{\Longdownarrow {\Gamma_n\Xi_n}}  &
\mathsf{LN} \ar[d]^-{GH} \\
(\mathsf{L'N'})^n \ar[r]_-{\tau_{2n}}  &
\mathsf{L'}^n \mathsf{N'}^n \ar[r]_-{\lax n' \lax n'} &
\mathsf{L'N'}}
$$
where the unlabelled region denotes the identity natural transformation.
\end{itemize}
\end{proposition}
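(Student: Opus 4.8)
The plan is to reduce everything to the already-established statements about $\mathsf{Lax}^+$-monoidal functors together with a short case analysis of the nullary edges. Recall that, forgetting the transformations $\phi_{k_1,\dots,k_n}$ and the nullary data, a $\mathsf{Lax}^+\mathsf{Oplax}^0$-monoidal category (resp.\ functor) is in particular $\mathsf{Lax}^+$-monoidal. Hence Proposition \ref{prop:comp_functor} already shows that the composite $(H\cdot G,\Gamma^{HG})$ of part (1) and the Cartesian product $(GH,\Gamma\Xi)$ of part (2) are $\mathsf{Lax}^+$-monoidal functors between the underlying $\mathsf{Lax}^+$-monoidal categories; in particular the unit triangle involving $\iota$ holds, as does every instance of the hexagonal axiom of Definition \ref{def:lax2oplax0-functor} in which $Z=0$ (where all $[k_i]$, $\lsem k_i\rsem$ and $\phi$ are identities). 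It therefore only remains to verify the general hexagon.

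For part (1), the composite carries the nullary datum $\Gamma^{HG}_0:=\Xi_0\cdot(1\cdot\Gamma_0)$. The first thing I would check is that the transformations $\lsem k_i\rsem^{HG}$ induced by $\Gamma^{HG}$ coincide with the whiskered pastings of $\lsem k_i\rsem^{H}$ and $\lsem k_i\rsem^{G}$: for $k_i>0$ both sides are identities, and for $k_i=0$ this is exactly the definition of $\Gamma^{HG}_0$ together with the interchange law in $\mathsf{Cat}$. Granting this, the hexagon for $H\cdot G$ is obtained by expanding each occurrence of $\Gamma^{HG}$ into its two constituent steps and recognising the enlarged diagram as the hexagon for $(G,\Gamma)$ (suitably whiskered by $H$) pasted onto the hexagon for $(H,\Xi)$ (suitably whiskered by the functors $[k_i]$, $\lax n$, $\lax{K+Z}$ and by copies of $G$), the pieces being slid into place by naturality and interchange. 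This is the same pattern as in Proposition \ref{prop:comp_functor}(1), merely carrying along the extra $\phi$-edges.

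For part (2), the $\mathsf{Lax}^+\mathsf{Oplax}^0$-monoidal structures on $\mathsf L\mathsf N$ and $\mathsf L'\mathsf N'$ are those of Proposition \ref{prop:Lax+Oplax0Cartesian}, built from the symmetry components $\tau$, and the underlying $\mathsf{Lax}^+$-monoidal part of the claim is Proposition \ref{prop:comp_functor}(2). To verify the remaining hexagon for $(GH,\Gamma\Xi)$ I would insert the $2$-naturality squares of $\tau$ so as to split every object and arrow into an $\mathsf L$-layer and an $\mathsf N$-layer; the coherence identities \eqref{eq:tau} for $\tau$ — in particular $\tau_{np}\cdot\tau_{pn}=1$, $\tau_{n1}=1$ and the additivity relation — then let the symmetry components be cancelled, after which the two layers are precisely the hexagon axioms for $(G,\Gamma)$ and for $(H,\Xi)$, while the nullary edges reduce to the definitions of $\Gamma_0$ and $\Xi_0$. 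This is the functor-level analogue of the verification behind Proposition \ref{prop:Lax+Oplax0Cartesian}.

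In both parts no idea is needed beyond Propositions \ref{prop:comp_functor} and \ref{prop:Lax+Oplax0Cartesian} and the identities \eqref{eq:tau}; the only real work — and the place where one could slip — is the purely combinatorial bookkeeping: matching whiskerings by the functors $[k_i]$ when pasting the two hexagons in part (1), and tracking which symmetry component $\tau$ sits on which edge in part (2). Since neither presents any conceptual obstacle, the detailed check can be left to the reader, in keeping with the treatment of the analogous Propositions \ref{prop:comp_functor} and \ref{prop:Lax+Oplax0Cartesian}.
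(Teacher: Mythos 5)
Your proposal is correct and matches the paper, which simply leaves the verification of the axioms of Definition \ref{def:lax2oplax0-functor} to the reader: your reduction to Proposition \ref{prop:comp_functor} for the $Z=0$ instances and the unit triangle, the identification of the composite nullary data $\lsem k_i\rsem$, and the pasting/$\tau$-coherence bookkeeping for the remaining hexagons is exactly the routine check the authors have in mind.
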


\begin{proof}
It is left to the reader to check that the stated datum in both parts satisfies the axioms in Definition \ref{def:lax2oplax0-functor}.
\end{proof}

\begin{definition}
\label{def:lax2oplax0-nattr}
A {\em $\mathsf{Lax}^+\mathsf{Oplax}^0$-monoidal natural transformation} $(G,\Gamma) \to (G',\Gamma')$ is a natural transformation $\omega:G \to G'$ for which the following diagram commutes for all non-negative integers $n$.
$$
\xymatrix@C=45pt{
G\cdot \lax n  \ar[r]^-{\omega \cdot 1} \ar[d]_-{\Gamma_n} &
G'\cdot \lax n \ar[d]^-{\Gamma'_n} \\
\lax n'\cdot (G \cdots G) \ar[r]_-{1\cdot (\omega \cdots \omega)} &
\lax n'\cdot (G'\cdots G')}
$$
\end{definition}
$\mathsf{Lax}^+\mathsf{Oplax}^0$-monoidal natural transformations are in particular $\mathsf{Lax}^+$-monoidal.

Analogously to Proposition \ref{prop:comp_nattr}, the following holds.

\begin{proposition}
\label{prop:2catLax+Oplax0}
All of the composites, the Godement products, and the Cartesian products of (composable) $\mathsf{Lax}^+\mathsf{Oplax}^0$-monoidal natural transformations are again $\mathsf{Lax}^+\mathsf{Oplax}^0$-monoidal. Consequently, there is a strict monoidal 2-category $\mathsf{Lax}^+\mathsf{Oplax}^0$ of $\mathsf{Lax}^+\mathsf{Oplax}^0$-monoidal categories, $\mathsf{Lax}^+\mathsf{Oplax}^0$-monoidal functors and $\mathsf{Lax}^+\mathsf{Oplax}^0$-monoidal natural transformations, admitting a strict monoidal forgetful 2-functor to the 2-category $\mathsf{Lax}^+$ of Proposition \ref{prop:comp_nattr}.
\end{proposition}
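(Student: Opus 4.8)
The plan is to reduce everything to the $\mathsf{Lax}^+$-results already in hand plus the three remarks recorded after Definitions~\ref{def:lax+oplax0}, \ref{def:lax2oplax0-functor} and \ref{def:lax2oplax0-nattr}. The key observation is that the square defining a $\mathsf{Lax}^+\mathsf{Oplax}^0$-monoidal natural transformation in Definition~\ref{def:lax2oplax0-nattr} is \emph{verbatim} the one of Definition~\ref{def:lax+-nattr}, the sole difference being that the index $n$ is now also allowed to be $0$. The underlying functors of a $\mathsf{Lax}^+\mathsf{Oplax}^0$-monoidal functor are $\mathsf{Lax}^+$-monoidal, and the $n>0$ parts of the composite and Cartesian-product structures of Proposition~\ref{prop:lax+oplax0-functor} agree with the corresponding $\mathsf{Lax}^+$-structures of Proposition~\ref{prop:comp_functor}; hence Proposition~\ref{prop:comp_nattr} already supplies all the squares with $n>0$ for composites, Godement products and Cartesian products of $\mathsf{Lax}^+\mathsf{Oplax}^0$-monoidal natural transformations. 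So in each of the three cases only the $n=0$ square remains.

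That $n=0$ chase is the only real work, and it is routine: it is the same elementary argument that establishes the $n>0$ squares in Proposition~\ref{prop:comp_nattr}, with $\lax n$ replaced by $\lax 0$ and the $n$-fold Godement product ``$\omega\cdots\omega$'' replaced by the empty one (the identity) --- positivity of $n$ is nowhere used. Concretely, for a vertical composite $\omega'\omega$ one pastes the $n=0$ squares of $\omega$ and $\omega'$; for a Godement product one uses the definition of the nullary part $\Gamma_0$ of the composite structure from Proposition~\ref{prop:lax+oplax0-functor}(1) together with naturality of the constituent structure morphisms; for a Cartesian product of 2-cells one uses the product structure of Proposition~\ref{prop:lax+oplax0-functor}(2), the $n=0$ instance of the $\tau_{2n}$-square being trivial.

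It remains to assemble the package. The identity functor with identity structure morphisms is $\mathsf{Lax}^+\mathsf{Oplax}^0$-monoidal, identity natural transformations are $\mathsf{Lax}^+\mathsf{Oplax}^0$-monoidal, and together with Proposition~\ref{prop:lax+oplax0-functor}(1) for composition of 1-cells and the first assertion for composition of 2-cells this gives all the data of a 2-category; the 2-category axioms (associativity and unitality of composition, interchange) hold because they hold in $\mathsf{Cat}$ and the $\mathsf{Lax}^+\mathsf{Oplax}^0$-data are merely transported along. The Cartesian product serves as a tensor product, well-defined on objects, 1-cells and 2-cells by Proposition~\ref{prop:Lax+Oplax0Cartesian}, Proposition~\ref{prop:lax+oplax0-functor}(2) and the present proposition, with unit the singleton category $\mathbbm 1$ carrying its unique trivial structure; strict associativity and unitality descend from $\mathsf{Cat}$ because the structure morphisms on both sides of each identity are built by the identical recipe, using the $\tau$-identities \eqref{eq:tau}. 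Finally, the forgetful 2-functor to $\mathsf{Lax}^+$ sends an object to its underlying $\mathsf{Lax}^+$-monoidal category (remark after Definition~\ref{def:lax+oplax0}), a 1-cell to its underlying $\mathsf{Lax}^+$-monoidal functor, that is, it discards $\Gamma_0$ (remark after Definition~\ref{def:lax2oplax0-functor}), and a 2-cell to itself (remark after Definition~\ref{def:lax2oplax0-nattr}); since the $n>0$ part of every composite and Cartesian-product formula coincides with the corresponding $\mathsf{Lax}^+$-formula and $\mathbbm 1$ is sent to $\mathbbm 1$, this assignment strictly preserves identities, all compositions, the Cartesian products and the unit, hence is a strict monoidal 2-functor. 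I do not foresee a genuine obstacle; the $n=0$ diagram chase of the second paragraph is the only point needing care, and it is a transparent copy of an argument already made for $\mathsf{Lax}^+$.
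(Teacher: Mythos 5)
Your proposal is correct and is essentially the paper's own argument: the paper simply leaves the routine verification of the single axiom of Definition \ref{def:lax2oplax0-nattr} (for composites, Godement products and Cartesian products) to the reader, and your write-up fills in exactly that check, together with the equally routine assembly of the 2-category, the Cartesian monoidal structure and the forgetful 2-functor. Your added observation that the $n>0$ squares are literally those already covered by Proposition \ref{prop:comp_nattr} (since the positive-arity parts of the composite and product structures of Proposition \ref{prop:lax+oplax0-functor} coincide with the $\mathsf{Lax}^+$ ones), so that only the $n=0$ square needs a new chase, is a harmless and valid economy rather than a different route.
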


\begin{proof}
The straightforward check of the axiom in Definition \ref{def:lax2oplax0-nattr} in each of the stated cases is left to the reader.
\end{proof}

\begin{definition}
\label{def:comonoid}
A {\em comonoid} in a $\mathsf{Lax}^+\mathsf{Oplax}^0$-monoidal category $(\mathsf L,\Lax,\Phi,\phi,\iota)$ consists of 
\begin{itemize}
\item a cosemigroup $(a,\delta)$ in the $\mathsf{Lax}^+$-monoidal category $(\mathsf L,\Lax,\Phi,\iota)$ and
\item a morphism $\varepsilon:a \to j$ in $\mathsf L$
\end{itemize}
rendering commutative the first (counitality) diagram below.
A {\em comonoid morphism} $(a,\delta,\varepsilon) \to (a',\delta',\varepsilon')$ is a morphism of cosemigroups $\phi:(a,\delta) \to (a',\delta')$ rendering commutative the second diagram as well.
$$
\xymatrix{
a\Lax a \ar[d]_-{\varepsilon \bullet 1} &
a \ar[d]^-\iota \ar[l]_-\delta \ar[r]^-\delta &
a\Lax a \ar[d]^-{1 \bullet \varepsilon} \\
j \Lax a  &
\ar[l]^-{\phi_{0,1}}
\lb a \rb
\ar[r]_-{\phi_{1,0}}&
 a \Lax j }
\qquad \qquad
\xymatrix@R=25pt{
a\ar[r]^-\varepsilon \ar[d]_-\phi &
j \ar@{=}[d] \\
a' \ar[r]_-{\varepsilon'} &
j}
$$
\end{definition}

\begin{theorem}
\label{thm:comonoid}
For any $\mathsf{Lax}^+\mathsf{Oplax}^0$-monoidal category $(\mathsf L,\Lax,\Phi,\phi,\iota)$, the following categories are isomorphic.
\begin{itemize}
\item[{(i)}]
The category of comonoids and their morphisms in $(\mathsf L,\Lax,\Phi,\phi,\iota)$.
\item[{(ii)}]
The category of $\mathsf{Lax}^+\mathsf{Oplax}^0$-monoidal functors $(\mathbbm 1,1,1,1,1) \to (\mathsf L,\Lax,\Phi,\phi,\iota)$ and their $\mathsf{Lax}^+\mathsf{Oplax}^0$-monoidal natural transformations. 
\end{itemize}
Consequently, $\mathsf{Lax}^+\mathsf{Oplax}^0$-monoidal functors preserve comonoids.
\end{theorem}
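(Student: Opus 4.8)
The plan is to follow the template of the proof of Theorem~\ref{thm:cosemigroup}, treating the nullary functor $\lax 0$ and the coherence morphisms $\phi$ as the only source of genuinely new phenomena. First I would unpack what a $\mathsf{Lax}^+\mathsf{Oplax}^0$-monoidal functor $(\mathbbm 1,1,1,1,1)\to(\mathsf L,\Lax,\Phi,\phi,\iota)$ amounts to: an object $a$ of $\mathsf L$ together with, for every $n\in\mathbb N$, a morphism $\delta_n$ from $a$ to the $n$-fold product $a\Lax\cdots\Lax a$ (so that $\delta_0$ is a morphism $\varepsilon\colon a\to j$, since $\lax 0$ applied at $\mathbbm 1$ picks out $j$), subject to $\delta_1=\iota_a$ and, for every sequence $\{k_1,\dots,k_n\}$ of non-negative integers, to the commuting diagram $(\star)_{k_1,\dots,k_n}$ obtained by evaluating the defining diagram of Definition~\ref{def:lax2oplax0-functor} at $\mathbbm 1$. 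As in Theorem~\ref{thm:cosemigroup} all of the source structure collapses, so $(\star)_{k_1,\dots,k_n}$ is a relation between the $\delta_i$, the morphisms $\Phi_{k_1,\dots,k_n}$ and $\phi_{k_1,\dots,k_n}$ of $\mathsf L$, and the copies of $\varepsilon=\delta_0$ that the natural transformations $\lsem k_i\rsem$ produce in those slots $i$ with $k_i=0$.

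Next I would record two specializations. When every $k_i$ is positive ($Z=0$), the diagram of Figure~\ref{fig:lax+oplax0} reduces to that of Definition~\ref{def:lax+} and $(\star)_{k_1,\dots,k_n}$ becomes exactly the diagram~\eqref{eq:n-coass}; hence by Theorem~\ref{thm:cosemigroup} the subfamily $\{\delta_n\}_{n\ge 1}$ is precisely the one generated by the cosemigroup $(a,\delta)$ with $\delta:=\delta_2$, and each $\delta_n$ ($n\ge 1$) is determined by $\delta$ via~\eqref{eq:delta_k+1}. On the other hand, comparing in $(\star)_{0,1}$ the path through $\delta_K$ with the path through $\delta_{K+Z}$ yields exactly the left counitality square of Definition~\ref{def:comonoid} for $(a,\delta,\varepsilon)$, and $(\star)_{1,0}$ gives the right one. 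This shows that a $\mathsf{Lax}^+\mathsf{Oplax}^0$-monoidal functor from $\mathbbm 1$ determines a comonoid, with the expected underlying object, cosemigroup and counit.

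For the converse I would start from a comonoid $(a,\delta,\varepsilon)$, take the family $\{\delta_n\}_{n\ge 1}$ attached to the cosemigroup $(a,\delta)$ by Theorem~\ref{thm:cosemigroup}, and set $\delta_0:=\varepsilon$. The work is to verify $(\star)_{k_1,\dots,k_n}$ for every finite sequence of non-negative integers, which I would do by induction on $K+Z$. The base cases $K+Z=1$ are $\{1\}$ (a unitality axiom, already handled in Theorem~\ref{thm:cosemigroup}) and $\{0\}$ (which reduces to naturality of $\iota$ together with $\phi_0=\iota\cdot 1$). For $K+Z\ge 2$ the sequence $\{k_1,\dots,k_n\}$ is obtained from a sequence with smaller $K+Z$ by one of three moves: prepending a $1$, incrementing $k_1$ (when $k_1>0$), or prepending a $0$. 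The first two are handled as in Theorem~\ref{thm:cosemigroup} --- using~\eqref{eq:delta_k+1} and~\eqref{eq:delta_alt} respectively --- now reading the full diagram of Figure~\ref{fig:lax+oplax0} instead of its $Z=0$ reduction. The third, prepending a $0$, is the new step: it inserts a tensor factor that is sent to $j$ by $\varepsilon=\delta_0$, and the required identity follows from the counitality of $(a,\delta,\varepsilon)$, the naturality of $\Phi$, $\phi$ and $\iota$, and the regions of Figure~\ref{fig:lax+oplax0} in which some $m_i$ equals $0$; it is convenient to first establish an \emph{iterated counitality} lemma stating that applying $\varepsilon$ in any single tensor slot of $\delta_n$ equals $\delta_{n-1}$ followed by a canonical component of $\phi$.

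Finally, the two assignments are mutually inverse: starting from a functor, the uniqueness part of Theorem~\ref{thm:cosemigroup} (via $(\star)_{1,i}$, which is~\eqref{eq:delta_k+1}) recovers $\{\delta_n\}_{n\ge 1}$ from $\delta_2$ while $\delta_0$ is carried along unchanged, and the opposite composite is evidently the identity. For morphisms, a $\mathsf{Lax}^+\mathsf{Oplax}^0$-monoidal natural transformation between the functors attached to $(a,\delta,\varepsilon)$ and $(a',\delta',\varepsilon')$ is a morphism $f\colon a\to a'$ satisfying the diagram of Definition~\ref{def:lax2oplax0-nattr} for all $n\in\mathbb N$; for $n\ge 1$ this reduces, as in Theorem~\ref{thm:cosemigroup}, to the condition that $f$ be a cosemigroup morphism $(a,\delta)\to(a',\delta')$ (and $n\ge 2$ follows from $n=2$), while $n=0$ says exactly that $\varepsilon'\circ f=\varepsilon$; so we obtain an isomorphism of categories. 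The final assertion that $\mathsf{Lax}^+\mathsf{Oplax}^0$-monoidal functors preserve comonoids then follows from Proposition~\ref{prop:lax+oplax0-functor}(1). I expect the \emph{prepend a $0$} step --- propagating the counit through the $\phi$-part of Figure~\ref{fig:lax+oplax0} --- to be the main obstacle, everything else being a dualization of known arguments or a direct reuse of Theorem~\ref{thm:cosemigroup}.
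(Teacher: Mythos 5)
Your proposal matches the paper's proof essentially step for step: the same unpacking of a $\mathsf{Lax}^+\mathsf{Oplax}^0$-monoidal functor out of $\mathbbm 1$ as a family $\{\delta_n\}_{n\in\mathbb N}$ with $\delta_1=\iota$ subject to the evaluated coherence diagram, the same reduction of the positive part to Theorem~\ref{thm:cosemigroup} with counitality extracted from the $(0,1)$ and $(1,0)$ instances, the same converse construction ($\delta_0:=\varepsilon$ plus the iterated $\delta_n$ of \eqref{eq:delta_k+1}) verified by induction on $K+Z$ with exactly the three induction moves (prepend $0$, prepend $1$, increment $k_1$), and the same treatment of morphisms and of mutual inverseness. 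The paper likewise leaves the induction-step verifications (including your ``prepend a $0$'' step) to the reader, so your sketch is at the same level of rigour and takes the same route.
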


\begin{proof}
A $\mathsf{Lax}^+\mathsf{Oplax}^0$-monoidal functor $(\mathbbm 1,1,1,1,1) \to (\mathsf L,\Lax,\Phi,\phi,\iota)$ as in part (ii) consists of 
\begin{itemize}
\item an object $a$ of $\mathsf L$
\item for all non-negative integers $n$, a morphism $\delta_n$ from $a$ to the $n$-fold product $a^{\bullet n}:=a\Lax\cdots \Lax a$ 
for $n>1$, to $a^{\bullet 1}:= \lb a \rb$ for $n=1$, and to $a^{\bullet 0}:=j$ for $n=0$,
\end{itemize}
such that 
\begin{equation} \label{eq:delta_1}
\delta_1=\iota
\end{equation}
and for all sequences of non-negative integers $\{k_1,\dots,$ $k_n\}$,
and for the double sequences $\{\widehat a_{pq}\}_{\stackrel{p=1,\dots,n}{{}_{q=1,\dots,\overline k_p}}}$ of elements in $\mathsf L$, and 
$\{\langle pq \rangle \}_{\stackrel{p=1,\dots,n}{{}_{q=1,\dots,\overline k_p}}}$ of morphisms in $\mathsf L$, given by 
$$
\widehat a_{pq}=\left\{
\begin{array}{ll}
\arraycolsep=1pt\def\arraystretch{.5}
a & \textrm{ if } k_p>0\\
j & \textrm{ if } k_p=0,
\end{array}
\right.
\qquad  \qquad 
\xymatrix@C=20pt{a \ar[r]^-{\langle pq \rangle} & \widehat a_{pq}} =\left\{
\arraycolsep=1pt\def\arraystretch{.5}
\begin{array}{ll}
\xymatrix@C=12pt{a \ar[r]^-1 &a} & \textrm{ if } k_p>0\\
\xymatrix@C=12pt{a \ar[r]^-{\delta_0} &j}  & \textrm{ if } k_p=0.
\end{array}
\right.
$$
the following diagram commutes.
\begin{equation} \label{eq:+0coass}
\xymatrix@R=20pt@C=60pt{
a^{\bullet n}
\ar[dd]_-{\delta_{k_1} \bullet \cdots \bullet \delta_{k_n}} &
a \ar[l]_-{\delta_n} \ar@{=}[r] \ar[d]^-{\delta_{K+Z}} &
a \ar[dd]^-{\delta_K} \\
& a^{\bullet K+Z} 
\ar[d]^-{\langle 11 \rangle \bullet \cdots \bullet \langle 1\overline k_1 \rangle
\bullet \cdots \bullet \langle n1 \rangle \bullet \cdots \bullet \langle n\overline k_n \rangle} \\
a^{\bullet k_1} \Lax\cdots \Lax a^{\bullet k_n}
\ar[r]_-{\Phi_{k_1,\dots,k_n}} &
\widehat a_{11} \Lax \cdots \Lax \widehat a_{1\overline k_1} \Lax \cdots \Lax \widehat a_{n1}\Lax \cdots \Lax \widehat a_{n\overline k_n} &
\ar[l]^-{\phi_{k_1,\dots,k_n}} a^{\bullet K}}
\end{equation}

If $Z=0$ then the region on the left of \eqref{eq:+0coass} reduces to \eqref{eq:n-coass}. 
So we conclude by Theorem \ref{thm:cosemigroup} that $(a,\delta_2)$ is a cosemigroup. Evaluating the region on the right of \eqref{eq:+0coass} at $n=2$, $k_1=0$, $k_2=1$ and $n=2$, $k_1=1$, $k_2=0$ we infer that $\delta_0$ is a counit for it.

Conversely, if $(a,\delta,\varepsilon)$ is a comonoid in $(\mathsf L,\Lax,\Phi,\phi,\iota)$ then we put $\delta_0:=\varepsilon$, $\delta_1:=\iota$, $\delta_2:=\delta$ and for $i>1$ we define $\delta_{i+1}$ iteratively as in \eqref{eq:delta_k+1} (so that it satisfies \eqref{eq:delta_alt}). By the following similar procedure to that in the proof of Theorem \ref{thm:cosemigroup}, one checks by induction on $K+Z$ that \eqref{eq:+0coass} commutes for the so defined family of morphisms, for all sequences of non-negative integers $\{k_1,\dots,$ $k_n\}$.

If $K+Z=0$ then $n=0$ and \eqref{eq:+0coass} reduces to a trivial identity. If $K+Z=1$ then $n=1$ and $k_1$ is either $0$ or $1$. If $k_1=0$ then \eqref{eq:+0coass} commutes by the axioms $\Phi_{0}=1$ and $\phi_{0}=\iota \cdot 1$
of $\mathsf{Lax}^+\mathsf{Oplax}^0$-monoidal category.
If $k_1=1$ then \eqref{eq:+0coass} commutes by the axiom $\phi_{1}=1$ of $\mathsf{Lax}^+\mathsf{Oplax}^0$-monoidal category and the upper triangle of \eqref{eq:lax+oplax0unit} for $n=1$.

There are now three kinds of the induction step:
\begin{itemize}
\item replacing $\{k_1,\dots,k_n\}$ with $\{0,k_1,\dots,k_n\}$,
\item replacing $\{k_1,\dots,k_n\}$ with $\{1,k_1,\dots,k_n\}$, and
\item replacing $\{k_1,\dots,k_n\}$ with $\{1+k_1,\dots,k_n\}$ if $k_1>0$.
\end{itemize}
We leave it to the reader to check that if \eqref{eq:+0coass} commutes for the original sequence $\{k_1,\dots,k_n\}$ then it also commutes for the modified sequence in each of the listed cases.

These constructions are mutually inverse bijections by \eqref{eq:delta_1} and by \eqref{eq:+0coass} at $n=1$, $k_1=1$, and $k_2=i$ (see the proof of Theorem \ref{thm:cosemigroup}).

A $\mathsf{Lax}^+\mathsf{Oplax}^0$-monoidal natural transformation $(a,\{\delta_i\}_{i\in \mathbb N}) \to (a',\{\delta'_i\}_{i\in \mathbb N})$ is obviously a comonoid morphism $(a,\delta_2,\delta_0) \to (a',\delta'_2,\delta'_0)$. Conversely, a comonoid morphism $(a,\delta,\varepsilon) \to (a',\delta',\varepsilon')$ is compatible with the unary parts $\delta_1=\iota=\delta'_1$ of the corresponding $\mathsf{Lax}^+\mathsf{Oplax}^0$-monoidal functors by the naturality of $\iota$ and it is compatible with their higher components by Theorem \ref{thm:cosemigroup}.
\end{proof}

\begin{definition} \label{def:comonoid_comodule}
A {\em comodule} of a comonoid $(a,\delta,\varepsilon)$ in a $\mathsf{Lax}^+\mathsf{Oplax}^0$-monoidal category $(\mathsf L,\Lax,\Phi,\phi,\iota)$ 
is a comodule $(x,\varrho: x \to x \Lax a)$ in the sense of Definition \ref{def:comodule} over the underlying cosemigroup $(a,\delta)$ such that also the following (counitality) diagram commutes.
$$
\xymatrix{
x \ar[r]^-\varrho \ar[d]_-\iota &
x \Lax a  \ar[d]^-{1 \bullet \varepsilon}  \\
\lb x \rb \ar[r]_-{\phi_{1,0}} &
x \Lax j  }
$$
A {\em morphism of comodules} over a comonoid is a morphism of comodules over the underlying cosemigroup in the sense of Definition \ref{def:comodule}.
\end{definition}

\begin{example} \label{ex:comonoid_reg_comod}
Comparing the counitality conditions of Definition \ref{def:comonoid} and Definition \ref{def:comonoid_comodule}, it follows immediately from Example \ref{ex:reg_comod} that $(a,\delta)$ is a comodule over an arbitrary comonoid $(a,\delta,\varepsilon)$ in any $\mathsf{Lax}^+\mathsf{Oplax}^0$-monoidal category.
\end{example}

Generalizing a construction in \cite{CaenepeelGoyvaerts}, we obtain the following example of $\mathsf{Lax}^+\mathsf{Oplax}^0$-monoidal category (in whose presentation below the notation of Section \ref{sec:preli} is used).

\begin{theorem} \label{thm:Lax}
Associated to any monoidal category $(\mathsf V,\otimes, I)$, there is a strictly normal $\mathsf{Lax}^+\mathsf{Oplax}^0$-monoidal category $(\mathsf L,\otimes,\Phi,\phi,1)$ as follows.
\begin{itemize}
\item In the category $\mathsf L$, an \underline{object} consists of an object $a$ of $\mathsf V$ together with two morphisms $\alpha,\beta:a\to a$ such that $\alpha\cdot \beta=\beta\cdot \alpha$. 

\noindent
A \underline{morphism} $(a,\alpha,\beta) \to (a',\alpha',\beta')$ is a morphism $\vartheta:a\to a'$ such that $\vartheta\cdot\alpha=\alpha'\cdot \vartheta$ and $\vartheta\cdot\beta=\beta'\cdot \vartheta$.
\item For $n=0$ we put $(I,1,1):\mathbbm 1 \to \mathsf L$,
for $n=1$ we take the identity functor $\mathsf L \to \mathsf L$
and for any $n>1$ we take the $n$-fold monoidal product functor $\mathsf L^n \to \mathsf L$,
\begin{eqnarray*}
&&\xymatrix@C=50pt{
\{(a_1,\alpha_1,\beta_1),\dots,(a_n,\alpha_1,\beta_n)\}
\ar[r]^-{\{\vartheta_1,\dots,\vartheta_n\}} &
\{(a'_1,\alpha'_1,\beta'_1),\dots,(a'_n,\alpha'_1,\beta'_n)\}}
\mapsto \\
&& \xymatrix@C=30pt{
(\!a_1\!\otimes \cdots \otimes \!a_n,
\alpha_1\!\otimes \cdots \!\otimes \alpha_n,
\beta_1\!\otimes \cdots \otimes \!\beta_n)
\ar[r]^-{\vartheta_1\!\otimes\cdots \otimes\!\vartheta_n} &
(\!a'_1\!\otimes \cdots \otimes \!a'_n,
\alpha'_1\!\otimes \cdots \otimes \!\alpha'_n,
\beta'_1\!\otimes \cdots \otimes \!\beta'_n).}
\end{eqnarray*}
\item For any sequence of non-negative integers $\{k_1,\dots,k_n\}$, and for any collection of objects $\{(a_{ij},\alpha_{ij},\beta_{ij})\}_{\stackrel{i=1,\dots,n}{{}_{j=1,\dots,k_i}}}$, the natural transformation $\Phi_{k_1,\dots,k_n}$ is taken to be
$$
\otimes_{i=1}^n
\otimes_{j=1}^{k_i} 
\alpha_{ij}^{\sum_{p=i+1}^n(\overline k_p-1)}
\beta_{ij}^{\sum_{p=1}^{i-1}(\overline k_p-1)}:
\otimes_{i=1}^n
\otimes_{j=1}^{k_i} a_{ij} \to 
\otimes_{i=1}^n
\otimes_{j=1}^{k_i} a_{ij},
$$
and the natural transformation 
$\phi_{k_1,\dots,k_n}$ is taken to be
$$
\otimes_{i=1}^n
\otimes_{j=1}^{k_i} 
\alpha_{ij}^{\sum_{p=i+1}^n Z(k_p)}
\beta_{ij}^{\sum_{p=1}^{i-1} Z(k_p)}:
\otimes_{i=1}^n
\otimes_{j=1}^{k_i} a_{ij} \to 
\otimes_{i=1}^n
\otimes_{j=1}^{k_i} a_{ij}.
$$
\end{itemize}
\end{theorem}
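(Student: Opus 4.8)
The plan is a direct verification of the axioms of Definition~\ref{def:lax+oplax0}, together with strict normality. The decisive feature of the construction is that it is \emph{diagonal}: on a collection $\{(a_{ij},\alpha_{ij},\beta_{ij})\}$ (indices $i=1,\dots,n$, $j=1,\dots,k_i$), both composite functors $\lax n\cdot(\lax{k_1}\cdots\lax{k_n})$ and $\lax{K+Z}\cdot([k_1]\cdots[k_n])$ return the object of $\mathsf L$ whose underlying object of $\mathsf V$ is $\bigotimes_{i,j}a_{ij}$, with a copy of $I$ inserted for each empty block ($k_i=0$), equipped with the structure maps $\bigotimes_{i,j}\alpha_{ij}$ and $\bigotimes_{i,j}\beta_{ij}$ completed by identities on those copies of $I$; in particular the source and target of $\Phi_{k_1,\dots,k_n}$ and of $\phi_{k_1,\dots,k_n}$ coincide, so the stated formulas do define endomorphisms. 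First I would dispose of the well-definedness items: $\mathsf L$ is a category because a composite of morphisms each commuting with both $\alpha$ and $\beta$ again commutes with them; each $\lax n$ is a functor because the tensor product of two commuting pairs of endomorphisms is a commuting pair; and $\Phi$, $\phi$ are natural transformations because each component is a tensor product of endomorphisms $\alpha_{ij}^{c}\beta_{ij}^{d}$ --- which is a morphism of $\mathsf L$ since $\alpha_{ij}$ and $\beta_{ij}$ commute --- with naturality holding factorwise, pushing each $\vartheta_{ij}$ through $\alpha_{ij}^{c}\beta_{ij}^{d}$ by means of $\vartheta_{ij}\alpha_{ij}=\alpha'_{ij}\vartheta_{ij}$ and $\vartheta_{ij}\beta_{ij}=\beta'_{ij}\vartheta_{ij}$.

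Next I would dispatch the elementary axioms. If $K=0$ there are no indices $(i,j)$, so $\Phi_{k_1,\dots,k_n}$ is an empty tensor product, i.e.\ an identity; if $Z=0$ every exponent $\sum_p Z(k_p)$ occurring in $\phi_{k_1,\dots,k_n}$ vanishes, so $\phi_{k_1,\dots,k_n}$ is an identity; and the same empty-product remark gives $\phi_0=1$, which is consistent with $\iota\cdot 1$ because we take $\iota=1$ and $\lax 1=\mathrm{id}$ --- which is also precisely what makes the category strictly normal. The unit diagram~\eqref{eq:lax+oplax0unit} then reduces to the observation that $\Phi_n$ and $\Phi_{1,\dots,1}$ are identities, since all the exponents $\overline{k_p}-1$ appearing in them equal $0$, while $\iota\cdot 1$ and $1\cdot(\iota\cdots\iota)$ are identities by $\iota=1$.

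The substantial step, which I expect to be the main obstacle, is the commutativity of Figure~\ref{fig:lax+oplax0}. I would first check that the ``$=$''-arrows there are honest identities of functors: $([m_1]\cdots[m_n])\cdot(\lax{k_{11}}\cdots\lax{k_{nm_n}})$ equals $\lax{\widetilde k_{11}}\cdots\lax{\widetilde k_{n\overline m_n}}$ whether or not the $m_i$ vanish, and the two equalities marked ``Lemma~\ref{lem:[F]}'' follow by applying the two diagrams of Lemma~\ref{lem:[F]} blockwise to each inner sequence $\{k_{i1},\dots,k_{im_i}\}$ (with the $K$ there taken to be $K_i$), the arities matching by $\sum_{i,j}(\widehat k_{ij}+Z(\widehat k_{ij}))=\sum_{i,j}(\widetilde k_{ij}+Z(\widetilde k_{ij}))=K+\widetilde Z$. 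After these identifications, evaluating the diagram at a triple-indexed collection $\{(a_{ijl},\alpha_{ijl},\beta_{ijl})\}$ turns it into a diagram in $\mathsf V$ all of whose vertices are the single object $\bigotimes a_{ijl}$ (with inserted copies of $I$) and all of whose arrows are diagonal endomorphisms $\bigotimes_{i,j,l}\alpha_{ijl}^{c}\beta_{ijl}^{d}$; since composing such endomorphisms adds exponents and $\otimes$ is a functor, commutativity reduces, leaf by leaf and separately for $\alpha$ and for $\beta$, to matching the total exponent of $\alpha_{ijl}$ (respectively $\beta_{ijl}$) along the two boundary paths. Along either path each instance of $\Phi$ contributes a sum of terms $\overline{s}-1$ and each instance of $\phi$ a sum of terms $Z(s)$, with $s$ running over the sizes of the blocks lying on the appropriate side of the leaf, and the two totals are equated by the bookkeeping relations of Section~\ref{sec:preli}, notably $\overline m_i=m_i+Z(m_i)$, $Z(m_i)=Z(K_i+Z_i)$, $\widetilde Z=Z+\sum_i Z(K_i+Z_i)$, together with the defining case distinctions for $\widetilde k_{ij}$ and $\widehat k_{ij}$. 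For instance, in the leftmost region both routes --- via $\Phi_{m_1,\dots,m_n}$ then $\Phi_{\widetilde k_{11},\dots,\widetilde k_{n\overline m_n}}$, or via $(\Phi_{k_{i1},\dots,k_{im_i}})_{i=1}^{n}$ then $\Phi_{K_1+Z_1,\dots,K_n+Z_n}$ --- assign to a leaf in the $i_0$-th super-block and the $j_0$-th of its sub-blocks the $\alpha$-exponent $\sum_{q=j_0+1}^{m_{i_0}}(\overline{k_{i_0q}}-1)+\sum_{p=i_0+1}^{n}(\overline{K_p+Z_p}-1)$, the two expressions coinciding by $\sum_{q}(\overline{\widetilde k_{pq}}-1)=K_p+Z_p-m_p$ and $Z(m_p)=Z(K_p+Z_p)$. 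I expect the genuinely delicate cases to be leaves lying in blocks that become empty under a contraction, $k_{ij}=0$ or $K_i=0$, where the count of factors to the left and to the right really depends on the $\overline{(\,\cdot\,)}$- and $Z(\,\cdot\,)$-corrections; keeping $\widetilde k$ and $\widehat k$ straight in exactly the form of Lemma~\ref{lem:[F]} is what makes the two sides agree, and carrying out the analogous check for the remaining faces of Figure~\ref{fig:lax+oplax0} completes the verification.
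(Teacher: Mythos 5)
Your proposal is correct and follows essentially the same route as the paper's proof: after disposing of the trivial axioms ($\Phi=1$ for $K=0$, $\phi=1$ for $Z=0$, triviality of $\Phi_n$ and $\Phi_{1,\dots,1}$, strict normality via $\iota=1$), you reduce each region of Figure \ref{fig:lax+oplax0} — with the ``$=$''-arrows identified via Lemma \ref{lem:[F]} — to a leafwise comparison of the exponents of the commuting endomorphisms $\alpha_{ijl}$ and $\beta_{ijl}$, exactly as the paper does; your sample computation for the leftmost region is the $\alpha$-mirror of the paper's first displayed identity and is carried out correctly. The only difference is presentational: the paper records all four exponent identities explicitly (for $\beta$, invoking symmetry for $\alpha$) and declares them easily checked, while you verify one in detail and assert the remaining faces follow by the same bookkeeping.
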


\begin{proof}
Obviously, $\Phi_{k_1,\dots,k_n}=1$ if $K=0$ and $\phi_{k_1,\dots,k_n}=1$ if $Z=0$. Since also $\Phi_{n}$ and $\Phi_{1,\dots,1}$ are trivial for all positive integers $n$, we only need to check the commutativity of the regions of the diagram of Figure \ref{fig:lax+oplax0} for the stated natural transformations. For each region of Figure \ref{fig:lax+oplax0} this means a comparison of endomorphisms of 
$\otimes_{i=1}^n
\otimes_{j=1}^{m_i}
\otimes_{l=1}^{k_{ij}} a_{ijl}
$ for non-negative integers $n$, $\{m_i\}_{i=1,\dots,n}$, 
and $\{k_{ij}\}_{\stackrel {i=1,\dots,n}{{}_{j=1,\dots,m_i}}}$; 
and for all collections $\{(a_{ijl},\alpha_{ijl},\beta_{ijl})\}\!\!\!
\raisebox{-2pt}{
{\scalebox{.5}{$
\renewcommand*{\arraystretch}{0}
\begin{array}{l}
i=1,\dots,n \\
j=1,\dots,m_i\\
\,l=1,\dots,k_{ij}
\end{array}$}}}
$ of objects in $\mathsf L$. 
Each of these endomorphisms is a monoidal product of endomorphisms of the individual factors $a_{ijl}$, and each of the occurring endomorphisms of $a_{ijl}$ is a composite of copies of the commuting morphisms $\alpha_{ijl}$ and $\beta_{ijl}$. So we only need to compare the resulting exponents of $\alpha_{ijl}$ and of $\beta_{ijl}$ for all values of $i,j,l$. 
We only do it for $\beta_{ijl}$ in each case; the computations for $\alpha_{ijl}$ are symmetric.

Then the four regions of the diagram of Figure \ref{fig:lax+oplax0} amount to the respective conditions
\begin{eqnarray*}
&&
\sum_{p=1}^{i-1}(\overline m_p-1)+
\sum_{p=1}^{i-1} \sum_{q=1}^{\overline m_p}
(\overline{\widetilde k}_{pq}-1) +
\sum_{q=1}^{j-1} (\overline{\widetilde k}_{iq}-1)=
\sum_{q=1}^{j-1} (\overline{k}_{iq}-1)+
\sum_{p=1}^{i-1}(\overline{K_p+Z}_p-1) \\
&& \sum_{q=1}^{j-1} Z(k_{iq}) +
\sum_{p=1}^{i-1}(\overline{K_p+Z}_p\!-\!1) =
\sum_{p=1}^{i-1}(\overline K_p\!-\!1) +
\sum_{p=1}^{i-1} \sum_{q=1}^{\overline m_p} Z(\widehat k_{pq}) +
\sum_{q=1}^{j-1} Z(\widehat k_{iq}) \\
&& \sum_{p=1}^{i-1} Z(m_p)\! +\!
\sum_{p=1}^{i-1} \sum_{q=1}^{\overline m_p}
(\overline{\widetilde k}_{pq}\!-\!1)\!+\!
\sum_{q=1}^{j-1} (\overline{\widetilde k}_{iq}-1)\!=\!
\sum_{p=1}^{i-1} \sum_{q=1}^{\overline m_p}(\overline{k}_{pq}\!-\!1)\!+\!
\sum_{q=1}^{j-1} (\overline{k}_{iq}-1)\!+\!
\sum_{p=1}^{i-1} Z(K_p+Z_p) \\
&& \sum_{p=1}^{i-1} Z_p +
\sum_{q=1}^{j-1} Z(k_{iq}) +
\sum_{p=1}^{i-1} Z(K_p+Z_p) =
\sum_{p=1}^{i-1} Z(K_p)+
\sum_{p=1}^{i-1} \sum_{q=1}^{\overline m_p} Z(\widehat k_{pq})+
\sum_{q=1}^{j-1} Z(\widehat k_{iq})
\end{eqnarray*}
for all $i=1,\dots,n$ and $j=1,\dots,m_i$, which are easily checked to hold.
\end{proof}

A straightforward application of Definition \ref{def:comonoid} yields the following.

\begin{theorem}
\label{thm:comon_lax+fun}
For any monoidal category $(\mathsf V,\otimes,I)$ the following categories are pairwise isomorphic.
\begin{itemize}
\item[{(1.i)}] The category of cosemigroups in the $\mathsf{Lax}^+$-monoidal category $\mathsf L$ of Theorem \ref{thm:Lax}.
\item[{(1.ii)}] The category of \BiHom-comonoids in $\mathsf V$.
\end{itemize}
and 
\begin{itemize}
\item[{(2.i)}] The category of comonoids in the $\mathsf{Lax}^+\mathsf{Oplax}^0$-monoidal category $\mathsf L$ of Theorem \ref{thm:Lax}.
\item[{(2.ii)}] The category of counital \BiHom-comonoids in $\mathsf V$.
\end{itemize}
\end{theorem}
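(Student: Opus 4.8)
The strategy is to unwind the definitions on both sides and observe that they coincide on the nose, so that the asserted isomorphisms are the evident identity-on-underlying-data assignments; the only real input is the evaluation of the coherence morphisms $\Phi$ and $\phi$ of Theorem~\ref{thm:Lax} in the two low-degree cases needed.

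For (1.i)$\,\cong\,$(1.ii): by Definition~\ref{def:cosemigroup}, a cosemigroup in the $\mathsf{Lax}^+$-monoidal category $(\mathsf L,\otimes,\Phi,\phi,1)$ of Theorem~\ref{thm:Lax} is an object $(a,\alpha,\beta)$ of $\mathsf L$ --- an object $a$ of $\mathsf V$ with commuting endomorphisms $\alpha,\beta$ --- together with a morphism $\delta$ of $\mathsf L$ from $(a,\alpha,\beta)$ to $(a,\alpha,\beta)\otimes(a,\alpha,\beta)=(a\otimes a,\alpha\otimes\alpha,\beta\otimes\beta)$, subject to coassociativity. That $\delta\colon a\to a\otimes a$ underlies a morphism of $\mathsf L$ says precisely that $\delta\cdot\alpha=(\alpha\otimes\alpha)\cdot\delta$ and $\delta\cdot\beta=(\beta\otimes\beta)\cdot\delta$, i.e.\ that $\alpha$ and $\beta$ preserve the comultiplication. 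Since $\mathsf L$ is strictly normal ($\iota=1$ and $\lax 1$ is the identity functor), the coassociativity condition of Definition~\ref{def:cosemigroup} reads $\Phi_{2,1}\cdot(\delta\otimes 1)\cdot\delta=\Phi_{1,2}\cdot(1\otimes\delta)\cdot\delta$. Evaluating the formula for $\Phi_{k_1,\dots,k_n}$ from Theorem~\ref{thm:Lax} at $n=2$ gives $\Phi_{2,1}=1\otimes 1\otimes\beta$ and $\Phi_{1,2}=\alpha\otimes 1\otimes 1$ (the only surviving exponent being $\overline k_1-1=1$, appearing as $\beta$ on the last tensorand of $\Phi_{2,1}$ and, symmetrically, $\overline k_2-1=1$ appearing as $\alpha$ on the first tensorand of $\Phi_{1,2}$). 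Hence coassociativity becomes $(1\otimes 1\otimes\beta)\cdot(\delta\otimes 1)\cdot\delta=(\alpha\otimes 1\otimes 1)\cdot(1\otimes\delta)\cdot\delta$, which is precisely the arrow-reversed first diagram of \eqref{eq:BiHom}, i.e.\ the coassociativity axiom of a \BiHom-comonoid. A morphism of cosemigroups is a morphism $\phi$ of $\mathsf L$ (so $\phi\cdot\alpha=\alpha'\cdot\phi$ and $\phi\cdot\beta=\beta'\cdot\phi$) with $(\phi\otimes\phi)\cdot\delta=\delta'\cdot\phi$, i.e.\ exactly a morphism of \BiHom-comonoids; this yields the isomorphism.

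For (2.i)$\,\cong\,$(2.ii): by Definition~\ref{def:comonoid}, a comonoid in $\mathsf L$ is a cosemigroup $(a,\delta)$ as above together with a morphism $\varepsilon\colon(a,\alpha,\beta)\to j=(I,1,1)$ of $\mathsf L$ satisfying the counitality diagram. Being a morphism of $\mathsf L$ forces $\varepsilon\cdot\alpha=\varepsilon=\varepsilon\cdot\beta$, i.e.\ the counit is preserved by $\alpha$ and $\beta$; and, using $\iota=1$ and the coherence identification $I\otimes a=a$, the counitality diagram reads $(\varepsilon\otimes 1)\cdot\delta=\phi_{0,1}$ and $(1\otimes\varepsilon)\cdot\delta=\phi_{1,0}$. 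Evaluating the formula for $\phi_{k_1,\dots,k_n}$ at $n=2$ gives $\phi_{0,1}=\beta$ and $\phi_{1,0}=\alpha$ (the surviving exponents being $Z(k_1)=Z(0)=1$, resp.\ $Z(k_2)=Z(0)=1$), so the counitality diagram becomes $(\varepsilon\otimes 1)\cdot\delta=\beta$ and $(1\otimes\varepsilon)\cdot\delta=\alpha$, which is precisely the arrow-reversed second diagram of \eqref{eq:BiHom}, i.e.\ the counitality axiom of a counital \BiHom-comonoid. Combining this with part~(1), and noting that a comonoid morphism is a cosemigroup morphism $\phi$ additionally satisfying $\varepsilon'\cdot\phi=\varepsilon$, one obtains the isomorphism.

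The only genuine obstacle is the bookkeeping in the two evaluations of the $\Phi$- and $\phi$-formulas of Theorem~\ref{thm:Lax} --- in particular tracking which of $\alpha,\beta$ lands on which tensorand --- together with reading the coherence conventions (no associators, $I\otimes a=a$) correctly, so that e.g.\ $\phi_{0,1}$ is genuinely an endomorphism of $a$ in $\mathsf L$. Once these four components are in hand the two diagram comparisons are immediate, and the conventions in question are exactly those fixed at the end of the Introduction, so no further work is needed there.
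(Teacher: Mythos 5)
Your proposal is correct and follows essentially the same route as the paper's proof: unwind Definitions \ref{def:cosemigroup} and \ref{def:comonoid} in the strictly normal category $\mathsf L$, evaluate the coherence morphisms of Theorem \ref{thm:Lax} to get $\Phi_{2,1}=1\otimes 1\otimes\beta$, $\Phi_{1,2}=\alpha\otimes 1\otimes 1$, $\phi_{0,1}=\beta$, $\phi_{1,0}=\alpha$, and match the resulting diagrams with the arrow-reversed diagrams of \eqref{eq:BiHom}, with the (co)module of $\alpha,\beta$-compatibilities absorbed into objects and morphisms of $\mathsf L$. The exponent bookkeeping you record is exactly the computation underlying the paper's identifications, so no further comment is needed.
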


\begin{proof}
In both categories of part (1), an object consists of 
\begin{itemize}
\item an object $(a,\alpha,\beta)$ of $\mathsf L$
\item a morphism $\delta:(a,\alpha,\beta)\to (a\otimes a,\alpha\otimes \alpha,\beta\otimes \beta)$ of $\mathsf L$
\end{itemize}
such that the first diagram of 
$$
\xymatrix@C=60pt{
a\otimes a \ar[d]_-{\delta\otimes 1} &
\ar[l]_-\delta a \ar[r]^-\delta &
a\otimes a \ar[d]^-{1\otimes \delta} \\
a\otimes a \otimes a \ar[r]_-{\Phi_{2,1}=
1\otimes 1 \otimes \beta} &
a\otimes a \otimes a &
a\otimes a \otimes a \ar[l]^-{\Phi_{1,2}=
\alpha\otimes 1 \otimes 1}}
\qquad
\xymatrix{
a\ar[r]^-{\delta} \ar[d]_-\vartheta &
a\otimes a \ar[d]^-{\vartheta \otimes \vartheta} \\
a'\ar[r]_-{\delta'} &
a' \otimes a'}
$$
commutes. In both categories of part (1), a morphism $((a,\alpha,\beta),\delta) \to ((a',\alpha',\beta'),\delta')$ is a morphism $\vartheta:(a,\alpha,\beta) \to (a',\alpha',\beta')$ in $\mathsf L$ such that the second diagram above commutes.

In both categories of part (2), an object consists of 
\begin{itemize}
\item an object $((a,\alpha,\beta),\delta)$ in the isomorphic categories of part (1)
\item a morphism $\varepsilon:(a,\alpha,\beta) \to (I,1,1)$ in $\mathsf L$ 
\end{itemize}
such that the first diagram of
$$
\xymatrix{
a\otimes a \ar[d]_-{\varepsilon \otimes 1} &
\ar[l]_-\delta a \ar[r]^-\delta \ar@{=}[d] &
a\otimes a \ar[d]^-{1\otimes \varepsilon} \\
a & 
\ar[l]^-{\phi_{0,1}=\beta } a \ar[r]_-{\phi_{1,0}=\alpha} &
a}
\qquad \qquad
\xymatrix{
a\ar[r]^-\varepsilon \ar[d]_-\vartheta &
I \ar@{=}[d] \\
a' \ar[r]_-{\varepsilon'} &
I}
$$
commutes. In both categories of part (2), a morphism $((a,\alpha,\beta),\delta,\varepsilon) \to ((a',\alpha',\beta'),\delta',$ $\varepsilon')$ is a morphism $\vartheta:((a,\alpha,\beta),\delta) \to ((a',\alpha',\beta'),\delta')$ in the isomorphic categories of part (1) such that the second diagram above commutes.
\end{proof}

A comodule in the sense of Definition \ref{def:comodule} over a cosemigroup in the 
$\mathsf{Lax}^+$-monoidal category $\mathsf L$ of Theorem \ref{thm:Lax} is precisely the same as a comodule in the sense of \cite[Definition 5.3]{GraMakMenPan} over the corresponding \BiHom-comonoid in Theorem \ref{thm:comon_lax+fun}~(1.ii).
A comodule in the sense of Definition \ref{def:comonoid_comodule} over a comonoid in the $\mathsf{Lax}^+\mathsf{Oplax^0}$-monoidal category $\mathsf L$ of Theorem \ref{thm:Lax} is precisely the same as a counital comodule in the sense of \cite[Definition 5.3]{GraMakMenPan} over the corresponding counital \BiHom-comonoid in Theorem \ref{thm:comon_lax+fun}~(2.ii).

The same category $\mathsf L$ of Theorem \ref{thm:Lax} also has a `biased' monoidal structure inherited from $(\mathsf V,\otimes,I)$. In this monoidal category --- let us denote it by $\mathsf M$ --- the monoidal product and the monoidal unit are $\raisebox{-3pt}{$\stackrel {\displaystyle \otimes} {{}_2}$}=\otimes$, respectively, $\raisebox{-3pt}{$\stackrel {\displaystyle \otimes} {{}_0}$}=(I,1,1)$ of $\mathsf L$. Mac Lane's coherence natural isomorphisms are given by the same (omitted) morphisms as in $\mathsf V$.
A cosemigroup (respectively, comonoid) in $(\mathsf M,\raisebox{-3pt}{$\stackrel {\displaystyle \otimes} {{}_2}$},\raisebox{-3pt}{$\stackrel {\displaystyle \otimes} {{}_0}$})$ is a triple consisting of a cosemigroup (respectively, comonoid) in $(\mathsf V,\otimes,I)$ together with two commuting cosemigroup (respectively, comonoid) endomorphisms. Cosemigroup (respectively, comonoid) morphisms in $\mathsf M$ are those cosemigroup (respectively, comonoid) morphisms in $(\mathsf V,\otimes,I)$ which are compatible with the endomorphism parts too.

\begin{proposition} \label{prop:Yau}
Consider an arbitrary monoidal category $(\mathsf V,\otimes,I)$, the associated strictly normal $\mathsf{Lax}^+\mathsf{Oplax}^0$-monoidal category $\mathsf L$
of Theorem \ref{thm:Lax}, and the monoidal category $\mathsf M$ 
of the previous paragraph regarded as a $\mathsf{Lax}^+\mathsf{Oplax}^0$-monoidal category. The identity functor $\mathsf M \to \mathsf L$ carries a $\mathsf{Lax}^+\mathsf{Oplax}^0$-monoidal structure given by the morphisms 
$$
\Gamma_n:=\otimes_{i=1}^n \alpha_i^{n-i} \cdot \beta_i^{i-1}:
\otimes_{i=1}^n a_i \to \otimes_{i=1}^n a_i,
$$
for all non-negative integers $n$ and any sequence of objects $\{(a_i,\alpha_i,\beta_i)\}_{i=1,\dots,n}$.

Consequently, it preserves cosemigroups and comonoids. 
\end{proposition}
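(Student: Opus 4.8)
The plan is to exploit two reductions: that $G$ is, underlyingly, the identity functor of $\mathsf L$, and that $\mathsf M$, when viewed as a $\mathsf{Lax}^+\mathsf{Oplax}^0$-monoidal category, carries the \emph{trivial} coherence data. Under these reductions the axioms of Definition~\ref{def:lax2oplax0-functor} for the pair $(G,\Gamma)$ collapse to a pair of exponent identities that can be checked by the same bookkeeping as in the proof of Theorem~\ref{thm:Lax}.

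First I would record the simplifications. Since $\mathsf M$ is $(\mathsf L,\otimes,I)$ with its Mac Lane coherence isomorphisms suppressed, its induced $n$-ary product functors coincide, on the underlying category $\mathsf L$, with the functors of Theorem~\ref{thm:Lax} (the coherence isomorphisms of $\mathsf V$ being suppressed throughout), while its coherence natural transformations $\Phi^{\mathsf M}$, $\phi^{\mathsf M}$ and unit $\iota^{\mathsf M}$ are all identities; on the target side the unit $\iota$ is likewise the identity, $\mathsf L$ of Theorem~\ref{thm:Lax} being strict normal. As $G=\mathrm{id}$, the functor $(G\cdots G)$ is the identity, so each $\Gamma_n$ is simply a natural endomorphism of the $n$-fold product functor; its naturality, and the fact that it is a morphism of $\mathsf L$, follow factor-by-factor from the commutation of the $\alpha_i$ with the $\beta_i$. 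Finally, the nullary comparison transformations occurring in Definition~\ref{def:lax2oplax0-functor} are all identities as well, since $\Gamma_0$ is the empty $\otimes$-product $1_I$.

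With this in place, the triangle for $\iota$ in Definition~\ref{def:lax2oplax0-functor} reads $1=1$, and the large hexagon breaks into its two constituent cells, which now assert
$$
\Gamma_{K+Z}=\Phi'_{k_1,\dots,k_n}\circ(\Gamma_{k_1}\otimes\cdots\otimes\Gamma_{k_n})\circ\Gamma_n
\qquad\text{and}\qquad
\Gamma_{K+Z}=\phi'_{k_1,\dots,k_n}\circ\Gamma_K ,
$$
understood as endomorphisms of the $\otimes$-product of all the $a_{ij}$, with a copy of $I$ inserted for every vanishing $k_i$, where $\Gamma_{k_i}$ acts on the $i$-th block, $\Gamma_n$, $\Gamma_K$, $\Gamma_{K+Z}$ act on the full product, and the structure morphisms of a block are the $\otimes$-products of those of its entries. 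Exactly as in the proof of Theorem~\ref{thm:Lax}, both sides of each identity are $\otimes$-products of powers of the commuting pair $(\alpha_{ij},\beta_{ij})$ on the factor $a_{ij}$ (and are the identity on each inserted $I$), so it suffices to compare the exponent of $\beta_{ij}$ on the two sides, the computation for $\alpha_{ij}$ being symmetric. For the first identity the right-hand exponent is $(i-1)+(j-1)+\sum_{p=1}^{i-1}(\overline k_p-1)$, contributed by $\Gamma_n$, $\Gamma_{k_i}$ and $\Phi'_{k_1,\dots,k_n}$ respectively, which telescopes to $\sum_{p=1}^{i-1}\overline k_p+(j-1)$, precisely the exponent of $\beta_{ij}$ in $\Gamma_{K+Z}$ since the position of $a_{ij}$ in the padded product is $\sum_{p<i}\overline k_p+j$; for the second identity the right-hand exponent is $\sum_{p=1}^{i-1}k_p+(j-1)+\sum_{p=1}^{i-1}Z(k_p)=\sum_{p=1}^{i-1}\overline k_p+(j-1)$, again matching $\Gamma_{K+Z}$. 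This establishes that $(G,\Gamma)$ is a $\mathsf{Lax}^+\mathsf{Oplax}^0$-monoidal functor.

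The final sentence then follows at once: by Theorem~\ref{thm:comonoid} a $\mathsf{Lax}^+\mathsf{Oplax}^0$-monoidal functor preserves comonoids, and — forgetting the nullary data, so that $(G,\Gamma)$ becomes $\mathsf{Lax}^+$-monoidal — by Theorem~\ref{thm:cosemigroup} it preserves cosemigroups; applying these to the present $(G,\Gamma)$ gives the claim (concretely, a comonoid of $\mathsf V$ equipped with two commuting comonoid endomorphisms $\alpha,\beta$ is carried to the counital \BiHom-comonoid with comultiplication $(\alpha\otimes\beta)\circ\delta$, recovering Yau's twist). I expect the only real work to be in the second and third steps above: setting up the conceptually mild but notationally delicate identification of the $\mathsf{Lax}^+\mathsf{Oplax}^0$-structure on $\mathsf M$ as the trivial one, and then keeping track of the positions in the padded products when units are inserted for the vanishing $k_i$; once this is done the verification is the short exponent count sketched here, entirely parallel to the proof of Theorem~\ref{thm:Lax}.
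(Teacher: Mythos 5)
Your proposal is correct and follows essentially the same route as the paper, whose proof consists precisely of the remark that a comparison of exponents as in the proof of Theorem \ref{thm:Lax} verifies the diagrams of Definition \ref{def:lax2oplax0-functor}; you simply spell out that exponent count (using that $G$ is the identity, the coherence data of $\mathsf M$ and the maps $\lsem k_i\rsem$ are trivial, and $\iota=1$ by strict normality), and your bookkeeping, including the padded positions $\sum_{p<i}\overline k_p+j$, matches the paper's data.
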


The action of the $\mathsf{Lax}^+\mathsf{Oplax}^0$-monoidal functor of Proposition \ref{prop:Yau} on cosemigroups and comonoids was termed in \cite[Proposition 5.9]{GraMakMenPan} the {\em Yau twist}.

\begin{proof}
A similar comparison of exponents as in the proof of Theorem \ref{thm:Lax} shows the commutaivity of the diagrams in Definition \ref{def:lax2oplax0-functor}.
\end{proof}

\begin{remark} \label{rem:invertible}
In the category $\mathsf L$ of Theorem \ref{thm:Lax} take the full subcategory for whose objects $(a,\alpha,\beta)$ the morphisms $\alpha$ and $\beta$ are invertible. It is monoidal with the associativity natural isomorphism 
$$\alpha^{-1} \otimes 1 \otimes \beta''\!:\!
(\!\!(\!a\otimes a')\otimes a''\!,\! (\alpha\otimes \alpha')\otimes \alpha''\!,\!(\beta\otimes \beta')\otimes \beta'')\! \to \!
(a\otimes (a'\otimes a'')\!,\! \alpha\otimes (\alpha'\otimes \alpha'')\!,\!\beta\otimes (\beta'\otimes \beta''\!)\!\!),
$$
left unit natural isomorphism
$$
\beta^{-1}: (I\otimes a,1\otimes \alpha,1\otimes \beta) \to (a,\alpha,\beta)
$$
and right unit natural isomorphism
$$
\alpha^{-1}: (a\otimes I, \alpha \otimes 1,\beta\otimes 1) \to (a,\alpha,\beta).
$$
This is the same monoidal category that was constructed in Section 2 of \cite{GraMakMenPan} under the name ${\mathcal H}^{(-1,0),(0,-1),1}(\mathbb Z \times \mathbb Z,\sf V)$. 
By Theorem \ref{thm:comon_lax+fun} the cosemigroups (respectively, comonoids) therein are those \BiHom-comonoids (respectively, counital \BiHom-comonoids) whose endomorphism parts are automorphisms; thus we re-cover \cite[Remark 3.5]{GraMakMenPan}.

Let us take also in the monoidal category $\mathsf M$ of Proposition \ref{prop:Yau} the full (monoidal) subcategory of those objects $(a,\alpha,\beta)$ whose morphisms $\alpha$ and $\beta$ are invertible; this is the monoidal category $\mathcal H (\mathbb Z \times \mathbb Z,\mathsf V)$ of \cite[Section 2]{GraMakMenPan}. 
As shown in \cite[Theorem 2.5]{GraMakMenPan}, the $\mathsf{Lax}^+\mathsf{Oplax}^0$-monoidal functor of Proposition \ref{prop:Yau} restricts to a strong monoidal isomorphism between these monoidal subcategories; hence it induces isomorphisms between the cosemigroups, as well as the comonoids in these monoidal categories. That is, analogously to \cite[Claim 3.7]{GraMakMenPan}, we obtain an isomorphism between
\begin{itemize}
\item[{(i)}] the full subcategory of the isomorphic categories in Theorem \ref{thm:comon_lax+fun}~(1) for whose objects $((a,\alpha,\beta),\delta)$ the morphisms $\alpha$ and $\beta$ are invertible;
\item[{(ii)}] the category whose objects are triples consisting of a cosemigroup in $\mathsf V$ together with two commuting cosemigroup automorphisms; and the morphisms are the cosemigroup morphisms which commute with both of these automorphisms.
\end{itemize}
It induces an analogous isomorphism also between 
\begin{itemize}
\item[{(i)}] the full subcategory of the isomorphic categories in Theorem \ref{thm:comon_lax+fun}~(2) for whose objects $((a,\alpha,\beta),\delta,\varepsilon)$ the morphisms $\alpha$ and $\beta$ are invertible;
\item[{(ii)}] the category whose objects are triples consisting of a comonoid in $\mathsf V$ together with two commuting comonoid automorphisms; and the morphisms are the comonoid morphisms which commute with both of these automorphisms.
\end{itemize}
Restricting further to those objects for which $\beta=\alpha^{-1}$, the latter isomorphism reduces to \cite[Proposition 1.13]{CaenepeelGoyvaerts}.
\end{remark}

\section{Monoids in $\mathsf{Lax}_0\mathsf{Oplax}_+$-monoidal categories}
\label{sec:Lax0Oplax+}

Dually to the previous section, here we introduce monoids in monoidal categories in which all monoidal products with non-negative number of factors are available; and the monoidal products of positive number of factors are oplax coherent and the nullary product is lax coherent. Unital \BiHom-monoids are described as monoids in such monoidal categories.

\begin{definition}
A {\em $\mathsf{Lax}_0\mathsf{Oplax}_+$-monoidal category} consists of 
\begin{itemize}
\item a category $\mathsf R$
\item for all non-negative integers $n$, a functor $\oplax{n}:{\mathsf R}^{n} \to \mathsf R$ 
\item for all non-negative integers $n,k_1,\dots,k_n$, natural transformations
$$
\xymatrix@C=60pt{
{\mathsf R}^K 
\ar[rr]^-{\oplax {k_1} \ \cdots \ \oplax{k_n}}
\ar[rd]^-{\ [k_1]\cdots [k_n]} 
\ar@/_3pc/[rrdd]_-{\oplax K} &
\ar@{}[d]|-{\qquad\qquad\qquad  \Longuparrow {\Psi_{k_1,\dots,k_n}}} &
{\mathsf R}^n \ar[dd]^-{\oplax n}  \\
& {\mathsf R}^{K+Z}
\ar@{}[d]|-{\qquad\qquad  \Longdownarrow {\psi_{k_1,\dots,k_n}}}
\ar[rd]^-{\oplax {K+Z}}  & \\ 
&& {\mathsf R}}
\qquad\qquad
\raisebox{-17pt}{$
\xymatrix@C=60pt{
{\mathsf R} \ar@{=}@/^1.7pc/[r] \ar@/_1.7pc/[r]_-{\oplax 1}
\ar@{}[r]|-{\Longuparrow \upsilon} &
\mathsf R}$}
$$
\end{itemize}
satisfying the so-called coassociativity and counitality axioms encoded in the diagrams which are obtained from the diagrams of Definition \ref{def:lax+oplax0} by reversing the arrows.

Forgetting about the irrelevant structure, any $\mathsf{Lax}_0\mathsf{Oplax}_+$-monoidal category can be regarded as an $\mathsf{Oplax}_+$-monoidal category.

A $\mathsf{Lax}_0\mathsf{Oplax}_+$-monoidal category is {\em (strictly) normal} if it is (strictly) normal as an $\mathsf{Oplax}_+$-monoidal category.
\end{definition}

$(\mathsf R,\OpLax,\Psi,\psi,\upsilon)$ is a $\mathsf{Lax}_0\mathsf{Oplax}_+$-monoidal category if and only if $(\mathsf R^{\mathsf{op}},\OpLax^{\mathsf{op}},\Psi^{\mathsf{op}},\psi^{\mathsf{op}},$ $\upsilon^{\mathsf{op}})$ is a $\mathsf{Lax}^+\mathsf{Oplax}^0$-monoidal category. 

Throughout, in a $\mathsf{Lax}_0\mathsf{Oplax}_+$-monoidal category we denote by $i$ the image of the single object of $\mathbbm 1$ under the functor $\oplax 0:\mathbbm 1 \to \mathsf R$ and we keep our earlier notation $\lw a \rw$ for the image of any object $a$ under the functor $\oplax 1: \mathsf R \to \mathsf R$; and $a_1 \OpLax \cdots \OpLax a_n$ for the image of an object $(a_1,\dots,a_n)$ under the functor $\oplax n:\mathsf R^n \to \mathsf R$.

Symmetrically to Proposition \ref{prop:Lax+Oplax0Cartesian}, the Cartesian product of $\mathsf{Lax}_0\mathsf{Oplax}_+$-monoidal carries a canonical $\mathsf{Lax}_0\mathsf{Oplax}_+$-monoidal structure.

\begin{definition}
\label{def:lax0oplax2-functor}
A {\em $\mathsf{Lax}_0\mathsf{Oplax}_+$-monoidal functor} $(\mathsf R,\OpLax,\Psi,\psi,\upsilon)\to (\mathsf R',\OpLax',\Psi',\psi',\upsilon')$ consists of
\begin{itemize}
\item a functor $G:\mathsf R \to \mathsf R'$
\item for all non-negative integers $n$, a natural transformation
$\oplax n'\cdot (G\cdots G) \to G\cdot \oplax n$
\end{itemize}
such that the diagrams of Definition \ref{def:lax2oplax0-functor} with reversed arrows commute.
\end{definition}

Analogously to Proposition \ref{prop:lax+oplax0-functor}, the composite of (composable) $\mathsf{Lax}_0\mathsf{Oplax}_+$-monoidal functors is again $\mathsf{Lax}^+\mathsf{Oplax}^0$-monoidal via the evident natural transformations; and the Cartesian product of $\mathsf{Lax}_0\mathsf{Oplax}_+$-monoidal functors is $\mathsf{Lax}^+\mathsf{Oplax}^0$-monoidal via the evident natural transformations too.

\begin{definition}
\label{def:lax0oplax2-nattr}
A {\em $\mathsf{Lax}_0\mathsf{Oplax}_+$-monoidal natural transformation} is a natural transformation for which the diagrams of Definition \ref{def:lax2oplax0-nattr} with reversed arrows commute.
\end{definition}

Symmetrically to Proposition \ref{prop:2catLax+Oplax0}, all of the composites, the Godement products, and the Cartesian products of (composable) $\mathsf{Lax}_0\mathsf{Oplax}_+$-monoidal natural transformations are again $\mathsf{Lax}_0\mathsf{Oplax}_+$-monoidal. Consequently, there is a strict monoidal 2-category $\mathsf{Lax}_0\mathsf{Oplax}_+$ of $\mathsf{Lax}_0\mathsf{Oplax}_+$-mon\-oid\-al categories, $\mathsf{Lax}_0\mathsf{Oplax}_+$-monoidal functors and $\mathsf{Lax}_0\mathsf{Oplax}_+$-monoidal natural transformations, admitting a strict monoidal forgetful 2-functor to the 2-category $\mathsf{Oplax}_+$ of Section \ref{sec:semigroup}.

\begin{definition}
A {\em monoid} in $(\mathsf R,\OpLax,\Psi,\psi,\upsilon)$ is a comonoid in $(\mathsf R^{\mathsf{op}},\OpLax^{\mathsf{op}},\Psi^{\mathsf{op}},\psi^{\mathsf{op}},\upsilon^{\mathsf{op}})$. That is, it consists of 
\begin{itemize}
\item a semigroup $(a,\mu)$ and
\item a morphism $\eta:i \to a$
\end{itemize}
rendering commutative the first (unitality) diagram below.
A {\em morphism of monoids} $(a,\mu,\eta) \to (a',\mu',\eta')$ is a morphism of semigroups $\phi: (a,\mu) \to (a',\mu')$ rendering commutative the second diagram too.
$$
\xymatrix{
i \OpLax a  \ar[d]_-{\eta \circ 1}
\ar[r]^-{\psi_{0,1}}  &
\lw a \rw  \ar[d]^-{\upsilon} &
\ar[l]_-{\psi_{1,0}} a \OpLax i  \ar[d]^-{1 \circ \eta} \\
a\OpLax a  \ar[r]_-\mu&
a    &
a\OpLax a  \ar[l]^-\mu}
\qquad \qquad
\xymatrix@R=27pt{
i \ar[r]^-\eta \ar@{=}[d] &
a \ar[d]^-\phi \\
i \ar[r]_-{\eta'} &
a'}
$$
\end{definition}

\begin{theorem}
\label{thm:monoid}
For any $\mathsf{Lax}_0\mathsf{Oplax}_+$-monoidal category $(\mathsf R,\OpLax,\Psi,\psi,\upsilon)$, the following categories are isomorphic.
\begin{itemize}
\item[{(i)}]
The category of monoids and their morphisms in $(\mathsf R,\OpLax,\Psi,\psi,\upsilon)$.
\item[{(ii)}]
The category of $\mathsf{Lax}_0\mathsf{Oplax}_+$-monoidal functors $(\mathbbm 1,1,1,1,1) \to (\mathsf R,\OpLax,\Psi,\psi,\upsilon)$ and their $\mathsf{Lax}_0\mathsf{Oplax}_+$-monoidal natural transformations. 
\end{itemize}
Consequently, $\mathsf{Lax}_0\mathsf{Oplax}_+$-monoidal functors preserve monoids.
\end{theorem}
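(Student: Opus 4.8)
The plan is to derive Theorem~\ref{thm:monoid} from Theorem~\ref{thm:comonoid} by passing to opposite categories, exactly as Theorem~\ref{thm:semigroup} was deduced from Theorem~\ref{thm:cosemigroup}. The key observation, already recorded in the excerpt, is that $(\mathsf R,\OpLax,\Psi,\psi,\upsilon)$ is a $\mathsf{Lax}_0\mathsf{Oplax}_+$-monoidal category precisely when $(\mathsf R^{\mathsf{op}},\OpLax^{\mathsf{op}},\Psi^{\mathsf{op}},\psi^{\mathsf{op}},\upsilon^{\mathsf{op}})$ is a $\mathsf{Lax}^+\mathsf{Oplax}^0$-monoidal category, and that by definition a monoid in the former is the same thing as a comonoid in the latter. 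So part (i) for $\mathsf R$ is literally the category of comonoids in $\mathsf R^{\mathsf{op}}$.

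Next I would check that the dualisation is compatible on the functor and 2-cell levels, so that the whole category in part (ii) also dualises correctly. Concretely: a $\mathsf{Lax}_0\mathsf{Oplax}_+$-monoidal functor $(\mathbbm 1,1,1,1,1)\to(\mathsf R,\OpLax,\Psi,\psi,\upsilon)$ is, by Definition~\ref{def:lax0oplax2-functor} (whose defining diagrams are those of Definition~\ref{def:lax2oplax0-functor} with arrows reversed), the same as a $\mathsf{Lax}^+\mathsf{Oplax}^0$-monoidal functor $(\mathbbm 1,1,1,1,1)\to(\mathsf R^{\mathsf{op}},\OpLax^{\mathsf{op}},\Psi^{\mathsf{op}},\psi^{\mathsf{op}},\upsilon^{\mathsf{op}})$; here one uses that $\mathbbm 1^{\mathsf{op}}=\mathbbm 1$ with trivial structure. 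Likewise, by Definition~\ref{def:lax0oplax2-nattr}, a $\mathsf{Lax}_0\mathsf{Oplax}_+$-monoidal natural transformation between two such functors is exactly a $\mathsf{Lax}^+\mathsf{Oplax}^0$-monoidal natural transformation between the corresponding functors into $\mathsf R^{\mathsf{op}}$. Thus the category in part (ii) for $\mathsf R$ is isomorphic to the category in part (ii) of Theorem~\ref{thm:comonoid} applied to $\mathsf R^{\mathsf{op}}$.

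Combining the two identifications, Theorem~\ref{thm:comonoid} applied to the $\mathsf{Lax}^+\mathsf{Oplax}^0$-monoidal category $(\mathsf R^{\mathsf{op}},\OpLax^{\mathsf{op}},\Psi^{\mathsf{op}},\psi^{\mathsf{op}},\upsilon^{\mathsf{op}})$ yields an isomorphism between the category of comonoids therein --- i.e.\ monoids in $(\mathsf R,\OpLax,\Psi,\psi,\upsilon)$ --- and the category of $\mathsf{Lax}^+\mathsf{Oplax}^0$-monoidal functors $(\mathbbm 1,1,1,1,1)\to(\mathsf R^{\mathsf{op}},\dots)$ with their monoidal natural transformations --- i.e.\ $\mathsf{Lax}_0\mathsf{Oplax}_+$-monoidal functors into $(\mathsf R,\OpLax,\Psi,\psi,\upsilon)$ with their monoidal natural transformations. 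This is precisely the asserted isomorphism, and the final sentence about preservation of monoids follows because the isomorphism is natural in the obvious way, so a $\mathsf{Lax}_0\mathsf{Oplax}_+$-monoidal functor post-composed with the functor classifying a monoid again classifies a monoid.

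I do not anticipate a genuine obstacle: the only point requiring a little care is bookkeeping the several structure components ($\Psi,\psi,\upsilon$ and, for functors, the oplax comparison cells together with the nullary datum $\Gamma_0$) under the $(-)^{\mathsf{op}}$ operation, to be sure each defining diagram of the $\mathsf{Lax}_0\mathsf{Oplax}_+$ notions really is the arrow-reversal of the corresponding $\mathsf{Lax}^+\mathsf{Oplax}^0$ diagram --- which is exactly how Definitions~\ref{def:lax0oplax2-functor} and \ref{def:lax0oplax2-nattr} were set up. Hence the proof can be stated in one line, in the style of the proof of Theorem~\ref{thm:semigroup}: \emph{it follows by applying Theorem~\ref{thm:comonoid} to the $\mathsf{Lax}^+\mathsf{Oplax}^0$-monoidal category $(\mathsf R^{\mathsf{op}},\OpLax^{\mathsf{op}},\Psi^{\mathsf{op}},\psi^{\mathsf{op}},\upsilon^{\mathsf{op}})$.}
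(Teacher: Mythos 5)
Your proposal is correct and coincides with the paper's own argument: the proof given there is precisely the one-line application of Theorem \ref{thm:comonoid} to the $\mathsf{Lax}^+\mathsf{Oplax}^0$-monoidal category $(\mathsf R^{\mathsf{op}},\OpLax^{\mathsf{op}},\Psi^{\mathsf{op}},\psi^{\mathsf{op}},\upsilon^{\mathsf{op}})$, with the dualisation of monoids, functors and natural transformations being exactly the bookkeeping you describe.
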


\begin{proof}
Apply Theorem \ref{thm:comonoid} to the $\mathsf{Lax}^+\mathsf{Oplax}^0$-monoidal category $(\mathsf R^{\mathsf{op}},\OpLax^{\mathsf{op}},\Psi^{\mathsf{op}},\psi^{\mathsf{op}},$ $\upsilon^{\mathsf{op}})$.
\end{proof}

\begin{definition} \label{def:monoid_module}
A {\em module} of a monoid $(a,\mu,\eta)$ in a $\mathsf{Lax}_0\mathsf{Oplax}_+$-monoidal category $(\mathsf R,\OpLax,\Psi,\psi,\upsilon)$ is a comodule of the comonoid $(a,\mu,\eta)$ in the $\mathsf{Lax}^+\mathsf{Oplax}^0$-monoidal category $(\mathsf R^{\mathsf{op}},\OpLax^{\mathsf{op}},\Psi^{\mathsf{op}},\upsilon^{\mathsf{op}})$. Equivalently, it is a module $(x,\varrho:x\OpLax a \to x)$ in the sense of Definition \ref{def:module} of the semigroup $(a,\mu)$ such that also the diagram
$$
\xymatrix{
x \OpLax i \ar[r]^-{\psi_{1,0}} \ar[d]_-{1 \circ \eta} &
\lw x \rw \ar[d]^-\upsilon \\
x \OpLax a \ar[r]_-\varrho &
x}
$$
commutes.
A {\em morphism of modules} over a monoid is a morphism of modules over the underlying semigroup in the sense of Definition \ref{def:module}.
\end{definition}

\begin{theorem}
\label{thm:OpLax}
Associated to a monoidal category $(\mathsf V,\otimes,I)$, there is a strictly normal $\mathsf{Lax}_0\mathsf{Oplax}_+$-monoidal category $\mathsf R$ as follows. 
\begin{itemize}
\item As a category, $\mathsf R$ is identical to the category $\mathsf L$ of Theorem \ref{thm:Lax}.
\item For all non-negative integers $n$, we take the same functors $\raisebox{-2pt}{$
\stackrel {\displaystyle \otimes} {{}_n}$}:\mathsf R^n \to \mathsf R$ as in Theorem \ref{thm:Lax}.
\item For any sequence of non-negative integers $\{k_1,\dots,k_n\}$, and for any collection of objects $\{(a_{ij},\kappa_{ij},\nu_{ij})\}_{\stackrel{i=1,\dots,n}{{}_{j=1,\dots,k_i}}}$, the natural transformation 
$\Psi_{k_1,\dots,k_n}$ is taken to be
$$
\otimes_{i=1}^n(\otimes_{j=1}^{k_i} 
\kappa_{ij}^{\sum_{p=i+1}^n(\overline k_p-1)}
\nu_{ij}^{\sum_{p=1}^{i-1}(\overline k_p-1)}):
\otimes_{i=1}^n(\otimes_{j=1}^{k_i} a_{ij}) \to 
\otimes_{i=1}^n(\otimes_{j=1}^{k_i} a_{ij}),
$$ 
and the natural transformation 
$\psi_{k_1,\dots,k_n}$ is taken to be
$$
\otimes_{i=1}^n(\otimes_{j=1}^{k_i} 
\kappa_{ij}^{\sum_{p=i+1}^n Z(k_p)}
\nu_{ij}^{\sum_{p=1}^{i-1} Z(k_p)}):
\otimes_{i=1}^n(\otimes_{j=1}^{k_i} a_{ij}) \to 
\otimes_{i=1}^n(\otimes_{j=1}^{k_i} a_{ij}).
$$
\end{itemize}
\end{theorem}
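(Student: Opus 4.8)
The plan is to obtain Theorem \ref{thm:OpLax} from Theorem \ref{thm:Lax} by passage to the opposite category, in the same spirit in which Theorem \ref{thm:semigroup} followed from Theorem \ref{thm:cosemigroup} and Theorem \ref{thm:monoid} from Theorem \ref{thm:comonoid}. Explicitly, I would apply Theorem \ref{thm:Lax} to the monoidal category $(\mathsf V^{\mathsf{op}},\otimes,I)$ --- whose tensor is the opposite of $\otimes$ and whose unit is still $I$ --- to produce a strictly normal $\mathsf{Lax}^+\mathsf{Oplax}^0$-monoidal category $(\mathsf L',\otimes,\Phi',\phi',1)$. Then, by the duality recorded right after the definition of $\mathsf{Lax}_0\mathsf{Oplax}_+$-monoidal category (a datum $(\mathsf R,\OpLax,\Psi,\psi,\upsilon)$ is $\mathsf{Lax}_0\mathsf{Oplax}_+$-monoidal precisely when its ``op'' is $\mathsf{Lax}^+\mathsf{Oplax}^0$-monoidal), the category $\mathsf R:=(\mathsf L')^{\mathsf{op}}$, equipped with the families $\OpLax,\Psi,\psi,\upsilon$ whose opposites are respectively the monoidal-product functors of $\mathsf V^{\mathsf{op}}$ and the natural transformations $\Phi',\phi',1$ of $\mathsf L'$, is automatically a $\mathsf{Lax}_0\mathsf{Oplax}_+$-monoidal category, and it is strictly normal because $\mathsf L'$ is (so $\upsilon=1^{\mathsf{op}}=1$).

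It then remains to match this $\mathsf R$ with the data displayed in the statement. First one identifies the underlying category: the objects of $\mathsf L'$ are triples $(a,\alpha,\beta)$ with $\alpha,\beta\colon a\to a$ commuting in $\mathsf V^{\mathsf{op}}$, which is the same relation as $\alpha\cdot\beta=\beta\cdot\alpha$ in $\mathsf V$, hence the same objects as $\mathsf L$; and a morphism $(a,\alpha,\beta)\to(a',\alpha',\beta')$ of $\mathsf L'$ unwinds to a morphism $\vartheta\colon a'\to a$ of $\mathsf V$ with $\vartheta\cdot\alpha'=\alpha\cdot\vartheta$ and $\vartheta\cdot\beta'=\beta\cdot\vartheta$, i.e.\ exactly a morphism of $\mathsf L^{\mathsf{op}}$. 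Thus $\mathsf L'=\mathsf L^{\mathsf{op}}$ and $\mathsf R=(\mathsf L')^{\mathsf{op}}=\mathsf L$, the category of Theorem \ref{thm:Lax}. Next, the functor $(\mathsf L')^n\to\mathsf L'$ underlying $\mathsf L'$ is the $n$-fold tensor of $\mathsf V^{\mathsf{op}}$, so its opposite $\OpLax_n\colon\mathsf R^n\to\mathsf R$ is the $n$-fold tensor of $\mathsf V$, which is exactly the functor $\otimes_n$ of Theorem \ref{thm:Lax} (the object and morphism assignments coincide on the nose); likewise $\OpLax_0$ picks out $(I,1,1)$, matching $i=(I,1,1)$.

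Finally I would check that the components of $\Psi=\Phi'^{\,\mathsf{op}}$ and $\psi=\phi'^{\,\mathsf{op}}$ are the stated natural transformations. The observation needed is that for an endomorphism $f$ of an object the iterate $f^m$ is the same morphism whether composition is read in $\mathsf V$ or in $\mathsf V^{\mathsf{op}}$, that for the commuting pair $\kappa_{ij},\nu_{ij}$ one has $\kappa_{ij}^{a}\cdot\nu_{ij}^{b}=\kappa_{ij}^{a}\nu_{ij}^{b}$ regardless of the ambient order of composition, and that $\otimes$ acts identically on morphisms of $\mathsf V$ and of $\mathsf V^{\mathsf{op}}$. Hence, by Theorem \ref{thm:Lax} applied to $\mathsf V^{\mathsf{op}}$, the component of $\Phi'_{k_1,\dots,k_n}$ at a family $\{(a_{ij},\kappa_{ij},\nu_{ij})\}$ is, as a morphism of $\mathsf V$, precisely $\otimes_{i=1}^n(\otimes_{j=1}^{k_i}\kappa_{ij}^{\sum_{p=i+1}^n(\overline k_p-1)}\nu_{ij}^{\sum_{p=1}^{i-1}(\overline k_p-1)})$; passing to the opposite only reverses the direction of the $2$-cell, leaving this component unchanged, so $\Psi_{k_1,\dots,k_n}$ is the transformation claimed, and the same reasoning with $Z(k_p)$ in place of $\overline k_p-1$ gives $\psi_{k_1,\dots,k_n}$. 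The only genuinely non-formal point is this last bookkeeping of exponents and tensor factors; everything else is pure duality. (Alternatively, one could prove the statement directly, by repeating verbatim the exponent comparison carried out in the proof of Theorem \ref{thm:Lax}, now for the axiom diagram of Figure \ref{fig:lax+oplax0} with all arrows reversed.)
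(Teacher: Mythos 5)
Your proposal is correct and takes essentially the same route as the paper: its entire proof of Theorem \ref{thm:OpLax} is to apply Theorem \ref{thm:Lax} to the monoidal category $(\mathsf V^{\mathsf{op}},\otimes^{\mathsf{op}},I)$ and take the opposite of the resulting $\mathsf{Lax}^+\mathsf{Oplax}^0$-monoidal category, invoking the duality between $\mathsf{Lax}_0\mathsf{Oplax}_+$- and $\mathsf{Lax}^+\mathsf{Oplax}^0$-monoidal structures. Your additional bookkeeping --- identifying $\mathsf R=(\mathsf L')^{\mathsf{op}}=\mathsf L$, matching the $n$-ary functors, and checking that the components of $\Psi$ and $\psi$ are unchanged under passage to the opposite --- simply spells out what the paper leaves implicit.
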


\begin{proof}
Apply Theorem \ref{thm:Lax} to the monoidal category $(\mathsf V^{\mathsf{op}}, \otimes^{\mathsf{op}},I^{\mathsf{op}} )$; and take the opposite of the resulting $\mathsf{Lax}^+\mathsf{Oplax}^0$-monoidal category.
\end{proof}

In complete analogy with Theorem \ref{thm:comon_lax+fun} we get the following.

\begin{theorem}
\label{thm:mon_lax+fun}
For any monoidal category $(\mathsf V,\otimes,I)$ the following categories are pairwise isomorphic.
\begin{itemize}
\item[{(1.i)}] The category of semigroups in the $\mathsf{Oplax}_+$-monoidal category $\mathsf R$ of Theorem \ref{thm:OpLax}.
\item[{(1.ii)}] The category of \BiHom-monoids in $\mathsf V$.
\end{itemize}
and 
\begin{itemize}
\item[{(2.i)}] The category of monoids in the $\mathsf{Lax}_0\mathsf{Oplax}_+$-monoidal category $\mathsf R$ of Theorem \ref{thm:Lax}.
\item[{(2.ii)}] The category of unital \BiHom-monoids in $\mathsf V$.
\end{itemize}
\end{theorem}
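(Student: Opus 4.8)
\emph{Plan.} The statement should be obtained from Theorem~\ref{thm:comon_lax+fun} by the same double dualization that produced Theorem~\ref{thm:OpLax} from Theorem~\ref{thm:Lax}. Recall that, by the proof of Theorem~\ref{thm:OpLax}, the $\mathsf{Lax}_0\mathsf{Oplax}_+$-monoidal category $\mathsf R$ is by construction the opposite of the $\mathsf{Lax}^+\mathsf{Oplax}^0$-monoidal category obtained by applying Theorem~\ref{thm:Lax} to $(\mathsf V^{\mathsf{op}},\otimes^{\mathsf{op}},I^{\mathsf{op}})$; denote this latter category by $\mathsf L'$, so that $\mathsf R=\mathsf L'^{\mathsf{op}}$ as $\mathsf{Lax}_0\mathsf{Oplax}_+$-monoidal categories, and also $\mathsf R=\mathsf L'^{\mathsf{op}}$ as $\mathsf{Oplax}_+$-monoidal categories after forgetting the nullary data. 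Note that $\mathsf L'$ is, as a category, identical to the category $\mathsf L$ of Theorem~\ref{thm:Lax} built from $\mathsf V$ itself (objects $(a,\alpha,\beta)$ with $\alpha\beta=\beta\alpha$, suitable morphisms), only with arrows reversed and with the structural 2-cells $\Phi,\phi$ pointing the other way.

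\emph{Part (1).} By definition, a semigroup (respectively, a morphism of semigroups) in the $\mathsf{Oplax}_+$-monoidal category $\mathsf R$ is precisely a cosemigroup (respectively, a morphism of cosemigroups) in the $\mathsf{Lax}^+$-monoidal category $\mathsf R^{\mathsf{op}}=\mathsf L'$, so the category in (1.i) is isomorphic to the category (1.i) of Theorem~\ref{thm:comon_lax+fun} applied to $\mathsf V^{\mathsf{op}}$. By Theorem~\ref{thm:comon_lax+fun}~(1) for $\mathsf V^{\mathsf{op}}$, the latter is isomorphic to the category of \BiHom-comonoids in $\mathsf V^{\mathsf{op}}$. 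Finally, reversing all arrows identifies a \BiHom-comonoid in $\mathsf V^{\mathsf{op}}$ with a \BiHom-monoid in $\mathsf V$: the reversed-arrow version of the first diagram of \eqref{eq:BiHom} becomes that diagram itself, the condition that $\alpha,\beta$ commute is unchanged, and ``preserve the comultiplication'' becomes ``preserve the multiplication''. All these identifications are visibly functorial, and composing them yields the isomorphism (1.i)\,$\cong$\,(1.ii).

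\emph{Part (2).} Proceed identically: a monoid (respectively, morphism of monoids) in the $\mathsf{Lax}_0\mathsf{Oplax}_+$-monoidal category $\mathsf R$ is a comonoid (respectively, morphism of comonoids) in the $\mathsf{Lax}^+\mathsf{Oplax}^0$-monoidal category $\mathsf R^{\mathsf{op}}=\mathsf L'$, hence the category (2.i) is isomorphic to the category (2.i) of Theorem~\ref{thm:comon_lax+fun} for $\mathsf V^{\mathsf{op}}$, and therefore by Theorem~\ref{thm:comon_lax+fun}~(2) to the category of counital \BiHom-comonoids in $\mathsf V^{\mathsf{op}}$. Reversing arrows turns a counit into a unit $\eta\colon I\to a$ and the reversed-arrow version of the second diagram of \eqref{eq:BiHom} back into that diagram, while the condition that the counit be preserved by $\alpha,\beta$ becomes the condition that $\eta$ be preserved by $\alpha,\beta$; thus a counital \BiHom-comonoid in $\mathsf V^{\mathsf{op}}$ is exactly a unital \BiHom-monoid in $\mathsf V$, giving (2.i)\,$\cong$\,(2.ii).

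\emph{Main point of care.} The one thing to verify — and the reason this is not entirely automatic — is that the explicit coherence morphisms $\Psi,\psi,\upsilon$ of Theorem~\ref{thm:OpLax} genuinely coincide with the opposites of $\Phi,\phi,\iota$ of Theorem~\ref{thm:Lax} applied to $\mathsf V^{\mathsf{op}}$; but this is precisely what the proof of Theorem~\ref{thm:OpLax} asserts (op-of-op leaves the same underlying category and reverses the structural 2-cells), so no fresh computation is needed. As an alternative to all this bookkeeping, one could give a direct proof in the style of Theorem~\ref{thm:comon_lax+fun}: unwind the definition of a (monoid in a) $\mathsf{Lax}_0\mathsf{Oplax}_+$-monoidal category for the specific $\mathsf R$ of Theorem~\ref{thm:OpLax}, evaluate $\Psi_{2,1}=1\otimes1\otimes\nu$, $\Psi_{1,2}=\kappa\otimes1\otimes1$, $\psi_{0,1}=\nu$, $\psi_{1,0}=\kappa$ (with $\upsilon=1$ by strict normality), and read off that the associativity and unitality diagrams of a monoid become exactly the first and second diagrams of \eqref{eq:BiHom}; I expect this route to be more tedious but more transparent, and it makes the matching of morphisms with \cite{GraMakMenPan} immediate.
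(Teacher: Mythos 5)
Your duality argument is correct and is essentially the paper's own route: the paper gives no separate proof but obtains this theorem ``in complete analogy with Theorem~\ref{thm:comon_lax+fun}'', i.e.\ precisely by the dualization you spell out ($\mathsf R$ being by construction the opposite of the category of Theorem~\ref{thm:Lax} applied to $(\mathsf V^{\mathsf{op}},\otimes^{\mathsf{op}},I^{\mathsf{op}})$), and your explicit evaluations $\Psi_{2,1}=1\otimes 1\otimes\nu$, $\Psi_{1,2}=\kappa\otimes 1\otimes 1$, $\psi_{0,1}=\nu$, $\psi_{1,0}=\kappa$, $\upsilon=1$ are also right. One small bookkeeping point: semigroups (monoids) in $\mathsf R$ form the \emph{opposite} of the category of cosemigroups (comonoids) in $\mathsf R^{\mathsf{op}}=\mathsf L'$, but since you also reverse arrows when identifying (counital) \BiHom-comonoids in $\mathsf V^{\mathsf{op}}$ with (unital) \BiHom-monoids in $\mathsf V$, the two op's cancel and the composite isomorphism is exactly as claimed.
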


A module in the sense of Definition \ref{def:module} over a semigroup in the 
$\mathsf{Oplax}_+$-monoidal category $\mathsf R$ of Theorem \ref{thm:OpLax} is precisely the same as a module in the sense of \cite[Definition 4.1]{GraMakMenPan} over the corresponding \BiHom-monoid in Theorem \ref{thm:mon_lax+fun}~(1.ii).
A module in the sense of Definition \ref{def:monoid_module} over a monoid in the $\mathsf{Lax}_0\mathsf{Oplax}_+$-monoidal category $\mathsf R$ of Theorem \ref{thm:OpLax} is precisely the same as an unital module in the sense of \cite[Definition 4.1]{GraMakMenPan} over the corresponding unital \BiHom-monoid in Theorem \ref{thm:mon_lax+fun}~(2.ii).

Applying Proposition \ref{prop:Yau} to the opposite categories, we obtain the following explanation of the Yau twist of semigroups and monoids in \cite[Proposition 5.9]{GraMakMenPan}.

\begin{proposition} \label{prop:YauDual}
Consider an arbitrary monoidal category $(\mathsf V,\otimes,I)$, the associated strictly normal $\mathsf{Lax}_0\mathsf{Oplax}_+$-monoidal category $\mathsf R$
of Theorem \ref{thm:OpLax}, and the monoidal category $\mathsf M$ of Proposition \ref{prop:Yau} regarded now as a $\mathsf{Lax}_0\mathsf{Oplax}_+$-monoidal category. The identity functor $\mathsf M \to \mathsf R$ carries a $\mathsf{Lax}_0\mathsf{Oplax}_+$-monoidal structure given by the same morphisms in Proposition \ref{prop:Yau}. Consequently, it preserves semigroups and monoids.
\end{proposition}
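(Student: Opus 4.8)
The plan is to deduce this from Proposition~\ref{prop:Yau} by exactly the opposite-category device that was used to derive Theorem~\ref{thm:OpLax} from Theorem~\ref{thm:Lax}. Write $\mathsf L(\mathsf W)$ and $\mathsf M(\mathsf W)$ for the $\mathsf{Lax}^+\mathsf{Oplax}^0$-monoidal category of Theorem~\ref{thm:Lax} and the biased monoidal category of Proposition~\ref{prop:Yau} attached to a monoidal category $\mathsf W$. By the construction in the proof of Theorem~\ref{thm:OpLax}, the category $\mathsf R$ is the opposite of $\mathsf L(\mathsf V^{\mathsf{op}})$; and since the underlying category and the biased monoidal product of $\mathsf M$ are inherited verbatim from $\mathsf V$, one checks that $\mathsf M$, now regarded as a $\mathsf{Lax}_0\mathsf{Oplax}_+$-monoidal category, is the opposite of $\mathsf M(\mathsf V^{\mathsf{op}})$ regarded as a $\mathsf{Lax}^+\mathsf{Oplax}^0$-monoidal category (this uses that $\mathsf L(\mathsf V^{\mathsf{op}})=\mathsf L(\mathsf V)^{\mathsf{op}}$, which is implicit in the identification of the underlying categories in Theorem~\ref{thm:OpLax}). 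Thus it suffices to dualize the $\mathsf{Lax}^+\mathsf{Oplax}^0$-monoidal functor supplied by Proposition~\ref{prop:Yau} for $\mathsf V^{\mathsf{op}}$.

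Next I would record the (purely formal) fact that taking opposites converts a $\mathsf{Lax}^+\mathsf{Oplax}^0$-monoidal functor $(G,\Gamma)\colon\mathsf A\to\mathsf B$ into a $\mathsf{Lax}_0\mathsf{Oplax}_+$-monoidal functor $(G^{\mathsf{op}},\Gamma^{\mathsf{op}})\colon\mathsf A^{\mathsf{op}}\to\mathsf B^{\mathsf{op}}$: the underlying functor is unchanged on objects, each component $\Gamma_n\colon G\cdot\lax n\to\lax n'\cdot(G\cdots G)$ becomes $\Gamma^{\mathsf{op}}_n\colon\oplax n'\cdot(G^{\mathsf{op}}\cdots G^{\mathsf{op}})\to G^{\mathsf{op}}\cdot\oplax n$, and the coherence diagrams of Definition~\ref{def:lax2oplax0-functor} are carried precisely to those of Definition~\ref{def:lax0oplax2-functor}, which are by fiat their arrow-reversals. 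Applying this to the functor of Proposition~\ref{prop:Yau} for $\mathsf V^{\mathsf{op}}$ produces a $\mathsf{Lax}_0\mathsf{Oplax}_+$-monoidal structure on the identity functor $\mathsf M\to\mathsf R$.

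Finally I would identify the structure morphisms and conclude. Proposition~\ref{prop:Yau} for $\mathsf V^{\mathsf{op}}$ gives the components $\Gamma_n=\otimes_{i=1}^n\alpha_i^{n-i}\cdot\beta_i^{i-1}$, with composition and monoidal product formed in $\mathsf V^{\mathsf{op}}$; because $\otimes^{\mathsf{op}}$ coincides with $\otimes$ on morphisms and because in every factor $\alpha_i$ and $\beta_i$ commute, so that the order in which their powers are composed is immaterial, the very same expression read now in $\mathsf V$ describes $\Gamma^{\mathsf{op}}_n$ — which is exactly the family asserted in the statement. Preservation of semigroups and monoids is then immediate: $\mathsf{Lax}_0\mathsf{Oplax}_+$-monoidal functors preserve monoids by Theorem~\ref{thm:monoid}, and, forgetting the nullary part, preserve semigroups by Theorem~\ref{thm:semigroup}. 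The one delicate point in this argument is precisely the invariance of the formula for $\Gamma_n$ under passage to opposite categories: it rests on the commutativity $\alpha\beta=\beta\alpha$ built into the objects of $\mathsf L$ (equivalently of $\mathsf R$), for without it dualization would reverse the composition order of the powers of $\alpha_i$ and $\beta_i$ and the formula would not match on the nose; with commutativity the two agree verbatim, exactly as the formulas for $\Psi,\psi$ in Theorem~\ref{thm:OpLax} agree verbatim with those for $\Phi,\phi$ in Theorem~\ref{thm:Lax}. All remaining content is the bookkeeping of which ambient category a given object or morphism inhabits, which I would fix once at the outset.
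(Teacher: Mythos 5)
Your argument is correct and is exactly the paper's route: the paper proves this proposition simply by ``applying Proposition~\ref{prop:Yau} to the opposite categories,'' which is the duality device you spell out (including the observation, left implicit in the paper, that the commutativity of $\alpha_i$ and $\beta_i$ makes the formula for $\Gamma_n$ self-dual).
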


\begin{remark}
In the coinciding categories $\mathsf R$ of Theorem \ref{thm:OpLax} and $\mathsf L$ of Theorem \ref{thm:Lax} take the full monoidal subcategory described in Remark \ref{rem:invertible}.
The category of semigroups (resp. monoids) in it is isomorphic to the category whose objects are triples consisting of a semigroup (resp. monoid) in $\mathsf V$ together with two commuting semigroup (resp. monoid) automorphisms.
\end{remark}

\section{Bimonoids in $\mathsf{Lax}^+_0\mathsf{Oplax}^0_+$-duoidal categories}

In this section we introduce compatibility conditions between $\mathsf{Lax}^+\mathsf{Oplax}^0$-monoidal, and $\mathsf{Lax}_0\mathsf{Oplax}_+$-monoidal structures on the same category. Similarly to Section \ref{sec:bisgr}, they imply that the category of comonoids w.r.t. the $\mathsf{Lax}^+\mathsf{Oplax}^0$-monoidal structure inherits the $\mathsf{Lax}_0\mathsf{Oplax}_+$-monoidal structure; symmetrically, the category of monoids w.r.t. the $\mathsf{Lax}_0\mathsf{Oplax}_+$-monoidal structure inherits the $\mathsf{Lax}^+\mathsf{Oplax}^0$-monoidal structure. We define then bimonoids as monoids in the category of comonoids; equivalently, as comonoids in the category of monoids. Unital and counital \BiHom-bimonoids are described as bimonoids in this sense.

\begin{definition}
\label{def:+0duoidal}
A {\em $\mathsf{Lax}^+_0\mathsf{Oplax}^0_+$-duoidal category} consists of
\begin{itemize}
\item a category $\mathsf D$
\item a $\mathsf{Lax}^+\mathsf{Oplax}^0$-monoidal structure $(\mathsf D,\Lax,\Phi,\phi,\iota)$
\item a $\mathsf{Lax}_0\mathsf{Oplax}_+$-monoidal structure $(\mathsf D,\OpLax,\Psi,\psi,\upsilon)$
\item for all non-negative integers $n$ and $p$, natural transformations
$$
\xymatrix@C=60pt{
\mathsf D^{np} \ar[r]^-{\tau_{np}} 
\ar[d]_-{\oplax n \cdots \oplax n}
\ar@{}[rrd]|-{\Longdownarrow {\xi_n^p}} &
\mathsf D^{pn} \ar[r]^-{\lax p \cdots \lax p} &
\mathsf D^{n} \ar[d]^-{\oplax n} \\
\mathsf D^{p} \ar[rr]_-{\lax p} &&
\mathsf D}
$$
\end{itemize}
satisfying the following equivalent compatibility conditions, for all non-negative integers $n,p,k_1,\dots,k_p$.
\begin{itemize}
\item[{(i)}] 
\begin{itemize}
\item[$\bullet$]
$(\oplax n,\xi_n)$ is a $\mathsf{Lax}^+\mathsf{Oplax}^0$-monoidal functor $(\mathsf D,\Lax,\Phi,\phi,\iota)^n \to(\mathsf D,\Lax,\Phi,\phi,\iota);$ 
\item[$\bullet$]
$\Psi_{k_1,\dots,k_p}$, $\psi_{k_1,\dots,k_p}$ and $\upsilon$ are $\mathsf{Lax}^+\mathsf{Oplax}^0$-monoidal natural transformations. 
\end{itemize}
\noindent
Succinctly, $((\mathsf D,\Lax,\Phi,\phi,\iota),(\OpLax,\xi),\Psi,\psi,\upsilon)$ is a $\mathsf{Lax}_0\mathsf{Oplax}_+$-monoid in the strict monoidal 2-category $\mathsf{Lax}^+\mathsf{Oplax}^0$ of Proposition \ref{prop:2catLax+Oplax0}.
\item[{(ii)}]
\begin{itemize}
\item[$\bullet$]
$(\lax n,\xi^n)$ is a $\mathsf{Lax}_0\mathsf{Oplax}_+$-monoidal functor $(\mathsf D,\OpLax,\Psi,\psi,\upsilon)^n \to (\mathsf D,\OpLax,\Psi,\psi,\upsilon);$ 
\item[$\bullet$]
$\Phi_{k_1,\dots,k_p}$, $\phi_{k_1,\dots,k_p}$ and $\iota$ are $\mathsf{Lax}_0\mathsf{Oplax}_+$-monoidal natural transformations.
\end{itemize}
\noindent
Succinctly, $((\mathsf D,\OpLax,\Psi,\psi,\upsilon),(\Lax,\xi),\Phi,\phi,\iota)$ is a $\mathsf{Lax}^+\mathsf{Oplax}^0$-monoid in the strict monoidal 2-category $\mathsf{Lax}_0\mathsf{Oplax}_+$ of Section \ref{sec:Lax0Oplax+}.
\item[{(iii)}]
The diagrams
\begin{amssidewaysfigure}
\centering
\xymatrix@C=25pt@R=25pt{
\\
\\
\oplax n \cdot (\lax p \cdots \lax p) \cdot
(\raisebox{-1.5pt}{$\lax {k_1} \cdots \lax {k_p}{\cdots } \lax {k_1} \cdots \lax {k_p}$}) \cdot
\tau_{nK}
\ar@{=}[d]
\ar[r]^-{1\cdot (\Phi_{k_1,\dots,k_p}\cdots \Phi_{k_1,\dots,k_p})\cdot 1} &
\oplax n \cdot(\raisebox{-1.5pt}{$\lax {K+Z} \cdots \lax {K+Z}$} ) \cdot
([k_1]^\bullet \cdots [k_p]^\bullet \cdots [k_1]^\bullet \cdots [k_p]^\bullet) \cdot \tau_{nK} \ar@{=}[d] &
\oplax n \cdot(\raisebox{-1.5pt}{$\lax {K} \cdots \lax {K} $}) \cdot \tau_{nK} 
\ar[l]_-{\raisebox{8pt}{${}_{
1\cdot (\phi_{k_1,\dots,k_p}\cdots \phi_{k_1,\dots,k_p})\cdot 1}$}} 
\ar[ddd]^-{\xi^K_n} \\
\oplax n \cdot (\lax p \cdots \lax p) \cdot \tau_{np} \cdot
(\raisebox{-1.5pt}{$\lax {k_1} \cdots \lax {k_1} \cdots \lax {k_p} \cdots \lax {k_p}$}) \cdot (\tau_{nk_1} \cdots \tau_{nk_p}) 
\ar[d]_-{\xi^p_n\cdot 1 \cdot 1} &
\oplax n \cdot(\raisebox{-1.5pt}{$\lax {K+Z} \cdots \lax {K+Z}$}) \cdot \tau_{n(K+Z)}
\cdot ([k_1]^\bullet \cdots [k_1]^\bullet \cdots [k_p]^\bullet \cdots [k_p]^\bullet)
\ar[d]^-{\xi^{K+Z}_n \cdot 1}  \\
\lax p \cdot (\oplax n \cdots \oplax n) \cdot 
(\raisebox{-1.5pt}{$\lax {k_1} \cdots \lax {k_1} \cdots \lax {k_p} \cdots \lax {k_p}$}) \cdot (\tau_{nk_1} \cdots \tau_{nk_p}) 
\ar[d]_-{1\cdot (\xi^{k_1}_n \cdots \xi^{k_p}_n)} &
\raisebox{-1.5pt}{$\lax {K+Z}$} \cdot (\oplax n \cdots \oplax n) \cdot 
([k_1]^\bullet \cdots [k_1]^\bullet \cdots [k_p]^\bullet \cdots [k_p]^\bullet)
\ar[d]^-{1\cdot (\lsem k_1 \rsem^\bullet_{n} \cdots \lsem k_p \rsem^\bullet_{n} )} \\
\lax p \cdot (\raisebox{-1.5pt}{$\lax {k_1} \cdots \lax {k_p}$}) 
\cdot (\oplax n \cdots \oplax n) 
\ar[r]_-{\Phi_{k_1,\dots, k_p}\cdot 1} &
\raisebox{-1.5pt}{$\lax{K+Z}$} \cdot ([k_1]^\bullet \cdots [k_p]^\bullet) \cdot 
(\oplax n \cdots \oplax n) &
\ar[l]^-{\phi_{k_1,\dots, k_p}\cdot 1}
\raisebox{-1.5pt}{$\lax K$} \cdot (\oplax n \cdots \oplax n) \\
\\
\lax n \cdot (\oplax p \cdots \oplax p) \cdot
(\raisebox{-1.5pt}{$\oplax {k_1} \cdots \oplax {k_p}\cdots  \oplax {k_1} \cdots \oplax {k_p}$}) &
\lax n \cdot(\raisebox{-1.5pt}{$\oplax {K+Z} \cdots \oplax {K+Z}$} ) \cdot
([k_1]^\circ \cdots [k_p]^\circ \cdots [k_1]^\circ \cdots [k_p]^\circ) 
\ar[l]_-{1\cdot (\Psi_{k_1,\dots,k_p}\cdots \Psi_{k_1,\dots,k_p})} 
\ar[r]^-{\raisebox{8pt}{${}_{
1\cdot (\psi_{k_1,\dots,k_p}\cdots \psi_{k_1,\dots,k_p})}$}}
&
\lax n \cdot(\raisebox{-1.5pt}{$\oplax {K} \cdots \oplax {K}$} )  \\
\ar[u]^-{\xi^n_p\cdot 1}
\oplax p \cdot (\lax n \cdots \lax n) \cdot \tau_{pn} \cdot
(\raisebox{-1.5pt}{$\oplax {k_1} \cdots \oplax {k_p} \cdots \oplax {k_1} \cdots \oplax {k_p}$}) 
\ar@{=}[d] &
\ar[u]_-{\xi^n_{K+Z} \cdot 1}
\raisebox{-1.5pt}{$\oplax {K+Z}$} \cdot (\lax n \cdots \lax n) \cdot \tau_{(K+Z)n} \cdot ([k_1]^\circ \cdots [k_p]^\circ \cdots [k_1]^\circ \cdots [k_p]^\circ)
\ar@{=}[d]  \\
\oplax p \cdot (\lax n \cdots \lax n)  \cdot
(\raisebox{-1.5pt}{$\oplax {k_1} \cdots \oplax {k_1} \cdots \oplax {k_p} \cdots \oplax {k_p}$}) \cdot {\tau_{pn}} &
\raisebox{-1.5pt}{$\oplax {K+Z}$} \cdot (\lax n \cdots \lax n) \cdot 
([k_1]^\circ \cdots [k_1]^\circ \cdots [k_p]^\circ \cdots [k_p]^\circ)
\cdot \tau_{Kn} \\
\ar[u]^-{1\cdot (\xi_{k_1}^n \cdots \xi_{k_p}^n)\cdot 1}
\oplax p \cdot (\raisebox{-1.5pt}{$\oplax {k_1} \cdots \oplax {k_p}$}) 
\cdot (\lax n \cdots \lax n) \cdot  
{(\tau_{k_1n} \dots \tau_{k_pn}) \cdot \tau_{pn}}
&
\ar[u]_-{1\cdot (\lsem k_1 \rsem^\circ_{n} \cdots \lsem k_p \rsem^\circ_{n})\cdot 1} \raisebox{-1.5pt}{$\oplax{K+Z}$}\cdot ([k_1]^\circ  \cdots [k_p]^\circ) \cdot 
(\lax n \cdots \lax n) \cdot \tau_{Kn} 
\ar[l]^-{\Psi_{k_1,\dots, k_p}\cdot 1 \cdot 1}
\ar[r]_-{\psi_{k_1,\dots, k_p}\cdot 1 \cdot 1} &
\raisebox{-1.5pt}{$\oplax K$} \cdot (\lax n \cdots \lax n) \cdot \tau_{Kn}
\ar[uuu]_-{\xi_K^n}}
\caption{Axioms of $\mathsf{Lax}^+_0\mathsf{Oplax}^0_+$-duoidal category}
\label{fig:duoidal}
\end{amssidewaysfigure}
$$ 
\xymatrix@C=40pt{
\raisebox{-1.5pt}{$\oplax 1$} \cdot \lax n \ar[rd]^-{\upsilon \cdot 1} 
\ar[d]_-{\xi^n_1} \\
\lax n \cdot (\raisebox{-1.5pt}{$\oplax 1 \cdots \oplax 1$}) 
\ar[r]_-{1\cdot (\upsilon \cdots \upsilon)} &
\lax n}
\qquad \qquad
\xymatrix@C=40pt{
\oplax n \ar[r]^-{1\cdot (\iota \cdots \iota)} \ar[rd]_-{\iota\cdot 1} &
\oplax n \cdot (\raisebox{-1.5pt}{$\lax 1 \cdots \lax 1$}) \ar[d]^-{\xi^1_n} \\
& \raisebox{-1.5pt}{$\lax 1$} \cdot \oplax n}
$$
and those of Figure \ref{fig:duoidal} commute, for all non-negative integers $n,p,k_1,\dots,k_p$, for the families of functors
\begin{eqnarray*}
&&
\xymatrix@C=17pt{\mathsf D^{k_i} \ar[r]^-{[ k_i ]^\bullet} & \mathsf D^{\overline k_i}}:=
\left\{
\arraycolsep=1pt\def\arraystretch{.5}
\begin{array}{ll}
\xymatrix@C=17pt{\mathsf D^{k_i} \ar[r]^-1 & \mathsf D^{k_i}} & \textrm{if}\quad k_i>0 \\
\xymatrix@C=27pt{ \mathbbm 1 \ar[r]^-{\scalebox{.7}{$\lax 0$}} & \mathsf D}
& \textrm{if}\quad k_i=0
\end{array}
\right .
\\
&& 
\xymatrix@C=17pt{\mathsf D^{k_i} \ar[r]^-{[k_i ]^\circ}& \mathsf D^{\overline k_i}}:=
\left\{
\arraycolsep=1pt\def\arraystretch{.5}
\begin{array}{ll}
\xymatrix@C=17pt{\mathsf D^{k_i} \ar[r]^-1 & \mathsf D^{k_i}} & \textrm{if}\quad k_i>0 \\
\xymatrix@C=27pt{ \mathbbm 1 \ar[r]^-{\scalebox{.7}{$\oplax 0$}} & \mathsf D}
& \textrm{if}\quad k_i=0
\end{array}
\right .
\end{eqnarray*}
and for the families of natural transformations
\begin{eqnarray*}
&&
\xymatrix@C=17pt{ 
\oplax n^{\overline k_i} \cdot ([k_i]^\bullet)^n  
\ar[r]^-{\lsem k_i \rsem^\bullet_{n}} &
[k_i]^\bullet \cdot \oplax n^{k_i}}:=
\left\{
\begin{array}{ll}
\arraycolsep=1pt\def\arraystretch{.5}
\xymatrix@C=12pt{\hspace*{.52cm} \oplax n ^{k_i} \ar[r]^-1 & \oplax n ^{k_i}}
& \textrm{if}\quad k_i>0 \\
\xymatrix@C=12pt{
\oplax n\cdot  \raisebox{-1.5pt}{$ \lax 0^n$}
\ar[r]^-{\xi^0_n} &  \raisebox{-1.5pt}{$\lax 0$}}
& \textrm{if}\quad k_i=0
\end{array}
\right .
\\
&&
\xymatrix@C=17pt{
[k_i]^\circ \cdot \lax n^{k_i} \ar[r]^-{ \lsem k_i \rsem^\circ_{n}} & \lax n^{\overline k_i} \cdot ([k_i]^\circ)^n}:=
\left\{
\begin{array}{ll}
\arraycolsep=1pt\def\arraystretch{.5}
\xymatrix@C=12pt{ \lax n ^{k_i} \ar[r]^-1 & \lax n ^{k_i}}
& \textrm{if}\quad k_i>0 \\
\xymatrix@C=12pt{ \raisebox{-1.5pt}{$\oplax 0$} \hspace*{.3cm} \ar[r]^-{\xi_0^n} & \lax n \cdot \raisebox{-1.5pt}{$\oplax 0^n$}}
& \textrm{if}\quad k_i=0.
\end{array}
\right .
\end{eqnarray*}
\end{itemize}
\end{definition}

Forgetting about the irrelevant structure, any $\mathsf{Lax}^+_0\mathsf{Oplax}^0_+$-duoidal category can be regarded as a $\mathsf{Lax}^+\mathsf{Oplax}_+$-duoidal category.

It is immediate from Definition \ref{def:+0duoidal}~(i) and Theorem \ref{thm:comonoid} that for any $\mathsf{Lax}^+_0\mathsf{Oplax}^0_+$-duoidal category $(\mathsf D,\Lax,\OpLax)$, $(i,\xi^2_0,\xi^0_0)$ is a comonoid in $(\mathsf D,\Lax)$. Dually, it follows from Definition \ref{def:+0duoidal}~(ii) and Theorem \ref{thm:monoid} that $(j,\xi_2^0,\xi^0_0)$ is a monoid in $(\mathsf D,\OpLax)$.

\begin{definition} \label{def:Lax+0Oplax0+functor}
A {\em $\mathsf{Lax}^+_0\mathsf{Oplax}^0_+$-duoidal functor} $(\mathsf D,\Lax,\OpLax) \to (\mathsf D',\Lax',\OpLax')$ is defined as
\begin{itemize}
\item[{(i)}] a $\mathsf{Lax}_0\mathsf{Oplax}_+$-monoidal 1-cell 
$((G,\Gamma^\bullet),\Gamma^\circ):((\mathsf D,\Lax),\OpLax) \to ((\mathsf D',\Lax'),\OpLax')$
in the strict monoidal 2-category $\mathsf{Lax}^+\mathsf{Oplax}^0$ of Proposition \ref{prop:2catLax+Oplax0}; equivalently,
\item[{(ii)}] A $\mathsf{Lax}^+\mathsf{Oplax}^0$-monoidal 1-cell 
$((G,\Gamma^\circ),\Gamma^\bullet):((\mathsf D,\OpLax),\Lax) \to ((\mathsf D',\OpLax'),\Lax')$ in the strict monoidal 2-category $\mathsf{Lax}_0\mathsf{Oplax}_+$ of Section \ref{sec:Lax0Oplax+}; equivalently,
\item[{(iii)}]
\begin{itemize}
\item[{$\bullet$}] a functor $G: \mathsf D \to \mathsf D'$
\item[{$\bullet$}] a $\mathsf{Lax}^+\mathsf{Oplax}^0$-monoidal structure $\{\Gamma^\bullet_n:G\cdot \lax n \to \lax n'\cdot (G \cdots G)\}_{n \in \mathbb N}$
\item[{$\bullet$}] a $\mathsf{Lax}_0\mathsf{Oplax}_+$-monoidal structure $\{\Gamma^\circ_p: \oplax p'\cdot (G \cdots G) \to G\cdot \oplax p \}_{p\in \mathbb N}$
\end{itemize}

\noindent
rendering commutative the diagram of \eqref{eq:Lax+Oplax+functor} for all non-negative integers $n,p$.
\end{itemize}
\end{definition}

Forgetting about the nullary parts of the structures, $\mathsf{Lax}^+_0\mathsf{Oplax}^0_+$-duoidal functors can be seen as $\mathsf{Lax}^+\mathsf{Oplax}_+$-duoidal functors.

It follows immediately from Definition \ref{def:Lax+0Oplax0+functor}~{(ii)} and Proposition \ref{prop:lax+oplax0-functor}~(1) that the composite of $\mathsf{Lax}^+_0\mathsf{Oplax}^0_+$-duoidal functors is again $\mathsf{Lax}^+_0\mathsf{Oplax}^0_+$-duoidal via the composite $\mathsf{Lax}^+\mathsf{Oplax}^0$-monoidal structure in Proposition \ref{prop:lax+oplax0-functor}~(1) and the symmetrically constructed composite $\mathsf{Lax}_0\mathsf{Oplax}_+$-monoidal structure.

\begin{definition} \label{def:Lax+0Oplax0+nattr}
A {\em $\mathsf{Lax}^+_0\mathsf{Oplax}^0_+$-duoidal natural transformation} is defined as 
\begin{itemize}
\item[{(i)}] a $\mathsf{Lax}_0\mathsf{Oplax}_+$-monoidal 2-cell in the strict monoidal 2-category $\mathsf{Lax}^+\mathsf{Oplax}^0$ of Proposition \ref{prop:2catLax+Oplax0}; equivalently,
\item[{(ii)}] a $\mathsf{Lax}^+\mathsf{Oplax}^0$-monoidal 2-cell in the strict monoidal 2-category $\mathsf{Lax}_0\mathsf{Oplax}_+$ of Section \ref{sec:Lax0Oplax+}; equivalently,
\item[{(iii)}] a natural transformation which is both $\mathsf{Lax}_0\mathsf{Oplax}_+$-monoidal and $\mathsf{Lax}^+\mathsf{Oplax}^0$-monoidal. That is, it renders commutative both the diagram of Definition \ref{def:lax2oplax0-nattr} and its symmetric counterpart with reversed arrows and inverted colors.
\end{itemize}
\end{definition}

$\mathsf{Lax}^+_0\mathsf{Oplax}^0_+$-duoidal natural transformations are in particular $\mathsf{Lax}^+\mathsf{Oplax}_+$-duoidal.

\begin{theorem}
\label{thm:duoidal(co)monoid}
For any $\mathsf{Lax}^+_0\mathsf{Oplax}^0_+$-duoidal category $(\mathsf D,(\Lax,\Phi,\phi,\iota),(\OpLax,\Psi,\psi,\upsilon),\xi)$, the following assertions hold.
\begin{itemize}
\item[{(1)}] The comonoids in the $\mathsf{Lax}^+\mathsf{Oplax}^0$-monoidal category $(\mathsf D,\Lax,\Phi,\phi,\iota)$ constitute a $\mathsf{Lax}_0\mathsf{Oplax}_+$-monoidal category with the structure $(\OpLax,\Psi,\psi,\upsilon)$.
\item[{(2)}] The monoids in the $\mathsf{Lax}_0 \mathsf{Oplax}_+$-monoidal category $(\mathsf D,\OpLax,\Psi,\psi,\upsilon)$ constitute a $\mathsf{Lax}^+\mathsf{Oplax}^0$-monoidal category with the structure $(\Lax,\Phi,\phi,\iota)$.
\end{itemize}
\end{theorem}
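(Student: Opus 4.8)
The plan is to mimic the proof of Theorem \ref{thm:duoidal(co)semigroup}, upgrading it from the ``$+$'' setting to the ``${}^+_0$'' / ``${}_0^+$'' setting, by feeding the two-sided compatibility conditions of Definition \ref{def:+0duoidal} into Theorem \ref{thm:comonoid} (respectively Theorem \ref{thm:monoid}). As usual we prove only part (1); part (2) follows by passing to opposite categories, since $(\mathsf D,\OpLax,\Psi,\psi,\upsilon)$ is $\mathsf{Lax}_0\mathsf{Oplax}_+$-monoidal iff $(\mathsf D^{\mathsf{op}},\dots)$ is $\mathsf{Lax}^+\mathsf{Oplax}^0$-monoidal, and the compatibility morphisms $\xi$ dualize accordingly.

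First I would invoke Definition \ref{def:+0duoidal}~(i): each functor $\lax n : \mathsf D^n \to \mathsf D$ is a $\mathsf{Lax}^+\mathsf{Oplax}^0$-monoidal functor (with structure $\xi_n$), so by Theorem \ref{thm:comonoid} it sends comonoids to comonoids; concretely, any sequence $\{(a_1,\delta_1,\varepsilon_1),\dots,(a_n,\delta_n,\varepsilon_n)\}$ of comonoids in $(\mathsf D,\Lax,\Phi,\phi,\iota)$ is carried to a comonoid structure on $a_1 \OpLax \cdots \OpLax a_n$, and any sequence of comonoid morphisms is carried to a comonoid morphism. Thus $\oplax n$ descends to a functor from the $n$-th Cartesian power of the category of comonoids to that category itself. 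Next, again by Definition \ref{def:+0duoidal}~(i), the natural transformations $\Psi_{k_1,\dots,k_p}$, $\psi_{k_1,\dots,k_p}$ and $\upsilon$ are $\mathsf{Lax}^+\mathsf{Oplax}^0$-monoidal natural transformations; evaluated at comonoids they therefore yield comonoid morphisms (using the comonoid description of Theorem \ref{thm:comonoid}), so they lift to natural transformations between the induced functors on Cartesian powers of the comonoid category. These lifted data $(\OpLax,\Psi,\psi,\upsilon)$ satisfy the coassociativity, counitality, and degeneracy axioms of a $\mathsf{Lax}_0\mathsf{Oplax}_+$-monoidal category automatically, because those are equalities of natural transformations that already hold in $\mathsf D$ and the forgetful functor from comonoids to $\mathsf D$ is faithful (indeed injective on objects and morphisms in the relevant sense), hence reflects such identities.

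The step I expect to be the main obstacle is the bookkeeping hidden in ``$\Psi$, $\psi$, $\upsilon$ evaluated at comonoids are comonoid morphisms.'' This is exactly what the equivalent formulation (i)$\Leftrightarrow$(iii) of Definition \ref{def:+0duoidal} is designed to encode — the large diagrams of Figure \ref{fig:duoidal} together with the two small unit/counit compatibility squares — so strictly speaking the content is already packaged there; what remains is to check that being a $\mathsf{Lax}^+\mathsf{Oplax}^0$-monoidal natural transformation $\omega : (\lax p \cdot(-),\dots) \Rightarrow (\lax K \cdot(-),\dots)$ translates, after composing with a comonoid (i.e.\ a $\mathsf{Lax}^+\mathsf{Oplax}^0$-monoidal functor $\mathbbm 1 \to \mathsf D$) and applying Theorem \ref{thm:comonoid}, into precisely the coassociativity and counitality conditions for a comonoid morphism in the sense of Definition \ref{def:comonoid}. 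This is the same kind of routine-but-delicate unwinding that appears in the proof of Theorem \ref{thm:duoidal(co)semigroup}, only now with the extra nullary pieces $\phi$, $\psi$, $\iota$, $\upsilon$ in play; I would dispatch it by the remark following Definition \ref{def:lax+oplax0} that a $\mathsf{Lax}^+\mathsf{Oplax}^0$-structure restricted to $Z=0$ data is just a $\mathsf{Lax}^+$-structure, so the coassociativity half is literally Theorem \ref{thm:duoidal(co)semigroup}, and only the counit compatibility (the small squares in Definition \ref{def:+0duoidal}~(iii) and the $Z>0$ instances governing $\phi$, $\psi$) needs a fresh, but short, check.

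Finally, having produced the $\mathsf{Lax}_0\mathsf{Oplax}_+$-monoidal structure on the comonoid category, I would note that no further verification is needed: all axioms have been reduced to identities in $\mathsf D$ via the faithful forgetful functor, exactly as in the last paragraph of the proof of Theorem \ref{thm:duoidal(co)semigroup}. Part (2) is then obtained by applying part (1) to the $\mathsf{Lax}^+_0\mathsf{Oplax}^0_+$-duoidal category $(\mathsf D^{\mathsf{op}},(\OpLax^{\mathsf{op}},\Psi^{\mathsf{op}},\psi^{\mathsf{op}},\upsilon^{\mathsf{op}}),(\Lax^{\mathsf{op}},\Phi^{\mathsf{op}},\phi^{\mathsf{op}},\iota^{\mathsf{op}}),\xi^{\mathsf{op}})$ — whose comonoids in the first structure are precisely the monoids in $(\mathsf D,\OpLax,\Psi,\psi,\upsilon)$ — and reading off the resulting $\mathsf{Lax}^+\mathsf{Oplax}^0$-monoidal structure $(\Lax,\Phi,\phi,\iota)$ on that category.
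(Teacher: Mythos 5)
Your proposal is correct and follows essentially the same route as the paper: part (1) is derived exactly as Theorem \ref{thm:duoidal(co)semigroup} was, namely from Theorem \ref{thm:comonoid} together with the compatibility clauses of Definition \ref{def:+0duoidal} (the $\OpLax$-product functors, with the structure $\xi_n$, are $\mathsf{Lax}^+\mathsf{Oplax}^0$-monoidal and hence preserve comonoids, while $\Psi$, $\psi$, $\upsilon$ are $\mathsf{Lax}^+\mathsf{Oplax}^0$-monoidal natural transformations and hence restrict to comonoid morphisms, the axioms holding by construction), with part (2) obtained symmetrically. The only blemish is the slip at the start of your second paragraph, where the functor carrying $\xi_n$ and acting on comonoids should be the $n$-fold $\OpLax$-product, not the $\Lax$-product, as your own subsequent formulas make clear.
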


\begin{proof}
In exactly the same way as we derived Theorem \ref{thm:duoidal(co)semigroup} from Theorem \ref{thm:cosemigroup} and Definition \ref{def:++duoidal}~(ii),
part (1) follows from Theorem \ref{thm:comonoid} and Definition \ref{def:+0duoidal}~(ii). Part (2) follows symmetrically.
\end{proof}

\begin{definition}
\label{def:+0bimonoid}
A {\em bimonoid} in a $\mathsf{Lax}^+_0\mathsf{Oplax}^0_+$-duoidal category $(\mathsf D,\Lax,\OpLax,\xi)$ is defined by the following equivalent data.
\begin{itemize}
\item[{(i)}] A monoid in the $\mathsf{Lax}_0\mathsf{Oplax}_+$-monoidal category of comonoids in $(\mathsf D,\Lax)$ --- see Theorem \ref{thm:duoidal(co)monoid}~(1).
\item[{(ii)}] A comonoid in the $\mathsf{Lax}^+\mathsf{Oplax}^0$-monoidal category of monoids in $(\mathsf D,\OpLax)$ --- see Theorem \ref{thm:duoidal(co)monoid}~(2).
\item[{(iii)}]
\begin{itemize}
\item[$\bullet$] An object $a$ of $\mathsf D$,
\item[$\bullet$] a monoid $(a,\mu,\eta)$ in $(\mathsf D,\OpLax)$
\item[$\bullet$] a comonoid $(a,\delta,\varepsilon)$ in $(\mathsf D,\Lax)$
\end{itemize}
\noindent
such that $(a,\mu,\delta)$ is a bisemigroup in the $\mathsf{Lax}^+\mathsf{Oplax}_+$-duoidal category $(\mathsf D,\Lax,\OpLax,$ ${\xi})$
and the following 
diagrams commute as well.
$$
\xymatrix{
a\OpLax a\ar[r]^-\mu 
\ar[d]_-{\varepsilon \circ \varepsilon} &
a \ar[d]^-\varepsilon \\
j\OpLax j \ar[r]_-{\xi^0_2} &
j} 
\qquad
\xymatrix{
i \ar[r]^-{\xi^2_0} \ar[d]_-\eta &
i\Lax i \ar[d]^-{\eta \bullet \eta} \\
a\ar[r]_-\delta &
a\Lax a}
\qquad 
\xymatrix{
i \ar[r]^-\eta \ar[rd]_-{\xi^0_0} &
a\ar[d]^-\varepsilon \\
& j}
$$
\end{itemize}
A {\em morphism of bimonoids} is a monoid morphism in the category of comonoids in $(\mathsf D,\Lax)$; equivalently, a comonoid morphism in the category of monoids in $(\mathsf D,\OpLax)$; equivalently, a morphism in $\mathsf D$ which is both a comonoid morphism in $(\mathsf D,\Lax)$ and a monoid morphism in $(\mathsf D,\OpLax)$.
\end{definition}

\begin{theorem}
For any $\mathsf{Lax}^+_0\mathsf{Oplax}^0_+$-duoidal category $(\mathsf D,\Lax,\OpLax,\xi)$ the following categories are isomorphic.
\begin{itemize}
\item[{(i)}] The category of bimonoids and their morphisms in $(\mathsf D,\Lax,\OpLax,\xi)$.
\item[{(ii)}] The category of $\mathsf{Lax}^+_0\mathsf{Oplax}^0_+$-duoidal functors $(\mathbbm 1,1,1,1)\to (\mathsf D,\Lax,\OpLax,\xi)$ and their $\mathsf{Lax}^+_0\mathsf{Oplax}^0_+$-duoidal natural transformations.
\end{itemize}
Consequently, $\mathsf{Lax}^+_0\mathsf{Oplax}^0_+$-duoidal functors preserve bimonoids.
\end{theorem}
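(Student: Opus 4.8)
The plan is to repeat, one categorical level up, the four-step argument that established Theorem~\ref{thm:cosemigroup_cat}: one replaces ``bisemigroup'' by ``bimonoid'', ``semigroup'' by ``monoid'', ``cosemigroup'' by ``comonoid'', and invokes the ``$0$-decorated'' counterparts of the results cited there. No new construction is needed; the statement will follow formally from Theorem~\ref{thm:comonoid}, Theorem~\ref{thm:monoid}, Theorem~\ref{thm:duoidal(co)monoid} and Definitions~\ref{def:Lax+0Oplax0+functor} and~\ref{def:Lax+0Oplax0+nattr}.

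Concretely, I would chain the following isomorphisms of categories. By Definition~\ref{def:+0bimonoid}(i), the category in part~(i) is isomorphic to the category of monoids and monoid morphisms in the $\mathsf{Lax}_0\mathsf{Oplax}_+$-monoidal category of comonoids in $(\mathsf D,\Lax)$, which exists by Theorem~\ref{thm:duoidal(co)monoid}(1). Applying Theorem~\ref{thm:monoid} to that $\mathsf{Lax}_0\mathsf{Oplax}_+$-monoidal category identifies it with the category of $\mathsf{Lax}_0\mathsf{Oplax}_+$-monoidal functors $(\mathbbm 1,1,1,1,1)\to\mathsf C$ and their $\mathsf{Lax}_0\mathsf{Oplax}_+$-monoidal natural transformations, where $\mathsf C$ denotes the category of comonoids in $(\mathsf D,\Lax)$. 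Next, Theorem~\ref{thm:comonoid} turns this into the category of $\mathsf{Lax}_0\mathsf{Oplax}_+$-monoidal $1$-cells and $2$-cells from $(\mathbbm 1,1)$ to $(\mathsf D,\Lax)$ in the strict monoidal $2$-category $\mathsf{Lax}^+\mathsf{Oplax}^0$ of Proposition~\ref{prop:2catLax+Oplax0}. Finally, by Definition~\ref{def:Lax+0Oplax0+functor}(i) and Definition~\ref{def:Lax+0Oplax0+nattr}(i), this last category is precisely the one in part~(ii). The composite of the four isomorphisms is the asserted isomorphism; and ``$\mathsf{Lax}^+_0\mathsf{Oplax}^0_+$-duoidal functors preserve bimonoids'' then follows, since such a functor, being in particular a $1$-cell, composes with a $\mathsf{Lax}^+_0\mathsf{Oplax}^0_+$-duoidal functor out of $\mathbbm 1$ to give another one, hence induces a functor between the categories of part~(ii).

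The one step that needs care is the third isomorphism, exactly as the invocation of Theorem~\ref{thm:cosemigroup} in the proof of Theorem~\ref{thm:cosemigroup_cat} did: one must check that the $\mathsf{Lax}_0\mathsf{Oplax}_+$-monoidal structure on $\mathsf C$ furnished by Theorem~\ref{thm:duoidal(co)monoid}(1) is literally the image, under the isomorphism of Theorem~\ref{thm:comonoid}, of the $\mathsf{Lax}_0\mathsf{Oplax}_+$-monoid structure on $(\mathsf D,\Lax)$ in $\mathsf{Lax}^+\mathsf{Oplax}^0$ recorded in Definition~\ref{def:+0duoidal}(i) --- which is precisely how Theorem~\ref{thm:duoidal(co)monoid}(1) was obtained from Theorem~\ref{thm:comonoid} --- and that, under this identification, a $\mathsf{Lax}_0\mathsf{Oplax}_+$-monoidal functor out of $\mathbbm 1$ matches a $1$-cell out of $(\mathbbm 1,1)$ term by term, and similarly for the $2$-cells. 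Since the isomorphism of Theorem~\ref{thm:comonoid} is manifestly compatible with composites, Cartesian products and Godement products, it transports the entire strict monoidal $2$-categorical structure, so this is pure bookkeeping; I therefore expect no genuine obstacle, the substance of the theorem having already been absorbed into Theorems~\ref{thm:comonoid}, \ref{thm:monoid} and~\ref{thm:duoidal(co)monoid}.
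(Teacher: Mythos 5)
Your chain of isomorphisms is exactly the one in the paper's own proof: Definition \ref{def:+0bimonoid} to pass to monoids in the category of comonoids, Theorem \ref{thm:monoid}, then Theorem \ref{thm:comonoid} to land on $\mathsf{Lax}_0\mathsf{Oplax}_+$-monoidal 1-cells and 2-cells from $(\mathbbm 1,1)$ to $(\mathsf D,\Lax)$ in $\mathsf{Lax}^+\mathsf{Oplax}^0$, and finally Definitions \ref{def:Lax+0Oplax0+functor} and \ref{def:Lax+0Oplax0+nattr}. So the proposal is correct and takes essentially the same approach; your extra remarks on the compatibility of the structure from Theorem \ref{thm:duoidal(co)monoid}(1) with Definition \ref{def:+0duoidal}(i) only make explicit bookkeeping the paper leaves implicit.
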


\begin{proof}
By Definition \ref{def:+0bimonoid}, the category of part (i) is isomorphic to the category of monoids in the category of comonoids in $(\mathsf D,\Lax)$.
So by Theorem \ref{thm:monoid}, it is isomorphic to the category of $\mathsf{Lax}_0\mathsf{Oplax}_+$-monoidal functors and $\mathsf{Lax}_0\mathsf{Oplax}_+$-monoidal natural transformations from $\mathbbm 1$ to the category of comonoids in $(\mathsf D,\Lax)$.
Then by Theorem \ref{thm:comonoid}, it is further equivalent to the category of $\mathsf{Lax}_0\mathsf{Oplax}_+$-monoidal 1-cells and $\mathsf{Lax}_0\mathsf{Oplax}_+$-monoidal 2-cells from $(\mathbbm 1,1)$ to $(\mathsf D,\Lax)$ in $\mathsf{Lax}^+\mathsf{Oplax}^0$.
By Definition \ref{def:Lax+0Oplax0+functor} and Definition \ref{def:Lax+0Oplax0+nattr} this is isomorphic to the category of part (ii).
\end{proof}

Analogously to Theorem \ref{thm:mod_monoidal}, we have the following.
\begin{theorem} \label{thm:monoid_mod_monoidal}
For any bimonoid $(a,(\delta,\varepsilon),(\mu,\eta))$ in a $\mathsf{Lax}^+_0\mathsf{Oplax}^0_+$-duoidal category 
$(\mathsf D,(\Lax,\Phi,\phi,\iota),(\OpLax,\Psi,\psi,\upsilon),\xi)$
the following assertions hold.
\begin{itemize}
\item[{(1)}] The category of modules --- in the sense of Definition \ref{def:monoid_module} --- over the underlying monoid $(a,\mu,\eta)$ in the $\mathsf{Lax}_0\mathsf{Oplax}_+$-monoidal category $(\mathsf D,\OpLax,\Psi,\psi,\upsilon)$ admits the $\mathsf{Lax}^+\mathsf{Oplax}^0$-monoidal structure $(\Lax,\Phi,\phi,\iota)$.
\item[{(2)}] The category of comodules --- in the sense of Definition \ref{def:comonoid_comodule} --- over the underlying comonoid $(a,\delta,\varepsilon)$ in the $\mathsf{Lax}^+\mathsf{Oplax}^0$-monoidal category $(\mathsf D,\Lax,\Phi,\phi,\iota)$ admits the $\mathsf{Lax}_0\mathsf{Oplax}_+$-monoidal structure $(\OpLax,\Psi,\psi,\upsilon)$.
\end{itemize}
\end{theorem}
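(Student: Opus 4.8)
The plan is to derive part~(1) from Theorem~\ref{thm:mod_monoidal}~(1) by adjoining the $0$-fold layer of structure --- in the same spirit in which Theorem~\ref{thm:duoidal(co)monoid} was obtained from Theorem~\ref{thm:duoidal(co)semigroup} --- and to deduce part~(2) from part~(1) by passing to the opposite $\mathsf{Lax}^+_0\mathsf{Oplax}^0_+$-duoidal category $(\mathsf D^{\mathsf{op}},\OpLax^{\mathsf{op}},\Lax^{\mathsf{op}})$ together with the induced bimonoid. Forgetting $0$-fold monoidal products turns $\mathsf D$ into a $\mathsf{Lax}^+\mathsf{Oplax}_+$-duoidal category and the bimonoid into the bisemigroup $(a,\delta,\mu)$, so Theorem~\ref{thm:mod_monoidal}~(1) already endows the category of modules over the \emph{semigroup} $(a,\mu)$ with the $\mathsf{Lax}^+$-monoidal structure $(\Lax,\Phi,\iota)$; in particular it provides that $\Lax$-products of module morphisms are module morphisms and that each $\Phi_{k_1,\dots,k_p}$ and $\iota$, evaluated at modules, is a module morphism. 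Since by Definition~\ref{def:monoid_module} the category of modules over the \emph{monoid} $(a,\mu,\eta)$ is the full subcategory of $(a,\mu)$-modules satisfying the additional unit axiom, and since the defining identities of Definition~\ref{def:lax+oplax0} are equalities of natural transformations which are reflected by the faithful forgetful functor to $\mathsf D$ (where they hold because $\mathsf D$ is $\mathsf{Lax}^+_0\mathsf{Oplax}^0_+$-duoidal), it suffices to verify three things: (a) the monoidal unit $j$ carries an $(a,\mu,\eta)$-module structure; (b) $\Lax$-products of unital modules are unital; (c) $\phi_{k_1,\dots,k_p}$, evaluated at (unital) modules, is a morphism of modules.

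For (a): one uses that $(j,\xi^0_2,\xi^0_0)$ is a monoid in $(\mathsf D,\OpLax)$ --- by Definition~\ref{def:+0duoidal}~(ii) and Theorem~\ref{thm:monoid} --- and that $\varepsilon$ is a morphism of monoids $(a,\mu,\eta)\to(j,\xi^0_2,\xi^0_0)$, which is exactly the content of the first and third squares of Definition~\ref{def:+0bimonoid}~(iii). Consequently $j$, equipped with the action $\xi^0_2\cdot(1\circ\varepsilon)\colon j\OpLax a\to j$, is an $(a,\mu,\eta)$-module: its associativity over $(a,\mu)$ follows from $\varepsilon$ being a semigroup morphism, and its unitality from $\varepsilon\cdot\eta=\xi^0_0$ together with the right unit axiom of the monoid $(j,\xi^0_2,\xi^0_0)$.

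For (b): let $(x_i,\varrho_i)$, $i=1,\dots,n$, be unital $(a,\mu,\eta)$-modules and let $\nu=(\varrho_1\bullet\cdots\bullet\varrho_n)\cdot\xi^n_2\cdot(1\circ\delta_n)$ be the $(a,\mu)$-module structure on $x_1\Lax\cdots\Lax x_n$ constructed in the proof of Theorem~\ref{thm:mod_monoidal}~(1), whose associativity is already established. Precomposing $\nu$ with $1\circ\eta$ and using naturality reduces the unit axiom to computing $\delta_n\cdot\eta\colon i\to a^{\bullet n}$. The second and third squares of Definition~\ref{def:+0bimonoid}~(iii) assert that $\eta$ is a morphism of comonoids $(i,\xi^0_2,\xi^0_0)\to(a,\delta,\varepsilon)$ in $(\mathsf D,\Lax)$, so by Theorem~\ref{thm:comonoid} the morphism $\delta_n\cdot\eta$ factors as $\eta^{\bullet n}$ after the $n$-fold comultiplication of the comonoid $(i,\xi^0_2,\xi^0_0)$, and an induction on $n$ along the recursion~\eqref{eq:delta_k+1}, using the unit diagrams of Definition~\ref{def:+0duoidal}~(iii) and Figure~\ref{fig:duoidal}, identifies that $n$-fold comultiplication with $\xi^n_0\colon i\to i^{\bullet n}$ --- in complete parallel with the induction on $l$ in the proof of Theorem~\ref{thm:mod_monoidal}. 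Substituting this, sliding $\eta^{\bullet n}$ through $\xi^n_2$ by naturality, and using the unitality of each $\varrho_i$, the composite $\nu\cdot(1\circ\eta)$ becomes a pure coherence expression in $\xi,\psi,\upsilon$ which equals $\upsilon\cdot\psi_{1,0}$ by the $\mathsf{Lax}_0\mathsf{Oplax}_+$-monoidal functor axioms for $(\lax n,\xi^n)$ of Definition~\ref{def:+0duoidal}~(ii). Hence $(x_1\Lax\cdots\Lax x_n,\nu)$ is a unital module.

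For (c): $\phi_{k_1,\dots,k_p}$ is nontrivial only when some $k_i=0$, i.e.\ when the corresponding argument is the unit module $j$ of~(a), and that its components are morphisms of modules is checked by the same diagram chase which proves the analogous statement for $\Phi$ in the proof of Theorem~\ref{thm:mod_monoidal}, now using the bottom diagrams of Figure~\ref{fig:duoidal} and the $n$-fold counitality~\eqref{eq:+0coass}. With (a)--(c) in hand, $(\Lax,\Phi,\phi,\iota)$ restricts to the category of $(a,\mu,\eta)$-modules, the $\mathsf{Lax}^+\mathsf{Oplax}^0$-axioms transport from $\mathsf D$ along the faithful forgetful functor, and part~(1) follows; part~(2) is then its opposite. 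The main obstacle is step~(b): exactly as in Theorem~\ref{thm:mod_monoidal}, the one genuine computation --- the inductive identification of the iterated comultiplication of $i$ with $\xi^n_0$, and the ensuing bookkeeping of the coherence cells $\xi$ against the unit morphisms --- is concentrated there, while everything else is inherited from Theorem~\ref{thm:mod_monoidal} or is purely formal.
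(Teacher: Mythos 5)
Your proposal is correct and follows essentially the same route as the paper: associativity and the positive-arity coherence cells are inherited from Theorem \ref{thm:mod_monoidal}, unitality of the $\Lax$-products is obtained by identifying the iterated comultiplication of the comonoid $(i,\xi^2_0,\xi^0_0)$ with $\xi^n_0$, using that $\eta$ is a comonoid morphism, naturality of $\xi^n_2$, unitality of the $\varrho_i$ and the duoidal compatibility diagrams of Definition \ref{def:+0duoidal}, and part (2) follows by symmetry/duality, exactly as in the paper. Two cosmetic remarks: the comultiplication of the comonoid on $i$ is $\xi^2_0$ (you wrote $\xi^0_2$, which is the multiplication of the monoid $j$), and in step (c) one should also include the components $\Phi_{k_1,\dots,k_p}$ with some $k_i=0$ alongside $\phi$ --- a check the paper likewise leaves to the reader.
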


\begin{proof}
We only prove part (1), part (2) is verified symmetrically.

For any sequence of $(a,\mu,\eta)$-modules $\{(x_i,\varrho_i)\}_{i=1,\dots,n}$, 
the object $x_1 \Lax \cdots \Lax x_n$ with the action
$$
\xymatrix@C=20pt{
(x_1 \Lax \cdots \Lax x_n) \OpLax a
\ar[r]^-{1\circ \delta_n} &
(x_1 \Lax \cdots \Lax x_n) \OpLax a^{\bullet n}
\ar[r]^-{\xi^n_2} &
(x_1 \OpLax a) \Lax \cdots \Lax (x_n \OpLax a)
\ar[rr]^-{\varrho_1 \bullet \cdots \bullet \varrho_n} &&
x_1 \Lax \cdots \Lax x_n}
$$
 is an $(a,\mu)$-module --- in the sense of Definition \ref{def:module} --- by Theorem \ref{thm:mod_monoidal}. 

Let us check its unitality whenever all of the actions $\varrho_i$ are unital.
The diagram on the right of part (iii) of Definition \ref{def:+0duoidal} for $n=0$ says the equality of $\iota: i \to \lb i \rb$ and $\xi^1_0$.
By Definition \ref{def:+0duoidal}~(i), $(\oplax 0,\xi_0):\mathbbm 1 \to \mathsf D$ is a $\mathsf{Lax}^+\mathsf{Oplax}^0$-monoidal functor. Equivalently, by Theorem \ref{thm:comonoid}, $(i,\xi^2_0,\xi^0_0)$ is a comonoid in $(\mathsf D,\Lax)$.
With these facts at hand,
from the commutativity of the left half of the upper diagram of Figure \ref{fig:duoidal} at $n=0$, $p=2$, $k_1=1$ and $k_2=l$, the $l$-fold comultiplication $i\to i^{\bullet l}$ as in \eqref{eq:delta_k+1} comes out as $\xi^l_0$. 
By Definition \ref{def:+0bimonoid}, $\eta$ is a comonoid morphism. Therefore the leftmost region of 
$$
\scalebox{.95}{$
\xymatrix@C=8pt{
(x_1\Lax \cdots \Lax x_n)\OpLax i
\ar[rrrr]^-{\psi_{1,0}}
\ar[rd]^-{1\circ \xi^n_0} 
\ar[dd]_-{1\circ \eta} &&&&
\lw x_1\Lax \cdots \Lax x_n \rw
\ar[dd]^-\upsilon \ar[ld]_-{\xi^n_1} \\
& (x_1\Lax \cdots \Lax x_n) \OpLax i^{\bullet n} 
\ar[r]^-{\raisebox{8pt}{${}_{\xi^n_2}$}}
\ar[d]^-{1\circ \eta^{\bullet n}} &
(x_1 \OpLax i) \Lax \cdots \Lax (x_n \OpLax i)
\ar[r]^-{\raisebox{8pt}{${}_{\psi_{1,0} \bullet \cdots \bullet \psi_{1,0}}$}}
\ar[d]^-{(1\circ \eta)\Lax \cdots \Lax (1\circ \eta)} &
\lw x_1 \rw \Lax \cdots \Lax \lw x_n \rw 
\ar[rd]_-{\upsilon \Lax \cdots \Lax \upsilon} \\
(x_1\Lax \cdots \Lax x_n) \OpLax a
\ar[r]_-{1\circ \delta_n} &
(x_1\Lax \cdots \Lax x_n) \OpLax a^{\bullet n} 
\ar[r]_-{\xi^n_2} &
(x_1 \OpLax a) \Lax \cdots \Lax (x_n \OpLax a)
\ar[rr]_-{\varrho_1 \Lax \cdots \Lax \varrho_n} &&
x_1\Lax \cdots \Lax x_n.}$}
$$
commutes.
The triangular region on the right commutes by the diagram on the left in part (iii) of Definition \ref{def:+0duoidal}; and the large pentagonal region at the top commutes by the right half of the lower diagram of Figure \ref{fig:duoidal} for the values $n$, $p=2$, $k_1=1$ and $k_2=0$. The quadratic region at the bottom right commutes by the unitality of each action $\varrho_i$, and the region on its left commutes by the naturality of $\xi^n_2$.
This proves the unitality of the action in the bottom row.

The $\Lax$-monoidal products of module morphisms, as well as the natural transformations $\iota$ and $\Phi_{k_1,\dots, k_p}$ for all $k_i>0$ --- if evaluated at $(a,\mu,\eta)$-modules --- are morphisms of $(a,\mu)$-modules by Theorem \ref{thm:mod_monoidal}. Then they are morphisms of $(a,\mu,\eta)$-modules by definition. 
We leave it to the reader to check --- similarly to the proof of Theorem \ref{thm:mod_monoidal} --- that the remaining natural transformations $\phi$ and $\Phi_{k_1,\dots, k_p}$ with possibly zero values of $k_i$ --- if evaluated at $(a,\mu,\eta)$-modules --- are morphisms of $(a,\mu,\eta)$-modules too. 
\end{proof}

\begin{proposition} \label{prop:a_monoid}
For any bimonoid $(a,(\delta,\varepsilon),(\mu,\eta))$ in an arbitrary $\mathsf{Lax}^+_0\mathsf{Oplax}_+^0$-duoidal category the following assertions hold.
\begin{itemize}
\item[{(1)}] $((a,\delta),\mu,\eta)$ is a monoid in the $\mathsf{Lax}_0\mathsf{Oplax}_+$-monoidal category of comodules over the comonoid $(a,\delta,\varepsilon)$ (cf. Theorem \ref{thm:monoid_mod_monoidal}~(2)).
\item[{(2)}] $((a,\mu),\delta,\varepsilon)$ is a comonoid in the $\mathsf{Lax}^+\mathsf{Oplax}^0$-monoidal category of modules over the monoid $(a,\mu,\eta)$ (cf. Theorem \ref{thm:monoid_mod_monoidal}~(1)).
\end{itemize}
\end{proposition}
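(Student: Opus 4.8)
The plan is to mimic the short proof of Proposition~\ref{prop:Hopfmod}, upgrading each ingredient from the (co)semigroup level to the (co)monoid level. I prove part~(1); part~(2) follows by the symmetric argument --- apply the same reasoning in $\mathsf D^{\mathsf{op}}$ with the roles of $\Lax$ and $\OpLax$ interchanged, invoking Proposition~\ref{prop:Hopfmod}~(2) and the first, rather than the second, diagram of Definition~\ref{def:+0bimonoid}~(iii).

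Write $\mathsf C$ for the $\mathsf{Lax}_0\mathsf{Oplax}_+$-monoidal category of comodules over the comonoid $(a,\delta,\varepsilon)$ provided by Theorem~\ref{thm:monoid_mod_monoidal}~(2), whose structure $(\OpLax,\Psi,\psi,\upsilon)$ is that of $\mathsf D$; thus the forgetful functor $U\colon\mathsf C\to\mathsf D$ is a faithful strict $\mathsf{Lax}_0\mathsf{Oplax}_+$-monoidal functor, and for $n\geq 1$ the functor $\oplax n$ of $\mathsf C$ is the restriction to $\mathsf C$ of the corresponding one on comodules over the underlying cosemigroup $(a,\delta)$. By Example~\ref{ex:comonoid_reg_comod} the regular comodule $(a,\delta)$ is an object of $\mathsf C$, and, since a morphism of comodules over a comonoid is exactly a morphism of comodules over the underlying cosemigroup (Definition~\ref{def:comonoid_comodule}), Proposition~\ref{prop:Hopfmod}~(1) tells us that $\mu$ is an associative morphism $(a,\delta)\OpLax(a,\delta)\to(a,\delta)$ in $\mathsf C$; that is, $((a,\delta),\mu)$ is a semigroup in $\mathsf C$ regarded as $\mathsf{Oplax}_+$-monoidal. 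By the definition of a monoid in a $\mathsf{Lax}_0\mathsf{Oplax}_+$-monoidal category it remains only to exhibit $\eta$ as a unit for this semigroup in $\mathsf C$.

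For this I would first read off the comodule structure carried by the monoidal unit $i$ of $\mathsf C$ from the construction in the proof of Theorem~\ref{thm:monoid_mod_monoidal}~(2): its $n=0$ instance yields the coaction $i\xrightarrow{\xi^2_0}i\Lax i\xrightarrow{1\bullet\eta}i\Lax a$ (dually to the action $j\OpLax a\xrightarrow{1\circ\varepsilon}j\OpLax j\xrightarrow{\xi^0_2}j$ on the $\lax 0$-module appearing in the proof of Theorem~\ref{thm:monoid_mod_monoidal}~(1)). With this in hand, the requirement that $\eta\colon i\to(a,\delta)$ be a morphism in $\mathsf C$ --- a comodule morphism in the sense of Definition~\ref{def:comodule} --- unwinds, after rewriting $(\eta\bullet 1)\cdot(1\bullet\eta)$ as $\eta\bullet\eta$ by functoriality of $\lax 2$, into the identity $\delta\cdot\eta=(\eta\bullet\eta)\cdot\xi^2_0$, which is precisely the second diagram of Definition~\ref{def:+0bimonoid}~(iii). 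Finally, the single unitality axiom of the monoid $((a,\delta),\mu,\eta)$ in $\mathsf C$ is an equation between morphisms of $\mathsf C$ that the strict monoidal functor $U$ carries verbatim to the unitality axiom of the monoid $(a,\mu,\eta)$ in $(\mathsf D,\OpLax)$; as $U$ is faithful and the latter holds by hypothesis, so does the former, completing~(1). The only point here that is not routine transport along $U$ --- and hence the expected main obstacle --- is making the coaction on $i$ explicit and recognising the ``$\eta$ is a comodule morphism'' condition as one of the defining diagrams of a bimonoid; once that is settled, the associativity, unitality and naturality verifications are formal, exactly as in Proposition~\ref{prop:Hopfmod}.
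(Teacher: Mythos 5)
Your proof is correct and takes essentially the same route as the paper's: identify $(a,\delta)$ as the regular comodule via Example \ref{ex:comonoid_reg_comod}, get associativity of $\mu$ as a comodule morphism from Proposition \ref{prop:Hopfmod}, and recognise that $\eta$ is a comodule morphism exactly by the second diagram of Definition \ref{def:+0bimonoid}~(iii), with part (2) symmetric via the first diagram. You simply spell out what the paper leaves implicit, namely the coaction $(1\bullet \eta)\cdot \xi^2_0$ on the nullary product $i$ in the comodule category and the transfer of the unitality axiom along the faithful strict monoidal forgetful functor.
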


\begin{proof}
By Example \ref{ex:comonoid_reg_comod} $(a,\delta)$ is a comodule and by Proposition \ref{prop:Hopfmod} $\mu$ is a morphism of comodules.
Also $\eta$ is a morphism of comodules by the second diagram of Definition \ref{def:+0bimonoid}~(iii) which completes the proof of part (1).
Part (2) follows symmetrically, using now the first diagram of Definition \ref{def:+0bimonoid}~(iii).
\end{proof}

\begin{definition}
A {\em Hopf module} over a bimonoid $(a,(\delta,\varepsilon),(\mu,\eta))$ in an arbitrary $\mathsf{Lax}^+_0\mathsf{Oplax}_+^0$-duoidal category $(\mathsf D,\Lax,\OpLax,\xi)$ is defined by the following equivalent data.
\begin{itemize} 
\item[{(i)}] A module over the monoid $((a,\delta),\mu,\eta)$ in the category of comodules over the comonoid $(a,\delta,\varepsilon)$ in $(\mathsf D,\Lax)$ (cf. Proposition \ref{prop:a_monoid}~(1)).
\item[{(ii)}] A comodule over the comonoid $((a,\mu),\delta,\varepsilon)$ in the category of modules over the monoid $(a,\mu,\eta)$ in $(\mathsf D,\OpLax)$ (cf. Proposition \ref{prop:a_monoid}~(2)).
\item[{(iii)}]
\begin{itemize}
\item[$\bullet$] An object $x$ of $\mathsf D$,
\item[$\bullet$] a module $(x,\nu)$ over the monoid $(a,\mu,\eta)$ in $(\mathsf D,\OpLax)$,
\item[$\bullet$] a comodule $(x,\varrho)$ over the comonoid $(a,\delta,\varepsilon)$ in $(\mathsf D,\Lax)$,
\end{itemize}
rendering commutative the diagram of Definition \ref{def:Hopf_module}~(iii).
\end{itemize}
\end{definition}

\begin{example}
By Example \ref{ex:comonoid_reg_comod} and its dual counterpart, and by the compatibility diagram of Definition \ref{def:bimonoid}~(iii), $(a,\delta,\mu)$ is a Hopf module over an arbitrary bimonoid $(a,(\delta,\varepsilon),(\mu,\eta))$ in any $\mathsf{Lax}^+_0\mathsf{Oplax}_+^0$-duoidal category.
\end{example}

Our final task is a description of unital and counital \BiHom-bimonoids as bimonoids in a suitable $\mathsf{Lax}^+_0\mathsf{Oplax}^0_+$-duoidal category.
\black

\begin{theorem} \label{thm:DuoLax}
Associated to any symmetric monoidal category $(\mathsf V,\otimes,I,\sigma)$, there is a $\mathsf{Lax}^+_0\mathsf{Oplax}^0_+$-duoidal category $\mathsf D$ as follows.
\begin{itemize}
\item The \underline{objects} of the category $\mathsf D$ consist of an object $a$ of $\mathsf V$ and four pairwise commuting endomorphisms $\alpha,\beta,\kappa,\nu$ of $a$.

\noindent
The \underline{morphisms} in $\mathsf D$ are those morphisms in $\mathsf V$ which commute with all of the four endomorphisms of the source and target objects.
\item The $\mathsf{Lax}^+\mathsf{Oplax}^0$-monoidal structure is given by the same data as in $\mathsf L$ of Theorem \ref{thm:Lax}; in terms of the first two endomorphisms $\alpha,\beta$ at each object.
\item The $\mathsf{Lax}_0\mathsf{Oplax}_+$-monoidal structure is given by the same data as in $\mathsf R$ of Theorem \ref{thm:OpLax}; in terms of the last two endomorphisms $\kappa,\nu$ at each object.
\item For any non-negative integers $n,p$, the natural transformation $\xi^p_n$ is given by the unique component of the symmetry 
$\sigma_{pn}:\ 
\raisebox{-3pt}{$\stackrel {\displaystyle \otimes} {{}_n}$} \cdot 
(\raisebox{-3pt}{$\stackrel {\displaystyle \otimes} {{}_p}$} \cdots 
\raisebox{-3pt}{$\stackrel {\displaystyle \otimes} {{}_p}$})\cdot \tau_{np} \to 
\raisebox{-3pt}{$\stackrel {\displaystyle \otimes} {{}_p}$} \cdot 
(\raisebox{-3pt}{$\stackrel {\displaystyle \otimes} {{}_n}$} \cdots 
\raisebox{-3pt}{$\stackrel {\displaystyle \otimes} {{}_n}$})$.
\end{itemize}
\end{theorem}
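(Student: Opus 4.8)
The plan is to verify, in order: that the two proposed monoidal structures on $\mathsf D$ are well defined; that the cells $\xi^p_n$ are natural transformations in $\mathsf D$; and finally the compatibility axioms of Definition~\ref{def:+0duoidal}, reducing everything---exactly as in the proof of Theorem~\ref{thm:Lax}---to a comparison of exponents of the four commuting endomorphisms together with a comparison of underlying permutations.

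First I would observe that the underlying category of $\mathsf D$ maps faithfully onto the category $\mathsf L$ of Theorem~\ref{thm:Lax} by forgetting the pair $(\kappa,\nu)$, and onto the category $\mathsf R$ of Theorem~\ref{thm:OpLax} by forgetting $(\alpha,\beta)$. The coherence cells $\Phi,\phi,\iota$ are composites of monoidal products of copies of $\alpha$ and $\beta$; since $\alpha$ and $\beta$ commute with $\kappa$ and $\nu$ by hypothesis, these cells are morphisms in $\mathsf D$, and since the axioms of Definition~\ref{def:lax+oplax0} assert commutativity of diagrams of $\mathsf V$-morphisms that already hold for $\mathsf L$ with the very same underlying $\mathsf V$-morphisms, they hold for $\mathsf D$. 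Dually $\Psi,\psi,\upsilon$, being built from $\kappa$ and $\nu$, commute with $\alpha$ and $\beta$ and equip $\mathsf D$ with the asserted $\mathsf{Lax}_0\mathsf{Oplax}_+$-monoidal structure by Theorem~\ref{thm:OpLax}. Next, each component of $\xi^p_n=\sigma_{pn}$ carries no endomorphism content, so naturality of $\sigma$ applied to the four endomorphisms shows that $\sigma_{pn}$ intertwines the (suitably permuted) endomorphism parts of its source and target object, hence is a $\mathsf D$-morphism; naturality of $\xi^p_n$ in $\mathsf D$ is inherited from naturality of $\sigma$ in $\mathsf V$. Note that $\xi^1_n$ and $\xi^n_1$ are identities because $\tau_{n1}=1$ by \eqref{eq:tau}.

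For the compatibility I would use form~(iii) of Definition~\ref{def:+0duoidal}. The two triangles preceding Figure~\ref{fig:duoidal} reduce to trivial identities, since $\iota$ and $\upsilon$ are identities (both monoidal structures are strictly normal) and $\xi^1_n=\xi^n_1=1$. In the two large diagrams of Figure~\ref{fig:duoidal} every arrow is, at each object, a monoidal product over the individual tensor factors of an endomorphism that is a composite of copies of $\alpha,\beta$ (for the $\Phi$'s and $\phi$'s) or of copies of $\kappa,\nu$ (for the $\Psi$'s and $\psi$'s), pre- or post-composed with a symmetry isomorphism built from $\sigma$ (for the $\xi$'s and $\tau$'s). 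Using naturality of $\sigma$ to slide all endomorphism-valued factors past the symmetry isomorphisms, each diagram splits into an equality of the two underlying permutations---which holds by coherence of $\sigma$, concretely by the relations~\eqref{eq:tau}---and, for each tensor factor, an equality of the resulting exponents of $\alpha$, $\beta$, $\kappa$ and $\nu$. Because the $\Phi$'s and $\phi$'s involve only $\alpha,\beta$ and the $\Psi$'s and $\psi$'s only $\kappa,\nu$, the two diagrams decouple completely: in the first only $\alpha,\beta$-exponents are nonzero on either side, and the required identity is an instance of the exponent identities already checked in the proof of Theorem~\ref{thm:Lax} (equivalently, it is the statement that $\oplax n$ together with $\xi_n$ is a $\mathsf{Lax}^+\mathsf{Oplax}^0$-monoidal functor, which is Definition~\ref{def:+0duoidal}~(i)); in the second only $\kappa,\nu$-exponents are nonzero and the identity is the dual one from the proof of Theorem~\ref{thm:OpLax} (equivalently, Definition~\ref{def:+0duoidal}~(ii)). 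This establishes Definition~\ref{def:+0duoidal}~(iii), and hence the theorem.

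The main obstacle I anticipate is bookkeeping rather than genuine difficulty: one must fix the indexing so that the transposition functors $\tau$ occurring in Figure~\ref{fig:duoidal} interleave the double-indexed families of factors consistently, so that the permutations on the two sides of each square are literally the same map and the four exponents are compared at matching factors. Once the indexing is set up the exponent identities are precisely the ones already verified in Theorems~\ref{thm:Lax} and~\ref{thm:OpLax}, together with the coherence relations~\eqref{eq:tau}, so no new computation is needed; alternatively one can avoid repeating any of it by carrying out the verification through conditions~(i) and~(ii) of Definition~\ref{def:+0duoidal}---checking that $(\oplax n,\xi_n)$ is a $\mathsf{Lax}^+\mathsf{Oplax}^0$-monoidal functor and dually---which packages the bookkeeping without changing its content.
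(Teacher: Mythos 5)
Your proposal is correct and follows essentially the same route as the paper, whose proof simply records that an easy substitution using naturality of the symmetry $\sigma$ verifies the axioms of Definition~\ref{def:+0duoidal} and leaves the details to the reader; your plan of reducing each diagram of Figure~\ref{fig:duoidal} to a permutation comparison plus a factorwise comparison of exponents of $\alpha,\beta$ (resp.\ $\kappa,\nu$), reusing Theorems~\ref{thm:Lax} and~\ref{thm:OpLax}, is exactly that verification spelled out. One small precision: the equality of the underlying permutations rests on symmetric monoidal coherence for $\sigma$ in $\mathsf V$ (the relations~\eqref{eq:tau} only govern the $\mathsf{Cat}$-level flips identifying the source and target functors), but this does not affect the argument.
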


\begin{proof}
An easy substitution, using naturality of the symmetry $\sigma$, shows that the axioms of Definition \ref{def:+0duoidal} hold; we leave it to the reader.
\end{proof}

As an immediate consequence of Theorem \ref{thm:comon_lax+fun} and Theorem \ref{thm:mon_lax+fun} we obtain the following.

\begin{theorem}
For any symmetric monoidal category $(\mathsf V,\otimes,I,\sigma)$ the following categories are pairwise isomorphic.
\begin{itemize}
\item[{(1.i)}] The category of bisemigroups in the $\mathsf{Lax}^+\mathsf{Oplax}_+$-duoidal category $\mathsf D$ of Theorem \ref{thm:DuoLax}.
\item[{(1.ii)}] The category of \BiHom-bimonoids in $\mathsf V$.
\end{itemize}
and 
\begin{itemize}
\item[{(2.i)}] The category of bimonoids in the $\mathsf{Lax}^+_0\mathsf{Oplax}^0_+$-duoidal category $\mathsf D$ of Theorem \ref{thm:DuoLax}.
\item[{(2.ii)}] The category of unital and counital \BiHom-bimonoids in $\mathsf V$.
\end{itemize}
\end{theorem}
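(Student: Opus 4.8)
The plan is to deduce the statement from the identifications already in place, exactly as the sentence preceding the theorem suggests: \BiHom-bimonoids are, by construction, bisemigroups (respectively bimonoids) are \emph{defined} as cosemigroups/comonoids in the category of semigroups/monoids, and Theorems~\ref{thm:comon_lax+fun} and~\ref{thm:mon_lax+fun} translate the one-sided structures on $\mathsf D$ into the one-sided \BiHom-structures on $\mathsf V$. The work is then just to match the remaining compatibility diagram.

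First, for the isomorphism (1.i)$\cong$(1.ii): by Definition~\ref{def:bimonoid}~(iii) a bisemigroup in $\mathsf D$ is an object $a$ of $\mathsf D$ (hence an object of $\mathsf V$ with four pairwise commuting endomorphisms $\alpha,\beta,\kappa,\nu$) equipped with a semigroup $(a,\mu)$ in $(\mathsf D,\OpLax)$, a cosemigroup $(a,\delta)$ in $(\mathsf D,\Lax)$, and a single compatibility square built from $\xi^2_2$. By Theorem~\ref{thm:mon_lax+fun}~(1.i)--(1.ii) the datum $(a,\mu)$ is precisely a \BiHom-monoid in $\mathsf V$ with multiplicative endomorphisms $\kappa,\nu$; by Theorem~\ref{thm:comon_lax+fun}~(1.i)--(1.ii) the datum $(a,\delta)$ is precisely a \BiHom-comonoid with comultiplicative endomorphisms $\alpha,\beta$; and these four endomorphisms are exactly the commuting quadruple recorded in the object of $\mathsf D$. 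Since by Theorem~\ref{thm:DuoLax} the natural transformation $\xi^2_2$ is the relevant component of the symmetry $\sigma$, the compatibility square of Definition~\ref{def:bimonoid}~(iii) unravels — after discarding the omitted coherence isomorphisms of $\mathsf V$ — into the assertion that $\mu$ is a morphism of \BiHom-comonoids (equivalently $\delta$ a morphism of \BiHom-monoids), which is precisely the defining compatibility of a \BiHom-bimonoid recalled in the Introduction. On morphisms there is nothing to prove: a bisemigroup morphism is by definition a morphism of $\mathsf D$ that is simultaneously a semigroup and a cosemigroup morphism, and under the two cited theorems these are exactly the \BiHom-bimonoid morphisms.

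For (2.i)$\cong$(2.ii) one repeats the argument with the nullary structure switched on. By Definition~\ref{def:+0bimonoid}~(iii) a bimonoid in $\mathsf D$ adds to the bisemigroup data a unit $\eta\colon i\to a$ and a counit $\varepsilon\colon a\to j$ making $(a,\mu,\eta)$ a monoid in $(\mathsf D,\OpLax)$ and $(a,\delta,\varepsilon)$ a comonoid in $(\mathsf D,\Lax)$, subject to three further square/triangle diagrams involving $\xi^0_2$, $\xi^2_0$ and $\xi^0_0$. Here $i=j=(I,1,1,1)$ in $\mathsf D$ and $\xi^0_0=1_I$, so Theorems~\ref{thm:mon_lax+fun}~(2) and~\ref{thm:comon_lax+fun}~(2) identify the monoid and comonoid data with a unital \BiHom-monoid and a counital \BiHom-comonoid in $\mathsf V$; the bisemigroup compatibility is handled by part~(1); and the three extra diagrams, after substituting the relevant components of $\sigma$ for the $\xi$'s, say exactly that $\mu$ (respectively $\eta$) is a morphism of counital \BiHom-comonoids, i.e. they complete the list of axioms of a unital and counital \BiHom-bimonoid in the sense of \cite{GraMakMenPan}. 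Morphisms match automatically, again by the description of bimonoid morphisms in Definition~\ref{def:+0bimonoid} as the morphisms of $\mathsf D$ that are simultaneously comonoid and monoid morphisms.

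The only genuinely computational point — and where I would take care — is the verification that the abstract pentagon and squares, with $\xi$ replaced by symmetry components, reproduce on the nose the \BiHom-bimonoid compatibility conditions; this is a bookkeeping of the endomorphism exponents occurring in $\Phi_{2,1},\Phi_{1,2},\Psi_{2,1},\Psi_{1,2},\phi_{k_1,k_2}$ and $\psi_{k_1,k_2}$, strictly parallel to the exponent comparisons already performed in the proof of Theorem~\ref{thm:Lax}. Because the diagrams in $\mathsf V$ defining \BiHom-bimonoids are, by the very construction of $\mathsf D$ in Theorem~\ref{thm:DuoLax}, the images of the diagrams defining bisemigroups and bimonoids in $\mathsf D$ once the endomorphism bookkeeping is reinstated, this comparison is routine, and the theorem follows immediately as asserted.
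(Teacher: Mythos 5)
Your argument is correct and follows essentially the same route as the paper, which states this theorem as an immediate consequence of Theorem \ref{thm:comon_lax+fun} and Theorem \ref{thm:mon_lax+fun}: the (co)semigroup and (co)monoid parts are identified by those theorems, and the remaining compatibility diagrams of Definition \ref{def:bimonoid}~(iii) and Definition \ref{def:+0bimonoid}~(iii), with the $\xi$'s given by components of $\sigma$ and with $i=j=(I,1,1,1,1)$, unravel exactly into the statement that $\mu$ and $\eta$ are morphisms of (counital) \BiHom-comonoids. Apart from the harmless slip of writing the unit object of $\mathsf D$ with only three identity endomorphisms, nothing needs to be added.
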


Next we explain the Yau twist of (unital and counital) \BiHom-bimonoids in \cite[Proposition 5.9]{GraMakMenPan}. Analogously to the monoidal category $\mathsf M$ of Proposition \ref{prop:Yau}, we associate the following symmetric monoidal category $\mathsf S$ to any symmetric monoidal category $(\mathsf V,\otimes,I,\sigma)$. As a category, $\mathsf S$ is the same as $\mathsf D$ of Theorem \ref{thm:DuoLax}. Its monoidal product is $\raisebox{-3pt}{$\stackrel {\displaystyle \otimes} {{}_2}$}=\otimes$ of $\mathsf D$ and the monoidal unit is $\raisebox{-3pt}{$\stackrel {\displaystyle \otimes} {{}_0}$}=(I,1,1,1,1)$. Mac Lane's coherence natural isomorphisms are given by the same (omitted) morphisms as in $\mathsf V$ and the symmetry is given by $\sigma$.
A bisemigroup (respectively, bimonoid) in $\mathsf S$
is a quintuple consisting of a bisemigroup (respectively, bimonoid) in $(\mathsf V,\otimes,I,\sigma)$ together with four commuting bisemigroup (respectively, bimonoid) endomorphisms. Bisemigroup (respectively, bimonoid) morphisms are those bisemigroup (respectively, bimonoid) morphisms in $(\mathsf V,\otimes,I,\sigma)$ which are compatible with the endomorphism parts too.

\begin{proposition} \label{prop:YauBi}
Consider an arbitrary symmetric monoidal category $(\mathsf V,\otimes,I,\sigma)$, the corresponding $\mathsf{Lax}^+_0\mathsf{Oplax}^0_+$-duoidal category $\mathsf D$ of Theorem \ref{thm:DuoLax}, and the symmetric monoidal category $\mathsf S$ 
in the previous paragraph regarded as a $\mathsf{Lax}^+_0\mathsf{Oplax}^0_+$-duoidal category. The identity functor $\mathsf S \to \mathsf D$ is $\mathsf{Lax}^+_0\mathsf{Oplax}^0_+$-duoidal via the $\mathsf{Lax}^+\mathsf{Oplax}^0$-monoidal structure in Proposition \ref{prop:Yau} and the $\mathsf{Lax}_0\mathsf{Oplax}_+$-monoidal structure in Proposition \ref{prop:YauDual}. Consequently it preserves bisemigroups and bimonoids.
\end{proposition}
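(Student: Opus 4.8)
The plan is to verify the three clauses of Definition~\ref{def:Lax+0Oplax0+functor}~(iii) for the underlying identity functor, equipped with the $\mathsf{Lax}^+\mathsf{Oplax}^0$-monoidal structure $\Gamma^\bullet$ of Proposition~\ref{prop:Yau} (built from the first pair of endomorphisms $\alpha,\beta$) and the $\mathsf{Lax}_0\mathsf{Oplax}_+$-monoidal structure $\Gamma^\circ$ of Proposition~\ref{prop:YauDual} (built from the second pair $\kappa,\nu$). Two of the three clauses require no new work: the $\mathsf{Lax}^+\mathsf{Oplax}^0$-monoidal structure carried by $\mathsf D$ is, by construction, the structure of $\mathsf L$ of Theorem~\ref{thm:Lax} expressed through $\alpha,\beta$, and $\mathsf S$ carries the corresponding biased structure, so --- the endomorphisms $\kappa,\nu$ being inert here --- the computation proving Proposition~\ref{prop:Yau} shows verbatim that $(\mathrm{id},\Gamma^\bullet)$ is a $\mathsf{Lax}^+\mathsf{Oplax}^0$-monoidal functor $\mathsf S\to\mathsf D$, and symmetrically Proposition~\ref{prop:YauDual} shows that $(\mathrm{id},\Gamma^\circ)$ is a $\mathsf{Lax}_0\mathsf{Oplax}_+$-monoidal functor $\mathsf S\to\mathsf D$. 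In particular every unit and naturality condition occurring in Definition~\ref{def:lax2oplax0-functor} and its dual is already settled, and --- in contrast with the proof of Theorem~\ref{thm:Lax} --- no residual combinatorial identity of the $Z$/$\overline m$-type will have to be checked.

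What remains is the single compatibility diagram~\eqref{eq:Lax+Oplax+functor}, for all non-negative integers $n,p$. I would unwind it using that in both $\mathsf S$ and $\mathsf D$ one has $G=\mathrm{id}$, $\lax n=\oplax n=\raisebox{-3pt}{$\stackrel{\displaystyle\otimes}{{}_n}$}$, and $\xi^p_n=\sigma_{pn}$, the unique component of the symmetry. Evaluated at a family $\{(a_{ij},\alpha_{ij},\beta_{ij},\kappa_{ij},\nu_{ij})\}_{i=1,\dots,p;\,j=1,\dots,n}$, both legs of~\eqref{eq:Lax+Oplax+functor} become morphisms between two $pn$-fold tensor products of the $a_{ij}$ that differ by one and the same permutation of the tensor factors (the combinatorial flips $\tau$ being identities on objects, only reindexing the tuples). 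Using naturality of $\sigma$ to slide the instances of $\Gamma^\bullet$ and $\Gamma^\circ$ across that permutation, each leg gets rewritten as the common permutation isomorphism postcomposed with an endomorphism of $\bigotimes_{i,j}a_{ij}$ which is the tensor product, over the factors $a_{ij}$, of composites of powers of the four commuting endomorphisms $\alpha_{ij},\beta_{ij},\kappa_{ij},\nu_{ij}$ of $a_{ij}$. It thus suffices, for each fixed pair $i,j$, to compare on the two legs the exponents of these four.

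That comparison is the substantive step, and I expect it to be short once the indexing is pinned down. On the leg through the bottom of~\eqref{eq:Lax+Oplax+functor}, $\Gamma^\circ_p$ is applied first and puts $\kappa_{ij}^{\,p-i}\nu_{ij}^{\,i-1}$ on the factor $a_{ij}$ according to its outer index $i$, after which $\Gamma^\bullet_n$ puts $\alpha_{ij}^{\,n-j}\beta_{ij}^{\,j-1}$ on it according to the inner index $j$; on the leg through the top the two are applied in the opposite order, but the intervening symmetry $\sigma$ interchanges the roles of the outer and inner indices, so one is again led to the powers $\alpha_{ij}^{\,n-j}$, $\beta_{ij}^{\,j-1}$, $\kappa_{ij}^{\,p-i}$, $\nu_{ij}^{\,i-1}$. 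As $\Gamma^\bullet$ involves only $\alpha,\beta$ and $\Gamma^\circ$ only $\kappa,\nu$, and all four commute, the order of application is immaterial and the two legs agree. The only point demanding care when writing this out in full is the bookkeeping of precisely which factor of the transposed family a given $\Gamma$-component acts on --- I expect this, rather than any genuine difficulty, to be the main obstacle. This establishes~\eqref{eq:Lax+Oplax+functor}, hence the proposition.

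The final sentence is then formal: $\mathsf{Lax}^+_0\mathsf{Oplax}^0_+$-duoidal functors preserve bimonoids, and, such a functor being in particular a $\mathsf{Lax}^+\mathsf{Oplax}_+$-duoidal functor after forgetting the nullary parts, it preserves bisemigroups by Theorem~\ref{thm:cosemigroup_cat}.
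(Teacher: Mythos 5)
Your proposal is correct and follows the same route as the paper: the two monoidal structures are taken over from Propositions \ref{prop:Yau} and \ref{prop:YauDual}, and the only substantive check is the commutativity of \eqref{eq:Lax+Oplax+functor}, which the paper also settles (and in fact leaves to the reader) by naturality of $\sigma$ together with the commutation of the four endomorphisms; your exponent bookkeeping $\alpha_{ij}^{\,n-j}\beta_{ij}^{\,j-1}\kappa_{ij}^{\,p-i}\nu_{ij}^{\,i-1}$ on both legs is exactly that computation made explicit. The concluding preservation statements are derived as in the paper, so nothing is missing.
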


\begin{proof}
The easy check of the commutativity of \eqref{eq:Lax+Oplax+functor} for the stated data, using the naturality of $\sigma$, is left to the reader.
\end{proof}

\begin{remark}
In the category $\mathsf D$ of Theorem \ref{thm:DuoLax}, the full subcategory $\mathsf D^\times$ for whose objects $(a,\alpha,\beta,\kappa,\nu)$ all morphisms $\alpha,\beta,\kappa$ and $\nu$ are invertible, carries a duoidal structure. Its monoidal structures are as in Remark \ref{rem:invertible}, and the compatibility morphisms are either trivial or given by the symmetry $\sigma$ of $\mathsf V$.

The $\mathsf{Lax}^+_0\mathsf{Oplax}^0_+$-duoidal functor of 
Proposition \ref{prop:YauBi} restricts to a strong duoidal isomorphism between $\mathsf D^\times$ and the full symmetric monoidal subcategory $\mathsf S^\times$ of $\mathsf S$ in Proposition \ref{prop:YauBi} with the same objects.
This strong duoidal isomorphism $\mathsf D^\times \cong  \mathsf S^\times$ gives rise to an isomorphism of the categories of bisemigroups (respectively, bimonoids) in the isomorphic duoidal categories $\mathsf D^\times$ and $\mathsf S^\times$. This yields an isomorphism between those \BiHom-bialgebras (resp. unital and counital \BiHom-bialgebras) whose four endomorphisms are invertible; and 
the bisemigroups (resp. bimonoids) in $\mathsf V$ together with four commuting bisemigroup (resp. bimonoid) automorphisms.

This amounts to saying that any bimonoid in $\mathsf D^\times$ is a Yau twist of a bimonoid in $\mathsf V$. Moreover, for any bimonoid $((a,\alpha,\beta,\kappa,\nu),(\delta,\varepsilon), (\mu,\eta))$ in $\mathsf D^\times$; and the corresponding bimonoid 
$(a,
(\widetilde\mu=\mu \cdot (\kappa^{-1} \otimes \nu^{-1}),\eta),
(\widetilde \delta= (\alpha^{-1} \otimes \beta^{-1})\cdot \delta,\varepsilon)
)$, 
the following assertions are equivalent.
\begin{itemize}
\item[{(i)}] $(a,(\widetilde \delta,\varepsilon),(\widetilde\mu,\eta))$ is a Hopf monoid in $\mathsf V$. That is, there is a (unique) morphism $\chi:a\to a$ --- the `antipode' --- rendering commutative the following diagram.
$$
\xymatrix@C=35pt @R=15pt{
a \ar[r]^-{\widetilde \delta} \ar[dd]_-{\widetilde \delta} 
\ar[rd]_-{\varepsilon}&
a \otimes a \ar[r]^-{1\otimes \chi} &
a \otimes a \ar[dd]^-{\widetilde \mu} \\
& I \ar[rd]^-{\eta} \\
a\otimes a \ar[r]_-{\chi \otimes 1} &
a\otimes a \ar[r]_-{\widetilde \mu} &
a}
$$
\item[{(ii)}] The following `canonical morphism' (see \cite[(1.10)]{BohmChenZhang}) is invertible, for all objects $y$ and $a$-modules $(x,\varrho)$ in $\mathsf D^\times$.
$$
\xymatrix@C=35pt{
x\otimes y \otimes a \ar[r]^-{1\otimes 1 \otimes \delta} &
x\otimes y \otimes a \otimes a\ar[r]^-{1\otimes \sigma \otimes 1}&
x\otimes a \otimes y \otimes a \ar[r]^-{\varrho \otimes 1\otimes 1} &
x \otimes y \otimes a}
$$
\item[{(iii)}] $((a,\alpha,\beta,\kappa,\nu), (\mu,\eta),(\delta,\varepsilon))$ satisfies the axioms of \cite[Definition 6.9]{GraMakMenPan}. That is, there is a (unique) morphism $\chi:a\to a$ --- the `\BiHom-antipode' --- rendering commutative the following diagram.
$$
\xymatrix@C=15pt@R=15pt{
a \ar[rr]^-\delta \ar[dd]_-\delta \ar[rrrd]_-\varepsilon &&
a \otimes a \ar[rr]^-{1\otimes \chi} &&
a \otimes a \ar[rr]^-{\beta \nu \otimes \alpha \kappa} &&
a \otimes a \ar[dd]^-\mu \\
&&& I \ar[rrrd]^-\eta \\
a\otimes a \ar[rr]_-{\chi \otimes 1} &&
a\otimes a \ar[rr]_-{\beta \nu \otimes \alpha \kappa} &&
a\otimes a \ar[rr]_-\mu &&
a}
$$
\end{itemize}
The morphism $\chi$ occurring in parts (i) and (iii) is the same.
Although the assertions in parts (ii) and (iii) (but not in (i)) are meaningful for any bimonoid in the $\mathsf{Lax}^+_0\mathsf{Oplax}^0_+$-duoidal category $\mathsf D$ of Theorem \ref{thm:DuoLax}, there seems to be no reason to expect their equivalence any longer. 
\end{remark}


\bibliographystyle{plain}

\begin{thebibliography}{10}

\bibitem{AguiarMahajan}
Marcelo Aguiar and Swapneel Mahajan, 
{\em Monoidal functors, species and Hopf algebras,} 
volume 29 of CRM Monograph Series. American Mathematical Society, Providence, RI, 2010. 

\bibitem{BatCaeVer}
Eliezer Batista, Stefaan Caenepeel and Joost Vercruysse, 
{\em Hopf Categories,} 
Algebr. Represent. Theory 19 no. 5 (2016) 1173--1216.

\bibitem{BaezDolan}
John C. Baez and James Dolan,
{\em Higher-Dimensional Algebra III. n-Categories and the Algebra of Opetopes,}
Adv. in Math. 135 no. 2 (1998) 145--206.

\bibitem{Bohm}
Gabriella B\"ohm,
{\em Hopf polyads, Hopf categories and Hopf group monoids viewed as Hopf monads,} 
Theor. and Appl. of Categories 32 no. 37 (2017) 1229--1257.

\bibitem{BohmChenZhang}
Gabriella B\"ohm, Yuanyuan Chen and Liangyun Zhang,
{\em On Hopf monoids in duoidal categories,}
J. Algebra 394 (2013) 139--172.

\bibitem{BohmLack}
Gabriella B\"ohm and Stephen Lack,
{\em Hopf comonads on naturally Frobenius map-monoidales,}  
J. Pure Appl. Algebra 220 no. 6 (2016) 2177--2213.

\bibitem{Bruguieres}
Alain Brugui\`eres,
{\em Hopf Polyads,}
Alg. Represent. Theory 20 no. 5 (2017) 1151--1188.

\bibitem{CaenepeelGoyvaerts}
Stefaan Caenepeel and Isar Goyvaerts, 
{\em Monoidal Hom-Hopf algebras,} 
Comm. Algebra 39 (2011) 2216--2240.

\bibitem{DayStreet:lax}
Brian Day and Ross Street,
{\em Lax monoids, pseudo-operads, and convolution,}
in: ``Diagrammatic Morphisms and Applications"
David E. Radford, Fernando J. O. Souza and David N. Yetter (eds.)
pp. 75--96
Contemp. Math. 318, 2003

\bibitem{DayStreet}
Brian Day and Ross Street,
{\em Quantum categories, star autonomy, and quantum groupoids,} Fields Institute Comm. 43 (2004), 193-231.

\bibitem{GraMakMenPan}
Giacomo Grazianu, Abdenacer Makhlouf, Claudia Menini and Florin Panaite,
{\em \BiHom-Associative Algebras, \BiHom-Lie Algebras and \BiHom-Bialgebras,}
SIGMA 11 (2015), 086, 34 pages.

\bibitem{Leinster}
Tom Leinster,
{\em Higher Operads, Higher Categories,}
Cambridge University Press 2004.

\bibitem{Turaev}
Vladimir Turaev,
{\em Homotopy field theory in dimension 3 and crossed group-categories,}
preprint available at
\href{https://arxiv.org/abs/math/0005291}{https://arxiv.org/abs/math/0005291}.

\bibitem{ZhangWang:BiHom}
Xiaohui Zhang and Dingguo Wang,
{\em Cotwists of Bicomonads and BiHom-bialgebras,}
Alg. Represent. Theory (2019) — available online at
\href{https://link.springer.com/article/10.1007/s10468-019-09888-2}{https://link.springer.com/article/10.1007/s10468-019-09888-2}.
\end{thebibliography}

\end{document}